\documentclass[reqno,letterpaper,11pt]{amsart}
\usepackage[margin=1.25in,footskip=0.25in]{geometry}

\usepackage{hyperref}

\usepackage{amsmath}
\RequirePackage{amsthm}

\usepackage{amsfonts,amssymb,amsmath,latexsym,wasysym,mathrsfs,bbm,stmaryrd}
\usepackage[all]{xy}
\usepackage[usenames]{color}
\usepackage{epsfig}

\usepackage{graphicx}
\usepackage{subcaption}

\usepackage{amsthm}
\usepackage{thmtools}
\usepackage{thm-restate}
\declaretheorem[numberwithin=section]{theorem}
\declaretheorem[sibling=theorem]{proposition}
\declaretheorem[sibling=theorem]{definition}

\declaretheorem[sibling=theorem]{lemma}

\declaretheorem[sibling=theorem]{remark}

\numberwithin{equation}{section}

\usepackage[mathcal]{euscript}

\newcommand{\deq}{\mathrel{\mathop:}=}

\newcommand{\C}{\mathbb{C}}

\newcommand{\R}{\mathbb{R}}
\newcommand{\E}{\mathbb{E}}

\newcommand{\bbP}{\mathbb{P}}

\newcommand{\tr}{\operatorname{tr}}
\newcommand{\Tr}{\operatorname{Tr}}

\long\def\symbolfootnote[#1]#2{\begingroup%
	\def\thefootnote{\fnsymbol{footnote}}\footnote[#1]{#2}\endgroup}

\begin{document}

\title{Deformed single ring theorems}
\author{
Ching-Wei Ho and Ping Zhong
}

\date{\today}
\subjclass[2010]{60B20, 46L54}
\keywords{Non-Hermitian random matrices, single ring theorem, free probability, Brown measure}
\address{Ching-Wei Ho: Institute of Mathematics, Academia Sinica, Taipei 10617, Taiwan}
\email{chwho@gate.sinica.edu.tw}
\address{Ping Zhong: Department of Mathematics, University of Houston, Houston, 
TX 77004, U.S.A.}
\email{pzhong@central.uh.edu}

\maketitle

\begin{abstract}
    Given a sequence of deterministic matrices $A = A_N$ and a sequence of deterministic nonnegative matrices $\Sigma=\Sigma_N$ such that $A\to a$ and $\Sigma\to \sigma$ in $\ast$-distribution for some operators $a$ and $\sigma$ in a finite von Neumann algebra $\mathcal{A}$. Let $U =U_N$ and $V=V_N$ be independent Haar-distributed unitary matrices. We use free probability techniques to prove that, under mild assumptions, the empirical eigenvalue distribution of $U\Sigma V^*+A$ converges to the Brown measure of $T+a$, where $T\in\mathcal{A}$ is an $R$-diagonal operator freely independent from $a$ and $\vert T\vert$ has the same distribution as $\sigma$. The assumptions can be removed if $A$ is Hermitian or unitary. By putting $A= 0$, our result removes a regularity assumption in the single ring theorem by Guionnet, Krishnapur and Zeitouni. We also prove a local convergence on optimal scale, extending the local single ring theorem of Bao, Erd\H{o}s and Schnelli.
\end{abstract}

\tableofcontents

\section{Introduction}

\subsection{Sum of two random matrices}
The study of the eigenvalues of the sum of two matrices is an old problem. 
The Horn's problem \cite{Horn1962} studies the possible eigenvalues of the sum of two deterministic Hermitian matrices with prescribed eigenvalues. Horn's conjecture \cite{Horn1962} was solved by Knutson and Tao \cite{KnutsonTao1999}, and  Knutson, Tao and Woodward \cite{KnutsonTaoWoodward2004}. In random matrix theory, it is a fundamental question to study the limiting eigenvalue distribution of the sum of two random matrices, one of which satisfies some symmetry assumptions. In the case where the two random matrices are Hermitian, if the empircal eigenvalue distributions of the two random matrices converge to some probability measures $\mu$ and $\nu$ on $\R$, the limiting eigenvalue distribution of this sum is much more specific than the solutions of the Horn's problem.
In terms of the macroscopic distribution, the empirical eigenvalue distribution of the sum of the two Hermitian random matrices typically converges to the free convolution $\mu\boxplus\nu$ of $\mu$ and $\nu$ (see \cite{Voiculescu1991} and \cite[Chapter 4]{MingoSpeicherBook}).
In fact, the seminal work of Voiculescu \cite{Voiculescu1991} suggests that free probability theory is a suitable framework to study the
asymptotic spectrum of the sum of large independent Hermitian or non-Hermitian random matrices.

A well-studied non-Hermitian random matrix model is the single ring random matrix model defined as follows. Consider two sequences of independent Haar-distributed unitary matrices $U=U_N$ and $V=V_N$ and a sequence of deterministic nonnegative matrices $\Sigma = \Sigma_N$ such that the eigenvalue distribution of $\Sigma$ converges weakly to a certain probability measure on $\R$. Free probability theory again provides natural limit operators for $U_N\Sigma_N V_N^*$.  By Voiculescu's asymptotic freeness result \cite{Voiculescu1991} (see also \cite{CollinsMale2014}), the random matrices $\{ U_N \Sigma_N V_N^* \}_{N=1}^\infty$ converge in $*$-moments to an $R$-diagonal operator introduced by Nica--Speicher \cite{NicaSpeicher-Rdiag}. The Brown measure 
\cite{Brown1986} is an analogue of the eigenvalue distribution of matrices for operators. 
 The Brown measure of $R$-diagonal operators was calculated by Haagerup--Larsen \cite{HaagerupLarsen2000} and Haagerup--Schultz \cite{HaagerupSchultz2009} (see also \cite{BSS2018, Zhong2022Rdiag} for alternative proofs). The Brown measure of any $R$-diagonal operator is supported in an annulus. In the physics literature, Feinberg--Zee \cite{FeinbergZee1997a} proved the single ring theorem which states that the limiting eigenvalue distribution of the matrix $U \Sigma V^*$ converges to a certain rotation-invariant probability measure whose support is an annulus. Guionnet, Krishnapur and Zeitouni \cite{GuionnetKZ-single-ring} then proved the single ring theorem rigorously under some technical assumptions. A major assumption in \cite{GuionnetKZ-single-ring} is to estimate the least singular value of the non-Hermitian random matrix model, a notoriously difficult problem in non-Hermitan random matrix theory. In their seminal work \cite{RudelsonVershynin2014}, Rudelson and Vershynin solved this problem for the single ring random matrix model along with other results, and hence removed this major assumption in \cite{GuionnetKZ-single-ring} concerning the least singular value.

In this article, in addition to the matrices $U$, $V$, $\Sigma$ introduced previously, we consider another deterministic matrix $A=A_N$ such that given any polynomial $P$ in two noncommuting indeterminates, the limit
\[\lim_{N\to\infty}\tr[P(A, A^*)]\]
exists, where $\tr = (1/N)\Tr$ is the normalized trace on $N\times N$ matrices. We then prove that, under mild assumptions, the empirical eigenvalue distribution of the random matrix
\[Y=U\Sigma V^*+A\]
converges to the Brown measure of a sum of two free random variables, one of which is $R$-diagonal. We call this result a deformed single ring theorem.  

Let $(\mathcal{A},\tau)$ be a $W^*$-probability space where $\mathcal{A}$ is a finite von Neumann algebra equipped with a faithful, normal, tracial state $\tau$. In the random matrix model described in the preceding paragraph, the deterministic matrix $A$ and the random matrix $U\Sigma V^*$ are asymptotically free \cite[Theorem 9 on page 105]{MingoSpeicherBook}: there exist $a, T\in \mathcal{A}$ that are freely independent in the sense of Voiculescu \cite{Voiculescu1985} such that for any polynomial $P$ in four noncommuting indeterminates
\[\lim_{N\to\infty}\E\tr[P(A, A^*,U\Sigma V^*, (U\Sigma V^*)^*)] =\tau[P(a,a^*,T,T^*)].\]
In particular, $Y = U\Sigma V^*+A$ converges in $\ast$-distribution to $y = T+a$, where $T$ is an $R$-diagonal operator. The Brown measure of $y$ is a natural candidate of the limiting eigenvalue distribution of $Y$. However, since the Brown measure is not continuous with respect to $\ast$-moments, knowing that $Y$ converges to $y$ in $\ast$-distribution does not mean the empirical eigenvalue distribution of $Y$ converges to the Brown measure of $y$. The purpose of this paper is to prove that the limiting eigenvalue distribution of $Y$ is indeed the Brown measure of $y$.

There is another non-Hermitian random matrix model closely related to the model that we consider in this paper. Ginibre \cite{Ginibre1965} analyzed the limiting eigenvalue distribution of the normalized random matrix with independent Gaussian entries with zero mean and unit variance, now called the Ginibre matrix. After decades of development (for example, \cite{Bai1997,Bordenave-Chafai-circular}), Tao--Vu \cite{TaoVu2010-aop} showed that the limiting eigenvalue distribution of the Ginibre matrix remains unchanged if the Gaussian entries are replaced by i.i.d. random variables with arbitrary distributions with zero mean and unit variance. \'Sniady \cite{Sniady2002} studied the limiting eigenvalue distribution of the sum of a Ginibre matrix and a determinsitic matrix. \'Sniady identifies that the limiting eigenvalue distribution of the sum of an arbitrary matrix and the Ginibre matrix is the Brown measure of the sum of two freely independent operators; one of these two operators is a Voiculescu's circular operator, which is $R$-diagonal. While the random matrix model $Y$ being considered in this paper is different from the one \'Sniady considered, if the law of the $\Sigma$ in this paper converges to the quarter-circular law, the limiting eigenvalue distribution of $Y$ is the same as that of the model in \'Sniady's paper. The Tao--Vu's replacement principle \cite[Corollary 1.8]{TaoVu2010-aop} shows that the empirical eigenvalue distribution of the sum of a random matrix with i.i.d. entries and a deterministic matrix converges to the same Brown measure. This Brown measure has an explicit formula computed in \cite{BordenaveCC2014cpam, HoZhong2020Brown, Ho2022elliptic, Zhong2021Brown, BelinschiYinZhong2022brown}.

In \cite{ErdosSchleinYau2009a,ErdosSchleinYau2009b}, Erd\H{o}s, Schlein and Yau investigated the local behavior of the Wigner random matrices, called a local law; they looked at the number of eigenvalues in an interval of length of order $\log N/N$ inside the bulk. Kargin \cite{Kargin2015} and Bao--Erd\H{o}s--Schnelli \cite{BaoErdosSchnelli2017} proved a local law of the sum of two large Hermitian random matrices, one of which is conjugated by a Haar-distributed unitary matrix. In the non-Hermitian framework, Bourgade, Yau and Yin \cite{BourgadeYauYin2014,BourgadeYauYin2014b} and Yin \cite{Yin2014} proved a local law for the normalized non-Hermitian random matrix model with i.i.d. entries. A local law for the random matrix $U\Sigma V^*$ was establshed by Benaych-Georges \cite{BenaychGeorges2017}; this result was later improved by Bao--Erd\H{o}s--Schnelli \cite{BaoES2019singlering} to an optimal scale. They call this result a local single ring theorem. In this paper, we also prove a deformed local single ring thoerem, studying the local behavior of the eigenvalues of $Y = U\Sigma V^*+A$ in the bulk.

\subsection{The random matrix model and the main results}
We first introduce the random matrix model considered in this paper. Suppose that
\begin{enumerate}
	\item $U=U_N$ and $V = V_N$ are independent Haar-distributed unitary matrices;
	\item $\Sigma=\Sigma_N$ are deterministic nonnegative diagonal matrices such that the eigenvalue distribution of $\Sigma$ converges weakly to a probability measure on $\R$ that is not a Dirac delta measure at $0$;
	\item $A = A_N$ are deterministic matrices such that all the $\ast$-moments of $A$ converge;
	\item there is a constant $M>0$ independent of $N$ such that
		 \begin{equation}
			\label{eq:Mmeaning}
			\|\Sigma\|,\|A\|\leq M;
		 \end{equation}
\end{enumerate}
We emphasize that all the assumptions that we make for the random matrix $Y$ are on the \emph{deterministic} matrices $\Sigma$ and $A$. In this paper, we focus on the random matrix
\begin{equation}
	\label{eq:modelY}
	Y = Y_N = U \Sigma V^*+A.
\end{equation}

We then introduce the limiting object of $Y$. Let $(\mathcal{A},\tau)$ be a $W^*$-probability space and $a,\sigma\in\mathcal{A}$ such that $A$ converges to $a$ and $\Sigma$ converges to $\sigma$ in $\ast$-distribution. Let $T\in \mathcal{A}$ be an $R$-diagonal freely independent from $a$ such that the law of $\vert T\vert$ is the same as that of $\sigma$; hence, if $u\in\mathcal{A}$ is a Haar unitary operator freely independent from $\sigma$, we have $T = u\sigma$ in $\ast$-distribution \cite{NicaSpeicher-Rdiag}. Set
\begin{equation}
	\label{eq:limity}
	y=T+a.
\end{equation}
By \cite[Theorem 9 on Page 105]{MingoSpeicherBook}, $Y$ converges in $\ast$-distribution to $y$; that is, for any polynomial $P$ in two noncommuting indeterminates,
\[\lim_{N\to\infty} \E\tr[P(Y,Y^*)] = \tau[P(y,y^*)].\]

We are interested in the limiting eigenvalue distribution of $Y$. Since $y$ is the limit in $\ast$-distribution of $Y$, a natural candidate of the limiting eigenvalue distribution of $Y$ is the \emph{Brown measure} of $y$, which we will define shortly. Note, however, that although $y$ is the limit in $\ast$-distribution of $Y$, it is not automatic that the Brown measure of $y$ is the limiting eigenvalue distribution of $Y$; indeed, there are examples of matrices whose limiting eigenvalue distribution does not converge to the Brown measure of their limit in $\ast$-distribution (see Section~\ref{sect:Jordan}). We now define the Brown measure introduced by Brown \cite{Brown1986}.

\begin{definition}[\cite{Brown1986}]
	\label{def:BrownDef}
	Let $x\in\mathcal{A}$. The function $h(\lambda)=\tau[\log\vert x-\lambda\vert]$ defined for $\lambda\in\C$ is a subharmonic function on $\C$. The Brown measure $\mu_x$ of $x$ is defined to be 
	\[\mu_x = \frac{1}{2\pi}\Delta h\]
	where the Laplacian is taken in distributional sense. The function $h$ turns out to be the logarithmic potential of the Brown measure of $x$ in the sense that
	\begin{equation}
		\label{eq:LogPotx}
		\tau[\log\vert x-\lambda\vert] = \int_{\C}\log\vert z - \lambda \vert d\mu_x(z).
	\end{equation}
	For simplicity, we call the function $h$ the logarithmic potential of $x$.
\end{definition}

The definition of the Brown measure generalizes the eigenvalue distribution of a square matrix. If we replace $x$ in Definition~\ref{def:BrownDef} by an $N\times N$ matrix $X$ and the tracial state $\tau$ by the normalized trace $\tr=(1/N)\Tr$ of matrices, then by Section 11.2 of \cite{MingoSpeicherBook},
\begin{equation}
	\label{eq:LogPotXDef}
	\tr[\log\vert X-\lambda\vert] = \frac{1}{N}\sum_{k=1}^N \log \vert \lambda_k - \lambda\vert
\end{equation}
where $\lambda_k$ are the eigenvalues of $X$, and $(1/2\pi)\Delta_\lambda\tr[\log\vert X-\lambda\vert]$ is the eigenvalue distribution of $X$. The function defined in \eqref{eq:LogPotXDef} is called the logarithmic potential of the empirical eigenvalue distribution of $X$.

We introduce the main theorems of this paper. For a probability measure $\mu$ on $\R$, we denote
\begin{equation}
	\label{eq:CauchyTransDef}
	G_\mu(z) = \int\frac{1}{z-t}d\mu(t),\quad z\in\C^+
\end{equation}
to be the Cauchy transform of $\mu$. We also write $\widetilde\mu$ to be the \emph{symmetrization} of $\mu$ defined by
\[\widetilde\mu(B) = \frac{1}{2}[\mu(B)+\mu(-B)]\]
for all Borel set $B\subset \R$. Given a Hermitian matrix $X$ or a Hermitian operator $x$, we write the law of $X$ or $x$ as $\mu_X$ or $\mu_x$ respectively. There is no notational ambiguity; the law $\mu_X$ or $\mu_x$ are the Brown measure of $X$ or $x$ respectively.

The Brown measure of $T+a$ is calculated in a recent work by Bercovici and the second author \cite{BercoviciZhong2022Rdiag}. Denote
\begin{equation}
\Omega(T, a)=\{ \lambda\in\mathbb{C}:
||(a-\lambda)^{-1}||_2 ||T||_2 >1 \quad\text{and}\quad
||a-\lambda||_2 ||T^{-1}||_2>1 \}.
\end{equation}
The Brown measure of $T+a$ is supported in the closure of $\Omega(T,a)$; see Theorem \ref{thm:main-1-BZhong2022} for a review.

 The main results of this paper are the following two theorems. The first one, proved in Theorem~\ref{thm:SpecificA}, concerns the case when $A$ is Hermitian for all $N$ or $A$ is unitary for all $N$. By putting $A = 0$, the theorem recovers the single ring theorem by Guionnet et al. \cite{GuionnetKZ-single-ring} with weaker conditions, removing the regularity assumption that there exists $\kappa_1,\kappa_2>0$ such that
 \[\vert\operatorname{Im}G_\Sigma(z)\vert\leq \kappa_1,\quad \operatorname{Im}z>N^{-\kappa_2}.\] 
 Basak and Dembo \cite[Proposition 1.3]{BasakDembo2013} relaxed this assumption. In any compact subset of the support of the single ring law, this regularity assumption has been removed; see \cite[Remark 1.14]{BaoES2019singlering}.  Our result completely removes this regularity assumption; in particular, it applies to the simplest case where $A=0$ and the eigenvalue distribution of $\Sigma$ converges to $\delta_1$. This completely solves the question in \cite[Remark 2]{GuionnetKZ-single-ring}.

 \begin{theorem}
	 Let $Y$ and $y$ be as in \eqref{eq:modelY} and \eqref{eq:limity}. If $A$ is Hermitian for all $N$ or if $A$ is unitary for all $N$, then the empirical eigenvalue distribution of $Y$ converges weakly to the Brown measure of $y$ in probability. 
 \end{theorem}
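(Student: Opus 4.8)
The plan is to run Girko's Hermitization scheme. For $\lambda\in\C$, let $\nu^N_\lambda$ be the symmetrized empirical singular value distribution of $Y-\lambda$, that is, the empirical eigenvalue distribution of the $2N\times 2N$ Hermitian matrix
\[
H_\lambda=\begin{pmatrix}0 & Y-\lambda\\ (Y-\lambda)^{*} & 0\end{pmatrix},
\]
and let $\nu_\lambda$ be the symmetrized distribution of $\vert y-\lambda\vert$. By \eqref{eq:LogPotXDef} the logarithmic potential of the empirical eigenvalue distribution $\mu_Y$ of $Y$ at $\lambda$ equals $\int_{\R}\log\vert t\vert\,d\nu^N_\lambda(t)$, and by Definition~\ref{def:BrownDef} the logarithmic potential of $\mu_y$ at $\lambda$ equals $\int_{\R}\log\vert t\vert\,d\nu_\lambda(t)=\tau[\log\vert y-\lambda\vert]$. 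Hence, by the Hermitization lemma (in the form of Bordenave--Chafa\"i or Tao--Vu), $\mu_Y$ converges weakly in probability to $\mu_y$ as soon as, for Lebesgue-almost every $\lambda\in\C$: (a) $\nu^N_\lambda$ converges weakly in probability to $\nu_\lambda$; and (b) $\log\vert\cdot\vert$ is uniformly integrable in probability with respect to $(\nu^N_\lambda)_N$.

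Step (a) is the soft part. Since $U\Sigma V^{*}$ and $A$ are asymptotically free, $Y-\lambda$ converges in $\ast$-distribution to $y-\lambda$, so $\E\nu^N_\lambda$ converges weakly to $\nu_\lambda$; as $(U,V)\mapsto\int f\,d\nu^N_\lambda$ is Lipschitz for each fixed smooth $f$, concentration of measure on the unitary group upgrades this to convergence in probability. Unlike the Brown measure, symmetrized singular value distributions depend continuously on $\ast$-moments, which is why this step is routine.

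Step (b) is the crux, and the only place where the structure of $A$ enters. The bound $\Vert Y-\lambda\Vert\le 2M+\vert\lambda\vert$ from \eqref{eq:Mmeaning} makes the large-$t$ part of the uniform integrability trivial, so all the difficulty is near $t=0$: one must bound the smallest singular value of $Y-\lambda$ and, more precisely, the number of singular values of $Y-\lambda$ below a small threshold. By the Bercovici--Zhong formula (Theorem~\ref{thm:main-1-BZhong2022}), $\mu_y$ is supported in $\overline{\Omega(T,a)}$ with a bounded density, so for almost every $\lambda$ one has $\tau[\log\vert y-\lambda\vert]>-\infty$ and $\nu_\lambda(\{0\})=0$; for $\lambda\notin\overline{\Omega(T,a)}$, moreover, $0\notin\supp\nu_\lambda$, so $s_{\min}(Y-\lambda)$ is bounded below by a positive constant with high probability and (b) is immediate. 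The remaining range is $\lambda$ in the interior of $\Omega(T,a)$ (its boundary being Lebesgue-null), where $0$ lies in the bulk of $\nu_\lambda$. Combining (a) with $\nu_\lambda(\{0\})=0$ kills the contribution to $\int_{0}^{\delta}\log\vert t\vert\,d\nu^N_\lambda(t)$ of the singular values lying in a window $(N^{-c_0},\delta)$; to control the finitely many singular values below $N^{-c_0}$ one needs, for a.e.\ such $\lambda$, a polynomial lower bound $\bbP\big(s_{\min}(Y-\lambda)\le N^{-C}\big)\to 0$ together with a weak local law $\#\{j:s_j(Y-\lambda)\le t\}\le CNt^{c}$ valid down to polynomially small scales $t$.

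This hard-edge input is exactly what dictates the hypotheses of the general deformed single ring theorem, and it can be supplied unconditionally when $A$ is Hermitian or unitary. In either case $A$ is normal, so $A=P\,\mathrm{diag}(\alpha_1,\dots,\alpha_N)\,P^{*}$ for some unitary $P$; since $s_j(Y-\lambda)=s_j(P^{*}(Y-\lambda)P)$ and
\[
P^{*}(Y-\lambda)P=U'\Sigma V'^{*}+\mathrm{diag}(\alpha_1-\lambda,\dots,\alpha_N-\lambda),\qquad U'=P^{*}U,\ V'=P^{*}V,
\]
with $U',V'$ still independent and Haar-distributed, the matrix $H_\lambda$ is, up to conjugation by a deterministic unitary, the Hermitization of a single-ring-type matrix with a diagonal additive perturbation. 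Its behaviour near $0$ — in particular the smallest-singular-value estimate and the weak local law above — can then be obtained by the techniques developed for the (deformed) single ring theorem, along the lines of Rudelson--Vershynin and Bao--Erd\H{o}s--Schnelli, without the regularity assumption of Guionnet--Krishnapur--Zeitouni; for $A=0$ this reduction is the identity and one recovers the single ring theorem in the stated generality. Feeding (a) and (b) into the Hermitization lemma then gives the asserted weak convergence in probability of $\mu_Y$ to $\mu_y$. The main obstacle is the hard-edge analysis of $H_\lambda$ — establishing (b) with enough uniformity in $\lambda$ and in the realization of $(U,V)$ — which carries essentially all of the work; step (a) and the bookkeeping around the Hermitization lemma are standard.
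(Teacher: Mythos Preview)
Your overall framework is correct and essentially the same as the paper's: Hermitization, split the logarithmic potential, and reduce everything to a smallest-singular-value input and a local law near zero for $H_\lambda$. However, two points in your argument are genuine gaps rather than routine details.

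First, the claim that for $\lambda\notin\overline{\Omega(T,a)}$ one has ``$s_{\min}(Y-\lambda)$ bounded below by a positive constant with high probability'' from $0\notin\supp\nu_\lambda$ alone is false as stated. Weak convergence $\nu^N_\lambda\to\nu_\lambda$ controls the proportion of small singular values, not the smallest one; a single outlier of size $e^{-N}$ is invisible to weak convergence but destroys uniform integrability of $\log|\cdot|$. The paper needs, and uses, the Rudelson--Vershynin type bound (Theorem~\ref{thm:lsv}) equally for $\lambda$ outside $\overline{\Omega}$ and inside. There is no ``immediate'' case.

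Second, and more importantly, your diagonalization $P^*(Y-\lambda)P=U'\Sigma V'^*+\mathrm{diag}(\alpha_j-\lambda)$ is correct but you do not extract from it the one fact that actually does the work. The point is not that the perturbation becomes diagonal; it is that when $A$ is Hermitian (respectively unitary) its spectrum sits inside $\R$ (respectively $\T$), a Lebesgue-null subset of $\C$. Hence for Lebesgue-a.e.\ $\lambda$, the matrix $A-\lambda$ is invertible with $\Vert(A-\lambda)^{-1}\Vert$ bounded uniformly on compacts disjoint from that set. Writing
\[
s_{\min}(Y-\lambda)=s_{\min}\bigl(U^*(A-\lambda)+\Sigma V^*\bigr),
\]
as in \eqref{eq:sminU}, one can then apply Theorem~\ref{thm:lsv} with $\Gamma=A-\lambda$ (not $\Sigma$), obtaining $\bbP(s_{\min}(Y-\lambda)\le t)\le t^c N^{c'}$ without any invertibility assumption on $\Sigma$. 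This is precisely why the Hermitian/unitary hypothesis removes the condition $\Vert\Sigma^{-1}\Vert\le N^\alpha$ of Theorem~\ref{thm:deformedsinglering}: it provides an alternative invertible factor. Your proposal never isolates this mechanism, instead gesturing toward ``techniques of Rudelson--Vershynin and Bao--Erd\H{o}s--Schnelli'' applied to the diagonalized model; but Rudelson--Vershynin gives nothing unless you first produce an invertible $\Gamma$ in $U\Gamma+D$, and that is exactly the step that the null-spectrum observation supplies.

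Once this is in hand, the rest of your outline matches the paper: excise a small-measure neighbourhood of the spectrum of $A$ (and of $S(T,a)$ and of $\partial\Omega$) using uniform local integrability of logarithmic potentials, and on the remaining compact pieces combine the least-singular-value bound with the local law (Theorem~\ref{thm:HlambdaCauchy}) to get uniform integrability of $\log|\cdot|$.
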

 
The following theorem, proved in Theorem~\ref{thm:deformedsinglering}, does not assume that $A$ is Hermitian or unitary. We put some relatively mild technical assumptions on $A$ and $\Sigma$. Assumption~(\ref{Assumption1}) in the following theorem is satisfied if $A$ is Hermitian for all $N$ or $A$ is unitary for all $N$; this assumption means the singular values of $A-\lambda$ do not concentrate around $0$ for most $\lambda$. 

\begin{theorem}
	\label{thm:maintheorem}
	Let $Y$ and $y$ be as in \eqref{eq:modelY} and \eqref{eq:limity}. Suppose that $\Sigma$ is invertible and $\|\Sigma^{-1}\|\leq N^{\alpha}$ for some $\alpha>0$ independent of $N$. Assume further that at least one of the following is true:
	\begin{enumerate}
		\item \label{Assumption1} there exists a Lebesgue measurable set $E\subset \Omega(T,a)^c$ that has Lebesgue measure $0$ with the following property: for any compact set $S\subset\Omega(T,a)^c\cap E^c$, there exist constants $\kappa_1, \kappa_2>0$ such that 
		 \begin{equation}
		   \label{eqn:condition1a}
		   \left\vert G_{\widetilde\mu_{\vert A-\lambda\vert}}(i\eta)\right\vert\leq\kappa_2
		 \end{equation}
		for all $\eta > N^{-\kappa_1}$ and $\lambda\in S$; or
		\item  there exist constants $\kappa_1, \kappa_2>0$ such that 
		\begin{equation}
		 \label{eqn:condition1b}
		 \left\vert G_{\widetilde{\mu}_{\Sigma}}(i\eta)\right\vert\leq\kappa_2
		\end{equation}
		for all $\eta > N^{-\kappa_1}$.
	\end{enumerate}
	Then the empirical eigenvalue distribution of $Y$ converges weakly to the Brown measure of $y$ in probability.
\end{theorem}

\begin{remark} [Discussions on methodologies]
The Hermitian reduction method is a well-known approach in non-Hermitian random matrix theory (see \cite{Bordenave-Chafai-circular, GuionnetKZ-single-ring, TaoVu2010-aop} and references therein). In this approach, a major difficulty is to estimate the least singular value. We apply the main result of Rudelson-Vershynin \cite{RudelsonVershynin2014} to get a least singular value estimate. On the technical level, the proof strategies for convergence of the deformed single ring model follow closely with preceeding works on the single ring model \cite{GuionnetKZ-single-ring}, \cite{BenaychGeorges2017} and \cite{BaoES2019singlering}. The recent results in free probability \cite{BercoviciZhong2022Rdiag, BelinschiBercoviciHo2022} provide additional tools and guide us to extend arguments for the single ring model to the deformed model in this paper. 
\end{remark}

\begin{remark}
Assumptions similar to Condition \eqref{eqn:condition1a} and Condition \eqref{eqn:condition1b} appear in the paper \cite{GuionnetKZ-single-ring} on the single ring matrix model. We briefly explain these two conditions in the next paragraph.

Condition \eqref{eqn:condition1a} quantifies the behavior of the (small) singular values of $A-\lambda$ for $\lambda\not\in(\overline{\Omega(T,a)})^c$ in terms of its Cauchy transform on the positive imaginary axis. This condition is inspired by the behavior of the limiting operator $a-\lambda$ as follows. From \cite[Lemma 4.8]{BercoviciZhong2022Rdiag}, it is known that the support of $\mu_a$ is contained in the closure of $\Omega(T,a)$. In general, $\text{supp}(\mu_a)\subset\text{spec}(a)$ (see \cite{HaagerupLarsen2000, HaagerupSchultz2007}). There are examples that $\text{supp}(\mu_a)=\text{spec}(a)$, where one can deduce that for any compact $S\subset (\overline{ \Omega(T,a)})^c$, $0\notin \text{supp}(\mu_{\vert a-\lambda\vert})$ for any $\lambda\in S$. In these examples, $\vert G_{\widetilde\mu_{\vert a-\lambda\vert}}(i\eta)\vert$ is uniformly bounded for $\lambda\in S$. In the progress of proving the eigenvalue convergence, Condition \eqref{eqn:condition1a} controls the Cauchy transform of $A-\lambda$ in a way similar to the preceding discussion about $a-\lambda$; it gives a uniform bound for $\vert G_{\widetilde\mu_{\vert A-\lambda\vert}}(i\eta)\vert$ for $\lambda\in S$ and $\eta>N^{-\kappa_1}$. We emphasize that, Condition \eqref{eqn:condition1a} is a condition on the random matrix $A$ but not the limiting operator $a$. We are not making the assumption that $\text{supp}(\mu_a)=\text{spec}(a)$. Similarly, Condition \eqref{eqn:condition1b} quantifies the invertibility (the small eigenvalues) of the matrix $\Sigma$. 
\end{remark}

\begin{remark}
	In \cite{GuionnetKZ-single-ring} and \cite{BaoErdosSchnelli2017}, the authors consider another random matrix model $O_1 \Sigma O_2$ where $\Sigma$ is a nonnegative matrix, $O_1$ and $O_2$ are independent Haar orthogonal matrices. It is thus a natural question to ask whether we can replace the Haar unitary matrices $U$ and $V$ in \eqref{eq:modelY} by Haar orthogonal matrices. In the present paper, we use results by Benaych-Georges \cite{BenaychGeorges2017} on approximate subordination functions, which are only proved in the case of Haar unitary matrices. We believe the conclusion of Theorem~\ref{thm:maintheorem} holds in the case of Haar orthogonal matrices; however, we do not prove this case in our paper.
\end{remark}

The empirical eigenvalue distribution of $Y$ is the distributional Laplacian of \[(1/2\pi)\tr[\log\vert Y-\lambda\vert]\] with respect to $\lambda$. To show the weak convergence of the empirical eigenvalue distribution of $Y$ to the Brown measure of $y$, we need to show that, for any test function $f\in C_c^\infty(\C)$, 
\[\int \Delta f(\lambda)\tr[\log\vert Y-\lambda\vert]d^2\lambda \to \int\Delta f(\lambda)\tau[\log\vert y-\lambda\vert]d^2\lambda\] 
in probability. Note that $\tr[\log\vert Y-\lambda\vert]$ is the average of the logarithm of the singular values of $Y-\lambda$. Since the logarithm is unbounded around $0$, we need to estimate the least singular value of $Y-\lambda$ for all $\lambda\in\C$ in order to control $\tr[\log\vert Y-\lambda\vert]$. In all of our applications, the least singular value of $Y-\lambda$ can be estimated using Theorem~\ref{thm:lsv}.

We denote
\begin{equation}\nonumber
\begin{aligned}
D(T,a)=\{ \lambda\in\mathbb{C}: 0<f_{|T+a-\lambda|}(0)<\infty \},
	\end{aligned}
	\end{equation}
	where $f_{|T+a-z|}$ denotes the density function of the absolutely continuous part of 
	$|T+a-z|$. One can show that $D(T,a)\subset \Omega(T,a)$. See Section \ref{section:2.2.BrownMeasureLimit} for an alternative definition of $D(T,a)$. 
 The following is a simplified version of Theorem \ref{thm:localconv}, which also states the rate of the convergence. This is a deformed local single ring theorem in Theorem~\ref{thm:localconv}, which concerns the local behavior of the eigenvalues of $Y$ in the bulk $D(T,a)$. 
\begin{theorem}
	Let $Y$ and $y$ be as in \eqref{eq:modelY} and \eqref{eq:limity}. Suppose that $\Sigma$ is invertible and $\|\Sigma^{-1}\|\leq N^{\alpha}$ for some $\alpha>0$ independent of $N$. Let $a_N\in \mathcal{A}$ be such that $a_N$ has the \emph{same} $\ast$-distribution as $A_N$. Also let $T_N\in \mathcal{A}$ be an $R$-diagonal operator such that $\mu_{\vert T_N\vert} = \mu_{\Sigma_N}$ and $T_N$ is freely independent from $a_N$. Write
	\[y_N = a_N+T_N.\]
	For any compact set $K\subset D(T,a)$, $\alpha\in (0,1/2)$, $w_0\in K$ and any smooth function $f:\C\to\R$ supported in the disk centered at $0$ of radius $R$, we define a function $f_{w_0}$ depending on $N$ by
	\[f_{w_0}(w) = N^{2\alpha} f(N^\alpha(w-w_0)).\]
	Denote by $\lambda_1,\ldots,\lambda_N$ the eigenvalues of $Y$. We have 
	\[
		\frac{1}{N}\sum_{k=1}^N f_{w_0}(\lambda_k) - \int_\C f_{w_0}(w)\,d\mu_{y_N}(w)\to 0
	\]
	in probability, where $\mu_{y_N}$ is the Brown measure of $y_N$. This convergence is uniform in $f$ with $\|\Delta f\|_{L^1(\C)}\leq 1$ and in $w_0\in K$, for all large enough $N$ depending on $K$, $R$, $M$, $a$ and $T$. (The constant $M$ is defined in \eqref{eq:Mmeaning}.)
\end{theorem}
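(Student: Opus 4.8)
The plan is to follow the Hermitization strategy at the local scale. For $w_0 \in K \subset D(T,a)$ and the rescaled test function $f_{w_0}(w) = N^{2\alpha} f(N^\alpha(w-w_0))$, we write both the empirical counting functional and the Brown-measure integral as logarithmic potentials via the distributional Laplacian. Concretely, using $\Delta_w \log|w| = 2\pi \delta_0$ and integration by parts,
\[
\frac{1}{N}\sum_{k=1}^N f_{w_0}(\lambda_k) - \int_\C f_{w_0}\,d\mu_{y_N}
= \frac{1}{2\pi}\int_\C \Delta f_{w_0}(\lambda)\Bigl(\tr[\log|Y-\lambda|] - \tau[\log|y_N-\lambda|]\Bigr)\,d^2\lambda.
\]
After the change of variables $\lambda = w_0 + N^{-\alpha}\zeta$, the Laplacian $\Delta f_{w_0}$ becomes $N^{4\alpha}(\Delta f)(\zeta)$ while $d^2\lambda$ contributes $N^{-2\alpha}d^2\zeta$, so the problem reduces to showing that $N^{2\alpha}\int (\Delta f)(\zeta)\bigl(\tr[\log|Y - w_0 - N^{-\alpha}\zeta|] - \tau[\log|y_N - w_0 - N^{-\alpha}\zeta|]\bigr)d^2\zeta \to 0$ in probability, uniformly in $f$ and in $w_0 \in K$. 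The difference of logarithmic potentials is, up to the factor $1/2$, the difference of the integrals of $\log t$ against the symmetrized singular value distributions $\nu_{Y-\lambda}$ of $Y-\lambda$ and $\nu_{y_N-\lambda}$ of $y_N - \lambda$, i.e.\ of the spectral distributions of the Hermitized matrices $\begin{pmatrix} 0 & Y-\lambda \\ (Y-\lambda)^* & 0\end{pmatrix}$ and its limiting operator.

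The key steps, in order. \textbf{Step 1 (macroscopic input at each $\lambda$).} For $\lambda$ in a fixed compact subset of $D(T,a)$, invoke the machinery behind Theorem~\ref{thm:deformedsinglering} — in particular the local law for the Hermitized resolvent of $U\Sigma V^* + A$ in the spirit of Benaych-Georges and Bao--Erd\H{o}s--Schnelli — to obtain that the Stieltjes transform of $\nu_{Y-\lambda}$ is close to that of $\nu_{y_N-\lambda}$ down to scale $\eta \gtrsim N^{-1+\varepsilon}$ (the optimal scale), with high-probability error bounds that are uniform for $\lambda$ in the compact set. \textbf{Step 2 (control of small singular values).} Apply Theorem~\ref{thm:lsv} to bound the least singular value of $Y-\lambda$ from below by $N^{-C}$ with overwhelming probability, uniformly in $\lambda$; combined with the trivial operator-norm upper bound $\|Y-\lambda\| \le M + \|w_0\| + N^{-\alpha}R$, this confines $\supp \nu_{Y-\lambda}$ to $[N^{-C}, C']$ and makes $\log t$ effectively bounded there. \textbf{Step 3 (from Stieltjes closeness to $\log$-integral closeness).} Integrate the Stieltjes-transform bound of Step~1 against $\log t$ over the relevant window; the standard argument splits the integral at scale $N^{-1+\varepsilon}$, handles the region $t \ge N^{-1+\varepsilon}$ by the local law, and handles $t < N^{-1+\varepsilon}$ using Step~2 together with a rigidity/counting estimate showing $\nu_{Y-\lambda}([0,s])$ is at most $O(s + N^{-1+\varepsilon})$. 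This yields $\bigl|\tr[\log|Y-\lambda|] - \tau[\log|y_N-\lambda|]\bigr| = o(N^{-2\alpha})$ — indeed polynomially small — with high probability, uniformly in $\lambda$ on the compact set. \textbf{Step 4 (integration over $\zeta$ and uniformity in $w_0$).} Since $\Delta f$ is supported in the disk of radius $R$ and bounded by a constant depending only on $f$ through finitely many derivatives, integrate the pointwise-in-$\lambda$ bound over $\zeta$, multiply by $N^{2\alpha}$, and use a union bound (or a net argument in $\lambda$ exploiting Lipschitz continuity of $\lambda \mapsto \tr[\log|Y-\lambda|]$ away from eigenvalues, controlled again by Theorem~\ref{thm:lsv}) to make the estimate simultaneously valid for all $w_0 \in K$ and all admissible $f$.

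The main obstacle is \textbf{Step 3 at the smallest scales}: near $t = 0$ the local law alone does not control $\int \log t\, d\nu_{Y-\lambda}(t)$, and one must rule out an anomalous pile-up of singular values of $Y-\lambda$ just above $N^{-C}$. This requires the least-singular-value estimate of Theorem~\ref{thm:lsv} not just at a single $\lambda$ but with enough uniformity in $\lambda$ to survive the integration against $\Delta f_{w_0}$, and it requires a matching upper bound on the small-singular-value counting function $\nu_{Y-\lambda}([0,s])$ at the optimal scale, which is precisely the content of the local single ring theorem of Bao--Erd\H{o}s--Schnelli transported to the deformed setting; checking that the deformation by $A$ does not destroy this — in particular that the relevant local law holds uniformly for $\lambda$ ranging over a compact subset of $D(T,a)$ — is where the bulk of the work lies. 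The factor $N^{2\alpha}$ with $\alpha < 1/2$ is exactly what the polynomially small (rather than merely $o(1)$) error from Steps~1--3 can absorb, so the regime $\alpha \in (0,1/2)$ is sharp for this method and no further idea is needed there.
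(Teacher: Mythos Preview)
Your proposal is correct in its essential architecture --- Hermitization, a local law for the Stieltjes transform uniform in $\lambda$ over the bulk $D^{(\varepsilon)}(T,a)$, and the least-singular-value estimate of Theorem~\ref{thm:lsv} --- and these are precisely the ingredients the paper uses. The paper's execution, however, is organized differently and avoids two of the complications you flag.

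First, the paper exploits the Tao--Vu identity
\[
\tfrac{1}{2N}\Tr\log|H^\lambda| = \tfrac{1}{2N}\Tr\log|H^\lambda - iN^L| + \operatorname{Im}\int_0^{N^L} G^\lambda(i\eta)\,d\eta,
\]
and the analogous identity for the limit, reducing everything to integrals of Cauchy transforms along the positive imaginary axis. Second, because the Bao--Erd\H{o}s--Schnelli local law (Theorem~\ref{thm:main-lemma-Bao-etal}) holds in the bulk for \emph{all} $\eta>0$ with error $\prec 1/(N\eta)$, the paper splits the $\eta$-integral at a \emph{very small} scale $N^{-L_1}$ with $L_1$ as large as needed --- not at the optimal scale $N^{-1+\varepsilon}$. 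The window $[N^{-L_1},N^L]$ then contributes $\prec \log N/N$ directly, with no rigidity or counting-function input required. Third, for the tiny remainder $[0,N^{-L_1}]$ the paper bounds $|G^\lambda(i\eta)|\le \eta/((s_{\min}^\lambda)^2+\eta^2)$ crudely, integrates against $|\Delta f|$ over $\lambda$ \emph{first}, takes expectation, and applies Markov's inequality together with Theorem~\ref{thm:lsv} (this is Lemma~\ref{lem:singularvalueG}). This sidesteps both your pile-up concern and the need for a pointwise-in-$\lambda$ high-probability bound with a net argument: the integration against $\Delta f$ before passing to probability means Fubini and a single application of Markov suffice, and no Lipschitz control of $\lambda\mapsto\tr[\log|Y-\lambda|]$ is needed. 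Your route would work, but the paper's is shorter because the local law is valid far below optimal scale and the remainder is handled in expectation rather than pathwise.
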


\section{Preliminaries and auxiliary results}

\subsection{Free additive convolution}
Given a probability measure $\mu$ on $\mathbb{R}$, its Cauchy transform $G_\mu$ is defined on the complex upper half-plane $\mathbb{C}^+$ as in \eqref{eq:CauchyTransDef}. Let $F_\mu: \mathbb{C}^+\rightarrow \mathbb{C}^+$ be an analytic map defined as
\[
F_\mu(z)=\frac{1}{G_\mu(z)}, \qquad z\in\mathbb{C}^+.
\] 
Given two probability measures $\mu_1, \mu_2$ on $\mathbb{R}$, it is known that there exist a pair of analytic functions $\omega_1, \omega_2: \mathbb{C}^+\rightarrow\mathbb{C}^+$, such that, for all $z\in\mathbb{C}^+$, we have 
\begin{equation} 
\label{eqn:subordination-scalar}
F_{\mu_1\boxplus\mu_2}(z)=F_{\mu_1}(\omega_1(z))=F_{\mu_2}(\omega_2(z))
=\omega_1(z)+\omega_2(z)-z. 
\end{equation}
The free additive convolution obeys various regularity properties. Suppose neither of $\mu_1,\mu_2$ is a point mass. Then $\mu_1\boxplus\mu_2$ has no singular continuous part, and the absolutely continuous part of $\mu_1\boxplus \mu_2$ is always nonzero, and its density is analytic whenever positive and finite. 
See \cite{Belinschi2008, survey-2013, BercoviciVoiculescu1998} for more details.

Recall that the symmetrization $\widetilde\mu$ of a probability measure $\mu$ on $\R$ is defined by 
\[
 \widetilde{\mu}(B)\deq\frac{1}{2}\big[ \mu(B) + \mu(-B)\big]
\] 
for any Borel set $B\subset\R$. Set
\begin{align}
\label{defn:mu-sigma-xi}
\mu_{\sigma,\xi}\deq\widetilde{\mu}_{\sigma}\boxplus  \widetilde{\mu}_{\xi}, 
\end{align}
where $\boxplus$ denotes the free additive convolution of probability measures on $\R$. For any symmetric probability measure $\mu$ on $\mathbb{R}$, observe that, by the symmetry, 
\[
    G_\mu(i\eta)=\int_\mathbb{R}\frac{1}{i\eta-x}d\mu(x)=-i\eta\int_\mathbb{R}\frac{1}{\eta^2+x^2}d\mu(x), \quad\eta>0.
\]
Hence, $F_\mu(i\eta)\in i(0,\infty)$. 
We can deduce that the subordination functions $\omega_1, \omega_2$ corresponding to the free convolution of two symmetric probability measures $\widetilde{\mu}_\sigma,\widetilde{\mu}_\xi$ satisfy
\[\omega_1(i\eta), \omega_2(i\eta)\in i(0,\infty)\]
for all $\eta>0$. See \cite[Proposition 3.1]{BercoviciZhong2022Rdiag}. 

\subsection{The Brown measure and limit distribution}
\label{section:2.2.BrownMeasureLimit}
Recall that the pair $(\mathcal{A}, \tau)$ denotes a $W^*$-probability space. 
Given a sequence of random matrices $\{X_N\}$, we say that $X_N$ converges in $\ast$-distribution (or $\ast$-moments) to $x\in(\mathcal{A},\tau)$ if
\[
\lim_{N\rightarrow \infty} \E\tr(P(X_N, X_N^*))=\tau( P(x, x^*)), 
\]
for any polynomial $P$ in two noncommuting indeterminates with complex coefficients, where $\tr$ means the normalized trace $(1/N)\Tr$.

Recall that $U=U_N$ and $V=V_N$ are two independent Haar random unitary matrices, and $\Sigma=\Sigma_N$ is a sequence of $N\times N$ deterministic nonnegative definite diagonal matrices, and $A=A_N$ is a sequence of $N\times N$ deterministic matrices. Denote the singular values of $\Sigma$ by $\sigma_1, \cdots,\sigma_N$ (which are also the eigenvalues of $\Sigma$), and the empirical distribution of the singular values of $\Sigma$ by 
\[
\mu_\Sigma =\frac{1}{N}\sum_{i=1}^N\delta_{\sigma_i}.
\]
We assume there is a nonnegative operator $\sigma\in\mathcal{A}$ such that
\[
\mu_\Sigma\rightarrow \mu_\sigma.
\]
Recall that $A$ and $U\Sigma V^*$ converge in $*$-moments to $a$ and $T$ respectively, where $T$ is $R$-diagonal and $a$ is freely independent from $T$. 

By \eqref{eqn:subordination-scalar}, the free convolution of $\mu_1$ and $\mu_2$, where
\[\mu_1=\widetilde{\mu}_{|a-\lambda|} \textrm{ and } \mu_2=\widetilde{\mu}_{|T|}=\widetilde\mu_{\sigma}.\] 
has a pair of subordination functions depend on $\lambda$. We consider them as two-variable functions and write them as
\[\omega_1(\lambda,\cdot) \textrm{ and } \omega_2(\lambda,\cdot).\]
Bercovici and the second author \cite{BercoviciZhong2022Rdiag} find that the Brown measure of $y$ can be calculated using the values of $\omega_1(\lambda,0)$ and $\omega_2(\lambda,0)$ for each $\lambda\in\C$. They also prove (\cite[Lemma 3.3]{BercoviciZhong2022Rdiag}) that, for each $\lambda\in\C$, at least one of $\omega_1(\lambda,0)$ and $\omega_2(\lambda,0)$ is finite. Section~\ref{sect.approx.subordination} studies the approximate subordination functions for matrices, and the observation that at least one of $\omega_1(\lambda,0)$ and $\omega_2(\lambda,0)$ is finite provide huge conveniences to prove that these approximate subordination functions converge to $\omega_1$ and $\omega_2$. On the other hand, Belinschi, Bercovici and the first author \cite{BelinschiBercoviciHo2022} recently use the Denjoy--Wolff point theory to study continuity of subordination functions. This continuity also allows us to control the behavior of the approximate subordination functions when we take the limit as the size of the matrices to infinity.

The following proposition proves that $\omega_1$ and $\omega_2$ have continuous extensions. The proof uses the fact that the values of the subordination functions at any $z\in\C^+$ in \eqref{eqn:subordination-scalar} can be identified as a unique particular solution of a fixed point equation of a holomorphic self-map on $\C^+$. This fixed point is called the Denjoy--Wolff point. See \cite[Chapter 5]{Shapiro1993composition} for an introduction to the Denjoy--Wolff theory (see also Section 1 in \cite{BelinschiBercoviciHo2022}). The continuous extensions of $\omega_1$ and $\omega_2$ relies on the continuity of Denjoy--Wolff points \cite{Heins1941, BelinschiBercoviciHo2022}.

\begin{proposition}
	\label{prop:contomega}
	The subordination functions $\omega_1$ and $\omega_2$ have continuous extensions on $\C\times (\C^+\cup\R)$.
In particular, the continuous extensions of $\omega_j$ satisfy 
\[\omega_j(\lambda,0) = \lim_{\eta\to 0} \omega_j(\lambda,i\eta),\quad\lambda\in\C.\]
\end{proposition}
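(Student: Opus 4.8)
My plan is to realise each subordination function as the Denjoy--Wolff point of an analytic self-map of $\C^+$ depending continuously on the parameters $(\lambda,z)$, and then to invoke the continuity of Denjoy--Wolff points. Write $\mu_1=\mu_1(\lambda)=\widetilde\mu_{\vert a-\lambda\vert}$ and $\mu_2=\widetilde\mu_\sigma$. First I would record that $\mu_2$ is not a point mass (the law of $\sigma$ is not $\delta_0$ by hypothesis) and that $\widetilde\mu_{\vert a-\lambda\vert}$ is a point mass only when $a$ is a scalar operator and $\lambda=a$; I therefore assume $a$ is not a scalar (the scalar case being elementary and handled separately), so that $\mu_1(\lambda)$ is never a point mass. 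Following the fixed-point approach of Belinschi and Bercovici to subordination, as recalled in \cite[Proposition~3.1]{BercoviciZhong2022Rdiag}, for $z\in\C^+$ the value $\omega_2(\lambda,z)$ is the unique fixed point in $\C^+$ of
\[
	g_{\lambda,z}(w)\deq w-F_{\mu_2}(w)+F_{\mu_1(\lambda)}\bigl(F_{\mu_2}(w)-w+z\bigr),
\]
obtained as $\lim_{n\to\infty}g_{\lambda,z}^{\circ n}(w_0)$ for any $w_0\in\C^+$; symmetrically, $\omega_1(\lambda,z)$ is the Denjoy--Wolff point of $\widetilde g_{\lambda,z}(w)\deq w-F_{\mu_1(\lambda)}(w)+F_{\mu_2}\bigl(F_{\mu_1(\lambda)}(w)-w+z\bigr)$. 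Using the Nevanlinna representation of $F_\mu$ --- which gives $F_\mu(w)-w\in\C^+$ for $w\in\C^+$ when $\mu$ is not a point mass, and $\operatorname{Im}F_\mu(u)>\operatorname{Im}u$ for $u\in\C^+$ in that case --- I would check that $g_{\lambda,z}$ and $\widetilde g_{\lambda,z}$ remain analytic self-maps of $\C^+$ for \emph{every} $z\in\C^+\cup\R$, with $\operatorname{Im}g_{\lambda,z}(w)>\operatorname{Im}z\ge 0$. One then \emph{defines} the extension of $\omega_2$ (resp.\ $\omega_1$) to $z\in\R$ to be the Denjoy--Wolff point of $g_{\lambda,z}$ (resp.\ $\widetilde g_{\lambda,z}$), which a priori lies in $\C^+\cup\R\cup\{\infty\}$.

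The decisive structural feature is that $z$ enters the formula for $g_{\lambda,z}$ only as an additive \emph{real} constant inside $F_{\mu_1(\lambda)}$, while $w$ stays in the interior $\C^+$; hence $(\lambda,z)\mapsto g_{\lambda,z}$ is continuous on $\C\times(\C^+\cup\R)$ for the topology of locally uniform convergence on $\C^+$, provided $\lambda\mapsto F_{\mu_1(\lambda)}$ is continuous for that topology. To see the latter I would show $\lambda\mapsto\mu_1(\lambda)$ is weakly continuous: the moments $\tau\bigl[\bigl((a-\lambda)^*(a-\lambda)\bigr)^k\bigr]$ are polynomials in $\lambda,\overline\lambda$, hence continuous in $\lambda$; since $\|a-\lambda\|$ is bounded on compact sets, the spectral distributions $\mu_{\vert a-\lambda\vert^2}$ have uniformly compact support there, so moment convergence upgrades to weak convergence; and pushing forward under $t\mapsto\sqrt t$ and symmetrising gives weak continuity of $\lambda\mapsto\widetilde\mu_{\vert a-\lambda\vert}$, whence $G_{\mu_1(\lambda)}$ and $F_{\mu_1(\lambda)}=1/G_{\mu_1(\lambda)}$ converge locally uniformly on $\C^+$ as $\lambda$ varies.

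Finally I would pass from continuity of the maps to continuity of their Denjoy--Wolff points. For $z\in\C^+$, $\omega_j(\lambda,z)$ is already the (interior) Denjoy--Wolff point of the corresponding map, so it suffices to apply the continuity of Denjoy--Wolff points under locally uniform convergence of analytic self-maps --- due to Heins \cite{Heins1941}, in the form needed here to \cite{BelinschiBercoviciHo2022}. Concretely: if $(\lambda_n,z_n)\to(\lambda_0,z_0)$ in $\C\times(\C^+\cup\R)$, then $g_{\lambda_n,z_n}\to g_{\lambda_0,z_0}$ locally uniformly on $\C^+$, and since neither $\mu_1(\lambda_0)$ nor $\mu_2$ is a point mass the limit $g_{\lambda_0,z_0}$ lies within the scope of that statement (in particular it is not an elliptic automorphism of $\C^+$), so $\omega_2(\lambda_n,z_n)\to\omega_2(\lambda_0,z_0)$. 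This makes the extension of $\omega_2$ continuous on $\C\times(\C^+\cup\R)$, and the same argument handles $\omega_1$; the stated identity $\omega_j(\lambda,0)=\lim_{\eta\to 0}\omega_j(\lambda,i\eta)$ is the special case $z_0=0$.

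The main obstacle is the boundary behaviour at $z\in\R$: a priori the subordination functions live only on $\C\times\C^+$ and could oscillate or escape to $\infty$ as $z\to\R$. The fixed-point formulation neutralises this by turning $z$ into a harmless additive constant, so that the \emph{map} $g_{\lambda,z}$ extends analytically across $z\in\R$ while only the \emph{fixed point} may migrate to $\R\cup\{\infty\}$ --- and that migration is exactly what the Denjoy--Wolff continuity theorem controls, its one hypothesis (the limiting map not being an elliptic automorphism) being guaranteed by $\sigma$, equivalently $\vert T\vert$, not being a scalar. The remaining care is to verify that this hypothesis holds uniformly for $\lambda$ in compact sets --- which follows from the uniform compactness of $\supp\mu_1(\lambda)$ --- and to dispose of the trivial scalar-$a$ case.
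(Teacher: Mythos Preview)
Your proposal is correct and follows essentially the same route as the paper: realise $\omega_j(\lambda,z)$ as the Denjoy--Wolff point of the iteration map $\varphi_{\lambda,z}(w)=F_{\mu_2}(F_{\mu_1}(w)-w+z)-(F_{\mu_1}(w)-w+z)+z$ (which coincides with your $\widetilde g_{\lambda,z}$), observe that $\lambda\mapsto\mu_1(\lambda)$ is weakly continuous so that $\varphi_{\lambda_n,z_n}\to\varphi_{\lambda,z}$ pointwise, and apply the Denjoy--Wolff continuity result of \cite{BelinschiBercoviciHo2022}. Your write-up is in fact more careful than the paper's in isolating the scalar-$a$ edge case and in spelling out why the iteration maps remain self-maps of $\C^+$ at boundary values $z\in\R$.
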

\begin{proof}
	Consider the family of holomorphic functions parametrized by $\lambda$ and $z$ defined by
	\[\varphi_{\lambda,z}(w) = F_{\mu_2}(F_{\mu_1}(w)-w+z)-(F_{\mu_1}(w)-w+z)+z, \quad w\in\C^+,\]
	which is a self-map on the upper half plane. Since neither $\mu_1$ nor $\mu_2$ is a single delta mass, for any $\lambda\in\C$, $z\in\C^+\cup\R$, $\omega_1(\lambda,z)$ is the Denjoy--Wolff point of $\varphi_{\lambda,z}$ (\cite[Corollary 3.5]{BelinschiBercoviciHo2022}). 
	
	It is clear that $\mu_1=\widetilde{\mu}_{|a-\lambda|}$ is continuous with respect to $\lambda$ in the space of probability measures on $\mathbb{R}$ equipped with the weak topology. Therefore, if $\lambda_n\to\lambda$ in $\C$ and $z_n\to z$ in $\C^+\cup\R$, we have $\varphi_{\lambda_n,z_n}\to \varphi_{\lambda, z}$ pointwise. By Theorem 1.1 of \cite{BelinschiBercoviciHo2022}, $\omega_1(\lambda_n,z_n)\to \omega_1(\lambda,z)$. This shows the continuity of $\omega_1$. The proof of the continuity of $\omega_2$ is similar.
\end{proof}

\begin{proposition}
	\label{prop:suborduniformconv}
	Let $a_N,\sigma_N\in\mathcal{A}$ be such that $\sigma_N\geq 0$, $a_N\to a$ and $\sigma_N\to\sigma$ in $\ast$-distribution. Given $\lambda\in\mathbb{C}$, consider the free additive convolution of $\widetilde{\mu}_{|a_N-\lambda|}$ and $\widetilde{\mu}_{\sigma_N}$ and let $\omega_1^{(N)}(\lambda,\cdot)$ and $\omega_2^{(N)}(\lambda,\cdot)$ be the corresponding subordination functions. Then for any compact sets $K\subset\C$ and $K'\subset\C^+\cup\R$, if both $\omega_1(\lambda,z)$ and $\omega_2(\lambda,z)$ are uniformly bounded for all $\lambda\in K$ and $z\in K'$, then
	\[(\omega_1^{(N)}(\lambda,z),\omega_2^{(N)}(\lambda,z))\to (\omega_1(\lambda,z),\omega_2(\lambda,z))\] 
	uniformly in $\lambda\in K$ and $z\in K'$. 
\end{proposition}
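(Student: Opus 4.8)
The plan is to promote Proposition~\ref{prop:contomega} to a statement that is joint in $N$, $\lambda$ and $z$ by a compactness argument. Suppose the conclusion fails for some compact $K\subset\C$ and $K'\subset\C^+\cup\R$: then there are $\varepsilon>0$, an index $j\in\{1,2\}$, a subsequence $N_k\to\infty$ and points $\lambda_k\in K$, $z_k\in K'$ with $|\omega_j^{(N_k)}(\lambda_k,z_k)-\omega_j(\lambda_k,z_k)|\ge\varepsilon$; passing to a subsequence we may assume $\lambda_k\to\lambda_*\in K$ and $z_k\to z_*\in K'$. It then suffices to prove $\omega_j^{(N_k)}(\lambda_k,z_k)\to\omega_j(\lambda_*,z_*)$, because $\omega_j(\lambda_k,z_k)\to\omega_j(\lambda_*,z_*)$ already holds by Proposition~\ref{prop:contomega}, and this contradicts the choice of the points.

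To obtain this, I would first record the convergence of the relevant measures. Write $\mu_1^{(N)}(\lambda)=\widetilde{\mu}_{|a_N-\lambda|}$, $\mu_2^{(N)}=\widetilde{\mu}_{\sigma_N}$ and $\mu_1(\lambda)=\widetilde{\mu}_{|a-\lambda|}$, $\mu_2=\widetilde{\mu}_\sigma$. For each $m$ the moment $\tau\bigl[\bigl((a_{N_k}-\lambda_k)^*(a_{N_k}-\lambda_k)\bigr)^m\bigr]$ is a fixed noncommutative polynomial in $a_{N_k},a_{N_k}^*$ whose coefficients are polynomial in $(\lambda_k,\bar\lambda_k)$; only finitely many $\ast$-moments of $a_{N_k}$ occur, each converging to the corresponding $\ast$-moment of $a$, and $\lambda_k\to\lambda_*$, so this moment converges to $\tau\bigl[\bigl((a-\lambda_*)^*(a-\lambda_*)\bigr)^m\bigr]$. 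Since $a\in\mathcal{A}$ is bounded, the limit measure is compactly supported, hence moment-determined, and convergence of the second moments yields tightness; therefore $\mu_1^{(N_k)}(\lambda_k)\to\mu_1(\lambda_*)$ and $\mu_2^{(N_k)}\to\mu_2$ weakly. Weak convergence on $\R$ implies locally uniform convergence of Cauchy transforms on $\C^+$ (pointwise convergence is immediate and $|G_\nu(w)|\le 1/\operatorname{Im}w$ gives a normal family), hence, $G_\nu$ being zero-free on $\C^+$, locally uniform convergence of the reciprocals $F_{\mu_1^{(N_k)}(\lambda_k)}\to F_{\mu_1(\lambda_*)}$ and $F_{\mu_2^{(N_k)}}\to F_{\mu_2}$. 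Forming, as in the proof of Proposition~\ref{prop:contomega}, the self-maps of $\C^+$
\[\varphi_{\lambda,z}^{(N)}(w)=F_{\mu_2^{(N)}}\bigl(F_{\mu_1^{(N)}(\lambda)}(w)-w+z\bigr)-\bigl(F_{\mu_1^{(N)}(\lambda)}(w)-w+z\bigr)+z\]
(these are self-maps because $\operatorname{Im}(F_{\mu_1}(w)-w)>0$ for the non-point-mass measures in play), together with the analogous $\varphi_{\lambda_*,z_*}$, and using that $w\mapsto F_{\mu_1(\lambda_*)}(w)-w+z_*$ carries compact subsets of $\C^+$ into compact subsets of $\C^+$, one chains the convergences through the composition to get $\varphi_{\lambda_k,z_k}^{(N_k)}\to\varphi_{\lambda_*,z_*}$ pointwise on $\C^+$.

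Now $\omega_1^{(N)}(\lambda,z)$ is the Denjoy--Wolff point of $\varphi_{\lambda,z}^{(N)}$ (the attracting interior fixed point for $z\in\C^+$, its boundary limit for $z\in\R$), and $\omega_1(\lambda_*,z_*)$ is that of $\varphi_{\lambda_*,z_*}$; none of these maps is an elliptic automorphism since the measures are not point masses. By Theorem~1.1 of \cite{BelinschiBercoviciHo2022}, the pointwise convergence $\varphi_{\lambda_k,z_k}^{(N_k)}\to\varphi_{\lambda_*,z_*}$ forces the Denjoy--Wolff points to converge in $\overline{\C^+}\cup\{\infty\}$; since $\omega_1(\lambda_*,z_*)$ is finite by the standing boundedness hypothesis on $K\times K'$, this is convergence in $\C$, i.e. $\omega_1^{(N_k)}(\lambda_k,z_k)\to\omega_1(\lambda_*,z_*)$. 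Interchanging the roles of $\mu_1$ and $\mu_2$ (so that $\omega_2$ is the Denjoy--Wolff point of the analogous map, consistent with $\omega_2=F_{\mu_1}(\omega_1)-\omega_1+z$ from \eqref{eqn:subordination-scalar}) and using the boundedness of $\omega_2$ gives $\omega_2^{(N_k)}(\lambda_k,z_k)\to\omega_2(\lambda_*,z_*)$ in the same way. Either way we reach the desired contradiction, proving uniform convergence on $K\times K'$.

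The hard part will be the boundary regime $z\in\R$, where the Denjoy--Wolff points may sit on $\R\cup\{\infty\}$ and the $F$-transforms need not extend continuously to them, so one cannot argue by plain continuity of $F$; this is precisely what the quantitative Denjoy--Wolff continuity theorem of \cite{BelinschiBercoviciHo2022} is built to handle, and it is also the reason for the boundedness hypothesis on the limiting $\omega_j$ over $K\times K'$ — it prevents the Denjoy--Wolff points from escaping to $\infty$, so that convergence on the Riemann sphere is genuine convergence in $\C$. The only other delicate point, the simultaneous limit $\widetilde{\mu}_{|a_{N_k}-\lambda_k|}\to\widetilde{\mu}_{|a-\lambda_*|}$ as $N_k\to\infty$ and $\lambda_k\to\lambda_*$ together, is routine given the boundedness of $a$ and the polynomial dependence of the moments on the finitely many relevant $\ast$-moments of $a_{N_k}$ and on $\lambda_k$.
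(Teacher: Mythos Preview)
Your proof is correct and follows essentially the same approach as the paper's: argue by contradiction, extract convergent subsequences $(\lambda_k,z_k)\to(\lambda_*,z_*)$, show the associated self-maps $\varphi^{(N_k)}_{\lambda_k,z_k}$ converge pointwise to $\varphi_{\lambda_*,z_*}$, and invoke Theorem~1.1 of \cite{BelinschiBercoviciHo2022} on continuity of Denjoy--Wolff points. You are in fact more careful than the paper in two respects: you spell out why $\omega_j(\lambda_k,z_k)\to\omega_j(\lambda_*,z_*)$ (via Proposition~\ref{prop:contomega}) to close the contradiction, and you make explicit that the boundedness hypothesis on $\omega_j$ over $K\times K'$ is what guarantees the Denjoy--Wolff convergence lands in $\C$ rather than at $\infty$ on the Riemann sphere --- a point the paper leaves implicit.
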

\begin{proof}
	If the conclusion does not hold, there exist $\varepsilon_0>0$ and sequences $\lambda_N\in K$ and $z_N\in K'$ such that 
	\begin{equation}
		\label{eq:subordNUnif}
		\left\vert \omega_1^{(N)}(\lambda_N,z_N)-\omega_1(\lambda_N,z_N)\right\vert\geq \varepsilon_0
	\end{equation}
	or
	\[\left\vert \omega_2^{(N)}(\lambda_N,z_N)-\omega_2(\lambda_N,z_N)\right\vert\geq \varepsilon_0.\]
	Without loss of generality, we assume the former holds. By extracting subsequences of $(\lambda_N)$ and $(z_N)$ if necessary, we assume $\lambda_N\to\lambda$ and $z_N\to z$ for some $\lambda\in K$ and $z\in K'$.

	Write $\mu_{1,N} = \widetilde{\mu}_{|a_N-\lambda_N|}$ and $\mu_{2,N} = \widetilde{\mu}_{\sigma_N}$. Consider the holomorphic function
	\[\varphi_{N}(w) = F_{\mu_{2,N}}(F_{\mu_{1,N}}(w)-w+z_N)-(F_{\mu_{1,N}}(w)-w+z_N)+z_N, \quad w\in\C^+\]
	which is a self-map on the upper half plane. Define $\varphi_{\lambda, z}$ as in the proof of Proposition~\ref{prop:contomega}. Then $\varphi_N\to \varphi_{\lambda, z}$ pointwise. But then by Theorem 1.1 of \cite{BelinschiBercoviciHo2022}, $\omega_1^{(N)}(\lambda_N, z_N)\to \omega_1(\lambda, z_N)$, contradicting~\eqref{eq:subordNUnif}.
\end{proof}

Now we discuss the support of the Brown measure of $T+a$. Recall that
\begin{equation}
\label{def:support-Omega-set}
\Omega(T, a)=\{ \lambda\in\mathbb{C}:
||(a-\lambda)^{-1}||_2 ||T||_2 >1 \quad\text{and}\quad
||a-\lambda||_2 ||T^{-1}||_2>1 \}.
\end{equation}
We set 
\begin{equation}
\label{def:singular-set-S}
S(T, a)=\{\lambda\in\mathbb{C}: \mu_{1}(\{0\}) + \mu_{2}(\{0\})\geq 1 \}.
\end{equation}
Since neither $\mu_1$ nor $\mu_2$ is a single delta mass at zero, it follows that $\lambda\in S(T, a)$ implies that $0$ is an eigenvalue of $|T|$ and $|a-\lambda|$. We see that $\lambda$ satisfies the defining conditions for $\Omega(T, a)$. Hence, $S(T, a)\subset \Omega(T, a)$.

\begin{theorem}\cite[Theorem 4.1 and Proposition 4.11]{BercoviciZhong2022Rdiag}
	\label{thm:main-1-BZhong2022}
	For any $\lambda\in\C$ and $\eta>0$, $\omega_j(\lambda, i\eta)$ is a purely imaginary number ($j=1,2$).  The set $S(T, a)$ consists of finitely many elements and we have
	\begin{equation}
	\begin{aligned}
	\Omega(T,a)\backslash S(T, a)&= \{ \lambda\in\mathbb{C}:\vert\omega_1(\lambda,0)\vert\in (0,\infty)  \}\\
	&= \{ \lambda\in\mathbb{C}: \vert\omega_2(\lambda,0)\vert\in (0,\infty)\}\\
	&=\{ \lambda\in\mathbb{C}: 0<f_{\mu_1\boxplus\mu_2}(0)<\infty \},
	\end{aligned}
	\end{equation}
	where $f_{\mu_1\boxplus\mu_2}$ denotes the density function of the absolutely continuous part of 
	$\mu_1\boxplus\mu_2$.
	Moreover, 
	\[
	S(T, a)=\{ \lambda\in\mathbb{C}: \omega_1(\lambda,0)= \omega_2(\lambda,0)=0\},
	\]
	and
	\[
	   \mathbb{C}\backslash \Omega(T,a)=\{\lambda\in\mathbb{C}: \text{exactly one of}\, \,\,\omega_1(\lambda,0), \omega_2(\lambda,0)\,\,\, \text{is infinity}\}.
	\]
	
	The Brown measure of $T+a$ is supported in the closure of $\Omega(T, a)$ and is absolutely continuous in $D(T, a)$, where 
	\[
	D(T, a)=\Omega(T,a)\backslash S(T, a).
	\]
\end{theorem}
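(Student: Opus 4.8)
The plan is to compute the logarithmic potential $h(\lambda)=\tau[\log\vert T+a-\lambda\vert]$ of $y=T+a$ explicitly through the scalar free convolution $\mu_1\boxplus\mu_2$, and then to extract every assertion from the behaviour of the subordination functions $\omega_1(\lambda,\cdot),\omega_2(\lambda,\cdot)$ and of the density $f_{\mu_1\boxplus\mu_2}$ as the spectral parameter tends to $0$. First note that $h(\lambda)=\tfrac12\tau[\log((T+a-\lambda)^*(T+a-\lambda))]=\int_\R\log\vert s\vert\,d\nu_\lambda(s)$, where $\nu_\lambda$ is the symmetrized singular value distribution of $T+a-\lambda$, i.e.\ the spectral law of $\left(\begin{smallmatrix}0 & T+a-\lambda\\ (T+a-\lambda)^* & 0\end{smallmatrix}\right)\in M_2(\mathcal A)$. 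The crucial input is the $R$-diagonality of $T$: writing $T=uh$ with $h=\vert T\vert$ (which is free from $a$) and $u$ a Haar unitary free from $\{h,a\}$, the Hermitizations of $T$ and of $a-\lambda$ are $(\C\oplus\C)$-even elements that are free with amalgamation over the diagonal $\C\oplus\C\subset M_2$ — this is the Nica--Shlyakhtenko--Speicher description of $R$-diagonal elements, and it can also be checked directly by a moment computation that factors through the freeness of $u$. For even elements free over $\C\oplus\C$ the operator-valued free convolution collapses to the scalar free additive convolution of the diagonal laws, which yields $\nu_\lambda=\widetilde\mu_{\vert a-\lambda\vert}\boxplus\widetilde\mu_{\vert T\vert}=\mu_1\boxplus\mu_2$, hence
\[
  h(\lambda)=\int_\R\log\vert s\vert\,d(\mu_1\boxplus\mu_2)(s),\qquad \mu_{T+a}=\frac1{2\pi}\Delta_\lambda h .
\]
Since $\mu_1$ and $\mu_2$ are symmetric, the elementary computation recalled above shows $\omega_j(\lambda,i\eta)\in i(0,\infty)$ for all $\eta>0$, which is the first assertion.

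Next I would study the boundary values $\omega_j(\lambda,0)=\lim_{\eta\to0^+}\omega_j(\lambda,i\eta)\in i[0,\infty]$, which exist by Proposition~\ref{prop:contomega}. Taking $z=i\eta\to0$ in $\omega_1(\lambda,z)+\omega_2(\lambda,z)-z=F_{\mu_1\boxplus\mu_2}(z)$ gives $F_{\mu_1\boxplus\mu_2}(0^+)=\omega_1(\lambda,0)+\omega_2(\lambda,0)$, hence by Stieltjes inversion $\pi f_{\mu_1\boxplus\mu_2}(0)=\vert\omega_1(\lambda,0)+\omega_2(\lambda,0)\vert^{-1}$ (with $1/0=\infty$, $1/\infty=0$); this already identifies $\{0<f_{\mu_1\boxplus\mu_2}(0)<\infty\}$ with $\{0<\vert\omega_1(\lambda,0)+\omega_2(\lambda,0)\vert<\infty\}$. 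Writing $\omega_j(\lambda,0)=ic_j$ with $c_j\in[0,\infty]$, the fixed-point identities $F_{\mu_j}(\omega_j(\lambda,0))=\omega_1(\lambda,0)+\omega_2(\lambda,0)$ read $c_j(c_1+c_2)\int\frac{d\mu_j(s)}{c_j^2+s^2}=1$; plugging in $\int s^{-2}d\mu_1=\|(a-\lambda)^{-1}\|_2^2$, $\int s^{2}d\mu_1=\|a-\lambda\|_2^2$, $\int s^{-2}d\mu_2=\|T^{-1}\|_2^2$, $\int s^{2}d\mu_2=\|T\|_2^2$ (read as $+\infty$ when the relevant operator is not invertible or the integral diverges) and examining how these two equations degenerate as a parameter tends to $0$ or to $\infty$ gives exactly: $c_1,c_2$ both finite and nonzero iff $\|(a-\lambda)^{-1}\|_2\|T\|_2>1$ and $\|a-\lambda\|_2\|T^{-1}\|_2>1$; $c_1=c_2=0$ exactly on $S(T,a)$ (which also follows from $\mu_1(\{0\})+\mu_2(\{0\})\ge1$ together with the atom criterion for free convolution); and exactly one of $c_1,c_2$ equal to $\infty$ off $\overline{\Omega(T,a)}$. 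This is the chain of equalities for $\Omega(T,a)\setminus S(T,a)$ and the descriptions of $S(T,a)$ and of $\C\setminus\Omega(T,a)$. Finiteness of $S(T,a)$ is separate and elementary: $\mu_1(\{0\})=\mu_{\vert a-\lambda\vert}(\{0\})=\tau(p_\lambda)$ with $p_\lambda$ the spectral projection of $a$ at the eigenvalue $\lambda$; distinct $p_\lambda$ are orthogonal with $\sum_\lambda\tau(p_\lambda)\le1$, so $\mu_1(\{0\})\ge1-\mu_2(\{0\})>0$ can hold for at most $(1-\mu_2(\{0\}))^{-1}$ values of $\lambda$.

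For the Brown measure itself: on the open set $D(T,a)$ the functions $\omega_j(\lambda,0)$ are finite, nonzero and depend real-analytically on $\lambda$ (implicit function theorem applied to the $\eta=0$ fixed-point system, using that $\tau(((a-\lambda)^*(a-\lambda)+c^2)^{-1})$ is real-analytic in $\lambda$ for $c>0$), so differentiating $h(\lambda)=\int_\R\log\vert s\vert\,d(\mu_1\boxplus\mu_2)(s)$ twice in $\lambda$ produces an explicit real-analytic density for $\mu_{T+a}$ on $D(T,a)$, finite because $f_{\mu_1\boxplus\mu_2}(0)$ is; hence $\mu_{T+a}$ is absolutely continuous there. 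To obtain $\supp\mu_{T+a}\subseteq\overline{\Omega(T,a)}$ it suffices to show $h$ is harmonic on $\C\setminus\overline{\Omega(T,a)}$; there $F_{\mu_1\boxplus\mu_2}(0^+)=\infty$, so $0$ lies outside $\supp(\mu_1\boxplus\mu_2)$ locally uniformly and one may differentiate under the integral to get $\Delta_\lambda h\equiv0$ — alternatively, in the two regimes where $a-\lambda$ resp.\ $T$ is invertible one runs the holomorphic functional calculus argument of Haagerup--Larsen for the annulus. Together with the finiteness of $S(T,a)$ this gives the final assertion.

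The step I expect to be the main obstacle is the boundary analysis of the pair $(\omega_1,\omega_2)$ as $\eta\downarrow0$: proving the limits exist in $i[0,\infty]$, excluding the ``mixed'' degenerations (one limit $0$ and the other finite and nonzero, or one $0$ and the other $\infty$, which must be shown inconsistent with the fixed-point system), and converting the resulting trichotomy faithfully into the operator-norm inequalities defining $\Omega(T,a)$, including the cases where $a-\lambda$ or $T$ fails to be invertible. Making the double $\lambda$-differentiation rigorous — so that one truly produces a density on $D(T,a)$ rather than merely a subharmonic potential — and justifying the harmonicity on $\C\setminus\overline{\Omega(T,a)}$ is the other delicate point; by contrast the $R$-diagonal reduction in the first step, while it is the conceptual heart of the argument, is essentially bookkeeping once the amalgamated-freeness picture is in place.
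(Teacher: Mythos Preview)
This theorem is not proved in the present paper; it is quoted from \cite[Theorem~4.1 and Proposition~4.11]{BercoviciZhong2022Rdiag} and used as a black box, so there is no in-paper argument to compare against. Your outline does follow the overall strategy of that reference: the reduction $\nu_\lambda=\widetilde\mu_{\vert a-\lambda\vert}\boxplus\widetilde\mu_{\vert T\vert}$ via $R$-diagonality (which the present paper also records, in Section~2.3, citing \cite[Proposition~3.5]{HaagerupSchultz2007}), the boundary analysis of $(\omega_1,\omega_2)$ on the positive imaginary axis, and the matching of the resulting trichotomy to the $\|\cdot\|_2$-inequalities defining $\Omega(T,a)$ and $S(T,a)$. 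You have also correctly flagged, in your last paragraph, that excluding the mixed degenerations and converting the trichotomy into those inequalities is where the real work lies; your finiteness argument for $S(T,a)$ is clean and correct.

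One concrete gap: in the support step you write that on $\C\setminus\overline{\Omega(T,a)}$ one has $F_{\mu_1\boxplus\mu_2}(0^+)=\infty$, ``so $0$ lies outside $\supp(\mu_1\boxplus\mu_2)$ locally uniformly and one may differentiate under the integral''. The first conclusion does not follow: $G_{\mu_1\boxplus\mu_2}(0^+)=0$ only gives vanishing of the density at $0$, not a spectral gap, so neither $0\notin\supp(\mu_1\boxplus\mu_2)$ nor the legitimacy of differentiating $\int\log\vert s\vert\,d(\mu_1\boxplus\mu_2)(s)$ in $\lambda$ is yet established. The harmonicity of $h$ off $\overline{\Omega(T,a)}$ needs an honest argument --- for instance a direct computation of $\partial_{\bar\lambda}\partial_\lambda$ of the regularized potential $\tau[\log((y-\lambda)^*(y-\lambda)+\varepsilon)]$ together with a controlled $\varepsilon\downarrow0$ limit --- and the Haagerup--Larsen functional-calculus alternative you mention handles only the inner/outer annular regimes, not a general complement. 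With that caveat, the sketch is structurally sound.
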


\begin{definition}
	\label{def:Depsilon}
	For any $\varepsilon>0$, we denote 
	\begin{equation*}
	D^{(\varepsilon)}(T,a)= \{ \lambda\in\mathbb{C}: \vert\omega_1(\lambda, 0)\vert, \vert\omega_2(\lambda,0)\vert \in (\varepsilon,1/\varepsilon)  \}.
	\end{equation*}
\end{definition}

We then have 
\[
D(T,a)=\bigcup_{n=1}^\infty D^{(1/n)}(T,a).
\]

\begin{proposition}
	\label{prop:boundedCauchy}
	Let $K\subset\mathbb{C}\backslash\Omega(T,a)$ be a compact set. Consider the free additive convolution of $\mu_1$ and $\mu_2$. Then there exists a constant $C>0$ such that 
	\[G_{\mu_1\boxplus \mu_2}(i \eta)\leq C\]
	for all $\eta\geq 0$ and $\lambda\in K$.
\end{proposition}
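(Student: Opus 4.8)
The plan is to show that when $\lambda \in K \subset \Omega^c$, the free convolution $\mu_1 \boxplus \mu_2$ has bounded Cauchy transform on the imaginary axis, uniformly over $\lambda \in K$, using the characterization of $\Omega^c$ from Theorem~\ref{thm:main-1-BZhong2022}. Recall that $\mu_1 = \widetilde\mu_{|a-\lambda|}$ and $\mu_2 = \widetilde\mu_\sigma$ are both symmetric, so that $G_{\mu_1\boxplus\mu_2}(i\eta) \in i(-\infty, 0)$ for $\eta > 0$, and by the subordination relation \eqref{eqn:subordination-scalar}, $G_{\mu_1\boxplus\mu_2}(i\eta) = G_{\mu_1}(\omega_1(\lambda, i\eta)) = G_{\mu_2}(\omega_2(\lambda, i\eta))$, where $\omega_1(\lambda, i\eta), \omega_2(\lambda, i\eta) \in i(0,\infty)$. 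Since $|G_{\mu_1\boxplus\mu_2}(i\eta)|$ is decreasing in $\eta > 0$ and vanishes as $\eta \to \infty$, it suffices to bound $|G_{\mu_1\boxplus\mu_2}(i\eta)|$ near $\eta = 0$, i.e. to show $\lim_{\eta \to 0^+} |G_{\mu_1\boxplus\mu_2}(i\eta)|$ is finite and locally bounded in $\lambda$.

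First I would use Theorem~\ref{thm:main-1-BZhong2022}: for $\lambda \in \Omega^c$, exactly one of $\omega_1(\lambda, 0), \omega_2(\lambda, 0)$ equals $\infty$. Say $\omega_1(\lambda, 0) = \infty$ (the other case is symmetric, or handled by relabeling). Then from the subordination identity evaluated at the boundary, $G_{\mu_1\boxplus\mu_2}(i\cdot)$ at $\eta = 0$ equals $G_{\mu_1}(\omega_1(\lambda, 0))$; since $G_{\mu_1}(i t) \to 0$ as $t \to \infty$ along the imaginary axis and $\omega_1(\lambda, 0)$ is purely imaginary with infinite modulus, this limit is $0$ — in particular finite. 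More carefully, one passes to the limit $\eta \to 0^+$ in $G_{\mu_1\boxplus\mu_2}(i\eta) = G_{\mu_1}(\omega_1(\lambda, i\eta))$ using Proposition~\ref{prop:contomega} (continuity of $\omega_1$ up to the real boundary) together with the fact that $|G_{\mu_1}(i t)| \le 1/t$ for a symmetric measure; since $\omega_1(\lambda, i\eta) = i\,|\omega_1(\lambda, i\eta)|$ with $|\omega_1(\lambda, i\eta)| \to \infty$, we get $G_{\mu_1\boxplus\mu_2}(i\eta) \to 0$. Hence the limiting value is $0$, and so $|G_{\mu_1\boxplus\mu_2}(i\eta)| \le |G_{\mu_1\boxplus\mu_2}(i0^+)| = 0 < \infty$... but this would force the Cauchy transform to vanish, which is too strong — so the correct statement to extract is just that the \emph{boundary limit} is finite pointwise, and then one upgrades to a uniform bound over $K$.

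For the uniformity over the compact set $K$, I would argue by contradiction: if no uniform constant $C$ exists, there are $\lambda_n \in K$ and $\eta_n \ge 0$ with $|G_{\mu_1\boxplus\mu_2}(i\eta_n)| \to \infty$ (with $\mu_1$ depending on $\lambda_n$). Passing to a subsequence, $\lambda_n \to \lambda_* \in K \subset \Omega^c$, and by monotonicity in $\eta$ we may assume $\eta_n \to 0$. By Proposition~\ref{prop:contomega}, $\omega_j(\lambda_n, i\eta_n) \to \omega_j(\lambda_*, 0)$, and since $\lambda_* \in \Omega^c$ exactly one of these limits, say $\omega_1(\lambda_*, 0)$, is $\infty$ while $\omega_2(\lambda_*, 0) \in \C^+ \cup \R$ is finite. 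Using $G_{\mu_1\boxplus\mu_2}(i\eta_n) = G_{\mu_2}(\omega_2(\lambda_n, i\eta_n))$ — the subordination equation written via the measure whose subordination function stays bounded — and continuity of $G_{\mu_2}$ at the finite purely imaginary point $\omega_2(\lambda_*, 0)$ (note $G_{\mu_2}$ of a symmetric measure extends continuously to the imaginary axis since $\mu_2$ is not a point mass), we conclude $G_{\mu_1\boxplus\mu_2}(i\eta_n) \to G_{\mu_2}(\omega_2(\lambda_*, 0))$, a finite number, contradicting $|G_{\mu_1\boxplus\mu_2}(i\eta_n)| \to \infty$. The main obstacle I anticipate is the bookkeeping around \emph{which} subordination function degenerates: one must track, possibly splitting $K$ into the closed subsets where $\omega_1(\cdot, 0) = \infty$ respectively $\omega_2(\cdot, 0) = \infty$, and verify these are closed (hence compact) using the continuity from Proposition~\ref{prop:contomega} and the characterization in Theorem~\ref{thm:main-1-BZhong2022}, so that on each piece the ``good'' subordination function $\omega_{j}(\lambda, i\eta)$ stays in a bounded region of $\C^+ \cup \R$ away from the real singularities of $G_{\mu_j}$, making $G_{\mu_j}$ uniformly continuous there.
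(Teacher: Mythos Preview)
Your overall strategy uses the same ingredients as the paper (Proposition~\ref{prop:contomega} and Theorem~\ref{thm:main-1-BZhong2022}), but there is a real gap in the contradiction step. You write $G_{\mu_1\boxplus\mu_2}(i\eta_n) = G_{\mu_2}(\omega_2(\lambda_n, i\eta_n))$ via the \emph{bounded} subordination function and then appeal to continuity of $G_{\mu_2}$ at $\omega_2(\lambda_*, 0)$. But necessarily $\omega_2(\lambda_*, 0) = 0$: from your own pointwise argument $G_{\mu_1\boxplus\mu_2}(i\eta)\to 0$, while $G_{\mu_2}(it) = -i\int t/(t^2+x^2)\,d\mu_2 \neq 0$ for every $t>0$, so the boundary value of $\omega_2$ must be $0$. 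At $0$, however, $G_{\mu_2}$ need not extend continuously (e.g.\ if $\widetilde\mu_\sigma$ has an atom or unbounded density at $0$), so the contradiction does not close. The parenthetical ``$G_{\mu_2}$ of a symmetric measure extends continuously to the imaginary axis since $\mu_2$ is not a point mass'' is false. (Separately, the monotonicity of $\eta\mapsto |G_{\mu_1\boxplus\mu_2}(i\eta)|$ you invoke is not true in general, but this is harmless: the uniform bound $|G(i\eta)|\le 1/\eta$ already forces $\eta_n\to 0$.)

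The fix is the one you already carried out in your first paragraph: use the \emph{diverging} subordination function. Since $\omega_1(\lambda_n, i\eta_n)\to\infty$ by Proposition~\ref{prop:contomega} and $|G_\mu(it)|\le 1/t$ uniformly over all probability measures $\mu$, one gets $G_{\mu_1\boxplus\mu_2}(i\eta_n)=G_{\widetilde\mu_{|a-\lambda_n|}}(\omega_1(\lambda_n,i\eta_n))\to 0$, the desired contradiction. The paper's proof is shorter still: it uses $G_{\mu_1\boxplus\mu_2}(i\eta)=1/(\omega_1(\lambda,i\eta)+\omega_2(\lambda,i\eta)-i\eta)$ directly from~\eqref{eqn:subordination-scalar}, observes this is jointly continuous in $(\lambda,\eta)\in K\times[0,\infty)$ by Proposition~\ref{prop:contomega} with value $0$ at $\eta=0$ (one $\omega_j=\infty$, and both $\omega_j$ lie in $i[0,\infty)$ so there is no cancellation), and concludes boundedness on the compact set $K\times[0,1]$; for $\eta\ge 1$ the trivial bound $|G(i\eta)|\le 1$ finishes. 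No splitting into which $\omega_j$ diverges is needed.
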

\begin{proof}
	By the subordination relation~\eqref{eqn:subordination-scalar} and Proposition~\ref{prop:contomega}, $(\lambda, \eta)\mapsto G_{\mu_1\boxplus \mu_2}(i \eta)$ is continuous in $\lambda\in K$ and $\eta\geq 0$.  By Theorem \ref{thm:main-1-BZhong2022}, exactly one of $\omega_1(\lambda,0), \omega_2(\lambda,0)$ is infinity when $\lambda\in\mathbb{C}\backslash \Omega(T, a)$, thus $G_{\mu\boxplus\mu_2}(0)=0$. This shows that $G_{\mu_1\boxplus \mu_2}(i \eta)$ is bounded for $(\lambda,\eta) \in K \times (0,1]$. The proposition then follows from the observation that $\vert G_{\mu_1\boxplus \mu_2}(i \eta)\vert\leq 1$ if $\eta\geq 1$.
\end{proof}

\subsection{A result about the random matrix model}
For any $\lambda\in\mathbb{C}$, we have to study the limit of the singular value distribution of 
the random matrix $Y-\lambda $. 
Since $U$ and $V$ are independent Haar-distributed unitary matrices, both $Y-\lambda=U \Sigma V^*+A-\lambda$
and $U\Sigma V^*+ \vert A-\lambda \vert$ have the same singular values.
To this end, we consider the $N\times N$ random matrix of the form
\begin{equation}
\label{eqn:defn-X-N}
X=X_N=U\Sigma V^*+ \Xi
\end{equation}
where $U=U_N$ and $V=V_N$ are two independent Haar random unitary matrices, $\Sigma=\Sigma_N$ and $\Xi$ are sequences of deterministic matrices.
The general result about $X$ can be applied to $Y-\lambda$ by choosing $\Xi = \vert A - \lambda\vert$.

Without loss of generality, we may assume that $\Xi$ is a diagonal matrix
\begin{equation*}
\Xi=\text{diag}(\xi_1,\cdots, \xi_N),
\end{equation*}
where $\xi_i\in\mathbb{C}$ for all $i=1,\cdots, N$. 
For $\lambda$ in any compact set $K\subset \C$, $\Vert A-\lambda\Vert$ is bounded uniformly. Thus we may assume there exists some constant $C$ independent of $N$ such that
\begin{equation} 
\label{C-constant-inequality}
\Vert\Sigma\Vert, \Vert\Xi\Vert\leq C.
\end{equation}
This constant $C$ depends on the compact set $K$, which will be taken to be the support of a fixed test function when we prove the convergence of the empirical eigenvalue distribution of $Y$. So we surpress the dependence of $K$ in the notation of the constant $C$.

Denote the empirical distribution of the singular values of $\Xi$ by 
\[
 \mu_\Xi=\frac{1}{N}\sum_{i=1}^N\delta_{|\xi_i|},
\]
and assume that there is a $\xi\in\mathcal{A}$ such that
\[
\mu_\Xi\rightarrow \mu_\xi.
\]
Throughout this paper, we always assume that $\mu_\sigma$ and $\mu_\xi$ are not a single point mass at zero. 

A general approach for a non-Hermitian random matrix is to consider its Hermitian reduction. 
We consider Hermitian random matrices $H$ defined by 
\begin{equation}
\label{def:matrix-H}
H=\begin{bmatrix}
  U & 0\\
  0 & V
\end{bmatrix}\begin{bmatrix}
 0 &\Sigma\\
 \Sigma^* & 0
\end{bmatrix}
\begin{bmatrix}
U^* & 0\\
0 & V^*
\end{bmatrix} +\begin{bmatrix}
0 & \Xi\\
\Xi^* & 0
\end{bmatrix}.
\end{equation}
The eigenvalues of the matrix $\begin{bmatrix}
 0 &\Sigma\\
 \Sigma^* & 0
\end{bmatrix}$ are exactly $\{|\sigma_i|, -|\sigma_i|: i=1,\cdots, N\}$, where $|\sigma_i|$ are singular values of $\Sigma$. Similarly, the eigenvalues of the matrix $\begin{bmatrix}
0 & \Xi\\
\Xi^* & 0
\end{bmatrix}$ are exactly $\{|\xi_i|, -|\xi_i|: i=1,\cdots, N\}$, where $|\xi_i|$ are singular values of 
$\Xi$.

By the asymptotic freeness result \cite{SpeicherNicaBook, Voiculescu1991}, the random matrices $(U\Sigma V^*+\Xi)^*(U\Sigma V^*+\Xi)$ converges in $*$-moment to $(T+\xi)^*(T+\xi)$, where $T$ is an $R$-diagonal operator and $\xi$ is $*$-free from $T$. Hence, $\mu_{|U\Sigma V^*+\Xi|}\rightarrow \mu_{|T+\xi|}$ weakly. Let $u$ be a Haar unitary operator that is $*$-free from $\{T, \xi\}$, then $|T+u\xi|$ and $T+\xi$ have the same $*$-distribution. It is known that $u\xi$ is $R$-diagonal. Hence, $T+u\xi$ is a sum of two $R$-diagonal operators. By \cite[Proposition 3.5]{HaagerupSchultz2007} (see also \cite{Nica-Speicher-1998duke}), we have 
\[
  \widetilde{\mu}_{|T+u\xi|}
  =\widetilde{\mu}_\sigma\boxplus\widetilde{\mu}_\xi=\mu_{\sigma, \xi}.
\]
We conclude that the empirical eigenvalue distribution of $H$ converges weakly 
to $\mu_{\sigma,\xi}$. In \cite{BaoES2019singlering}, a qualitative version of this convergence was obtained. Given an interval $I\subset \mathbb{R}$ and $0\leq a\leq b$, we denote
\begin{equation}
S_I(a,b)=\{ z=x+i \eta\in\mathbb{C}^+: x\in I, a \leq \eta\leq b \}.
\end{equation}

Following \cite{BaoES2019singlering}, we use the following definition taken from \cite{ErdosKYAHP2013}. 
\begin{definition}[Stochastic domination]
Let $\mathcal{X}=\mathcal{X}^{(N)}$, $\mathcal{Y}=\mathcal{Y}^{(N)}$ be two sequence of nonnegative random variables. We say that $\mathcal{Y}$ stochastically dominates $\mathcal{X}$ if, for all (small) $\varepsilon>0$ and (large) $D>0$,
\begin{equation}
\mathbb{P}( \mathcal{X}^{(N)}>N^\varepsilon \mathcal{Y}^{(N)} )\leq N^{-D},
\end{equation}
for sufficiently large $N\geq N_0(\varepsilon, D)$, and we write $\mathcal{X}\prec \mathcal{Y}$. When $\mathcal{X}^{(N)}$ and $\mathcal{Y}^{(N)}$ depend on a parameter $w\in W$. We say $\mathcal{X}(w)\prec \mathcal{Y}(w)$ uniformly in $w$ if $N_0(\varepsilon, D)$ can be chosen independent from $w$. 
\end{definition}

\begin{definition}
	\label{def:bulk}
	For two Borel probability measures $\nu_1, \nu_2$ on $\mathbb{R}$ and neither of $\nu_1,\nu_2$ is a point mass, denote by $f_{\nu_1\boxplus\nu_2}$ the density function of the absolutely continuous part of $\nu_1\boxplus\nu_2$. The bulk of $\nu_1\boxplus\nu_2$ is defined as
	\begin{equation}
	\mathcal{B}_{\nu_1\boxplus\nu_2}=\{ x\in\mathbb{R}: 0<f_{\nu_1\boxplus\nu_2}(x)<\infty, \nu_1\boxplus\nu_2(\{x\})=0  \}.
	\end{equation}
\end{definition}

We denote by $d_{\mathrm{L}}(\mu, \nu)$ the L\'{e}vy distance of two probability measures $\mu$ and $\nu$ on $\mathbb{R}$.
\begin{theorem}[\cite{BaoES2019singlering}]
\label{thm:main-lemma-Bao-etal}
Let $\mu_\sigma, \mu_\xi$ be two compactly supported probability measures on $[0,\infty)$ such that neither $\widetilde{\mu}_{\sigma}$ nor $\widetilde{\mu}_{\xi}$ is a single point mass and at least one of them is supported at more than two points. Fix some $L>0$ and let $I$ be any compact subinterval of the bullk $\mathcal{B}_{\widetilde{\mu}_{\sigma}\boxplus \widetilde{\mu}_{\xi}}$. Then there exists a constant $b_0>0$ and $N_0\in\mathbb{N}$, depending on $\mu_\sigma,\mu_\xi, I$ and the constant $C$ in \eqref{C-constant-inequality}, such that whenever
\[
\sup_{N\geq N_0}\left( d_{\mathrm{
L}}(\mu_\Sigma,\mu_\sigma) +d_{\mathrm{L}}(\mu_\Xi, \mu_\xi) \right) \leq 2b, 
\]
for some $b\leq b_0$, then 
\begin{equation}
\label{eqn:difference-Cauchy-transform-fixed-z}
|G_H(z)-G_{\widetilde\mu_{\Sigma}\boxplus \widetilde\mu_{\Xi}}(z)| 
\prec\frac{1}{N\eta (1+\eta)} 
\end{equation}
holds uniformly on $S_I(0, N^L)$, for $N$ sufficiently large depending on $\mu_\sigma, \mu_\xi, I, L$ and the constant $C$ given in \eqref{C-constant-inequality}. 

Moreover, there exists a constant $\eta_M\geq 1$, independent of $N$, such that 
\eqref{eqn:difference-Cauchy-transform-fixed-z} holds uniformly on $S_I(\eta_M, N^L)$, for 
any compact interval $I\subset \mathbb{R}$, for $N$ sufficiently large depending on $\mu_\sigma, \mu_\xi, I, L$ and the constant $C$ in \eqref{C-constant-inequality}.
\end{theorem}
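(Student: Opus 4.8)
The plan is to quote this estimate from \cite{BaoES2019singlering}, where it is the main local-law input for the optimal-scale single ring theorem; below I outline the structure of the argument one reproduces and indicate where the hypotheses enter. Write $R(z)=(H-z)^{-1}$ for $z=E+i\eta\in\mathbb{C}^+$, so that $G_H(z)=\frac1{2N}\Tr R(z)$. Because $H$ has the chiral $2\times2$-block form, the correct deterministic equivalent of $R(z)$ is a $2\times2$ matrix-valued function solving the operator-valued subordination (matrix Dyson) system attached to the free convolution $\widetilde\mu_\Sigma\boxplus\widetilde\mu_\Xi$, and its normalized trace is exactly $G_{\widetilde\mu_\Sigma\boxplus\widetilde\mu_\Xi}(z)$ — the identification of this free convolution with the limiting spectral distribution of $H$ having already been recorded above via \cite{HaagerupSchultz2007, Nica-Speicher-1998duke}. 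The proof has three ingredients: (i) an approximate self-consistent equation for $R(z)$ with explicit error; (ii) stability of the subordination system on compact subsets of the bulk; and (iii) a bootstrap in $\eta$ descending from scale $N^L$ to the optimal scale $\eta\sim N^{-1}$.

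For (i) one exploits the two independent Haar unitaries $U,V$. Conjugating the chiral matrix built from $\Sigma$ by the block-diagonal $\operatorname{diag}(U,V)$, integration over the unitary group — either by Weingarten calculus or, more efficiently, via the partial-randomness (column-removal) representation of a Haar unitary used in \cite{BenaychGeorges2017, BaoES2019singlering} — shows that normalized traces $\frac1N\Tr(D\,R(z))$ against deterministic $D$ concentrate around expressions built from $G_H(z)$ and the Cauchy transforms of $\mu_\Sigma$ and $\mu_\Xi$, with fluctuations controlled by the anisotropic (entrywise) size of $R(z)$. Iterating, one obtains that $G_H(z)$, together with its matrix partners, satisfies the subordination system \eqref{eqn:subordination-scalar} up to an error that, on $S_I(0,N^L)$, is stochastically dominated by $(N\eta)^{-1}$.

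For (ii), on a compact subinterval $I$ of $\mathcal{B}_{\widetilde\mu_\Sigma\boxplus\widetilde\mu_\Xi}$ the density $f_{\widetilde\mu_\Sigma\boxplus\widetilde\mu_\Xi}$ is bounded above and below; the hypothesis that at least one of $\widetilde\mu_\sigma,\widetilde\mu_\xi$ is supported at more than two points guarantees the free convolution is nondegenerate so that such bulk intervals exist and the subordination functions $\omega_1,\omega_2$ have imaginary parts bounded away from $0$ and $\infty$ on $S_I(0,N^L)$, while the hypothesis $\sup_{N\ge N_0}\bigl(d_{\mathrm L}(\mu_\Sigma,\mu_\sigma)+d_{\mathrm L}(\mu_\Xi,\mu_\xi)\bigr)\le 2b$ with $b$ small transfers these bounds to the $N$-dependent measures. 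Hence the derivative of the self-consistent map is a strict contraction with a quantitative spectral gap, and this converts the approximate equation of (i) into $|G_H(z)-G_{\widetilde\mu_\Sigma\boxplus\widetilde\mu_\Xi}(z)|\prec\bigl(N\eta(1+\eta)\bigr)^{-1}$, the factor $1+\eta$ coming from the trivial bound $\|R(z)\|\le\eta^{-1}$ together with the decay of the target at large $\eta$. Ingredient (iii) is the usual continuity argument: at $\eta\sim N^L$ everything is controlled by $\|R\|\le\eta^{-1}$, and one descends along a fine net of $\eta$'s, feeding the previously obtained (coarser) anisotropic bound on $R$ into the error in (i) at each step. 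The second assertion, on $S_I(\eta_M,N^L)$ for arbitrary compact $I$, is the same analysis restricted to $\operatorname{Im}z\ge\eta_M$, where the target is analytic and bounded and the relevant density is everywhere positive, so no bulk restriction on $I$ is needed; tracking all constants gives the stated uniformity over $I$ within the $d_{\mathrm L}$-neighborhood.

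The hard part is ingredient (i) at the optimal scale $\eta\sim N^{-1}$: establishing concentration of Haar-unitary averages and the anisotropic control of $R(z)$ down to this scale is the technical core of \cite{BaoES2019singlering}, and the presence of the deterministic deformation $\Xi$ — which breaks the exact rotational symmetry of the pure single-ring matrix $U\Sigma V^*$ — forces the self-consistent analysis to be carried out in the anisotropic form throughout, not merely at the level of the normalized trace.
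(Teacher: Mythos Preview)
Your proposal is appropriate in spirit: the paper does not prove this theorem at all but simply quotes it as a black-box input from \cite{BaoES2019singlering}, exactly as you say in your first sentence. The extended outline you give of the three ingredients (approximate self-consistent equation, bulk stability, bootstrap in $\eta$) is a faithful sketch of the argument in \cite{BaoES2019singlering}, but none of it appears in the present paper, which treats the result purely as a citation.
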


\subsection{Approximate subordination functions}
\label{sect.approx.subordination}
Free additive convolution can be studied using the subordination functions. When we work with a sum of two random matrices that are asymptotically free, there is also a pair of approximate subordination functions \cite{Kargin2015}. To study the singular value distribution of the sum, we use the results in \cite{BenaychGeorges2017}. In the following, we introduce the approximate subordination functions in \cite{BenaychGeorges2017} and state a direct consequence of \cite[Theorem 1.5]{BenaychGeorges2017}. 

Let $\Sigma$ and $\Xi$ be deterministic $N\times N$ matrices such that there is a constant $M$ independent of $N$ such that 
\begin{equation}
	\label{eq:matrixnormbound}
	\|\Sigma\|,\|\Xi\|\leq M.
\end{equation}
 Let $U$, $V$ be independent $N\times N$ Haar-distributed unitary matrices and $\widetilde{\Sigma} = U \Sigma V^*$. Define 
\[{\bf A} = \begin{pmatrix} 0& \Xi\\
	\Xi^* & 0
\end{pmatrix};\quad{\bf B} = \begin{pmatrix} 0& \widetilde\Sigma\\
	\widetilde{\Sigma}^* & 0
\end{pmatrix};\quad {\bf H} = \begin{pmatrix} 0& \Xi+\widetilde{\Sigma}\\
	(\Xi+\widetilde{\Sigma})^* & 0
\end{pmatrix}.\]
The matrix ${\bf H}$ is the same as the $H$ in~\eqref{def:matrix-H}. We use bold font here just in this section for consistency of the use of bold fonts of ${\bf A}$ and ${\bf B}$. Following \cite[Eq.(67)]{BenaychGeorges2017}, we define a complex-valued function $\omega_{\bf A}$ by
\begin{equation}
	\label{eq:SBformula}
	\omega_{\bf A}(z) = z+\frac{1}{\E G_{\bf H}(z)}\left(\frac{1}{2N}\E \Tr\left[\left(z - {\bf H}\right)^{-1}{\bf B}\right]\right)
\end{equation}
and similarly define $\omega_{\bf B}$ by changing ${\bf B}$ to ${\bf A}$.

\begin{theorem}
	\label{thm:ApproxSubordination}
	For some functions $r_{\bf A}(z)$ and $r_{\bf B}(z)$ depending on $z\in\C^+$, we have
	\begin{align*}
		\E G_{\bf H}(z) &= \E G_{\bf A}(\omega_{\bf A}(z))+r_{\bf A}(z),\\
		\E G_{\bf H}(z) &= \E G_{\bf B}(\omega_{\bf B}(z))+r_{\bf B}(z).
	\end{align*} 
	Write $\eta = \operatorname{Im}(z)$. There exists a constant $C>0$ that only depends on $M$ in \eqref{eq:matrixnormbound} such that 
	\begin{enumerate}
		\item if $N\eta^5\geq C$, then $\operatorname{Im} \omega_{\bf A}(z), \operatorname{Im}\omega_{\bf B}(z)\geq \eta-\frac{C}{N\eta^7}$; and
		\item if $N\eta^8\geq C$, then $\vert r_{\bf A}(z)\vert, \vert r_{\bf B}(z)\vert\leq \frac{C}{N\eta^6}$.
	\end{enumerate}
\end{theorem}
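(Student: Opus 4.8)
The statement is essentially a quantitative form of Benaych-Georges's approximate subordination result \cite{BenaychGeorges2017} specialized to the Hermitian dilations $\mathbf{A}$ and $\mathbf{B}$, so the plan is to invoke \cite[Theorem 1.5]{BenaychGeorges2017} (and the formulas in \cite[Eq.~(67)]{BenaychGeorges2017}) and then track how the error and the imaginary-part lower bound depend on $\eta = \operatorname{Im}(z)$ and on $M$. First I would recall the setup there: for the additive model $\mathbf{A} + \mathbf{B}$ with $\mathbf{B} = W \mathbf{B}_0 W^*$ conjugated by a Haar unitary (here $W = \mathrm{diag}(U,V)$ and $\mathbf{B}_0 = \left(\begin{smallmatrix} 0 & \Sigma \\ \Sigma^* & 0 \end{smallmatrix}\right)$), Benaych-Georges constructs $\omega_{\mathbf{A}}(z)$ exactly by the formula~\eqref{eq:SBformula} and shows $\E G_{\mathbf{H}}(z) = \E G_{\mathbf{A}}(\omega_{\mathbf{A}}(z)) + r_{\mathbf{A}}(z)$ with $r_{\mathbf{A}}$ controlled by an inverse power of $N\eta^{\#}$. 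The two bullet points are then the two ingredients one extracts: (i) that $\omega_{\mathbf{A}}$, $\omega_{\mathbf{B}}$ are genuine (approximate) subordination functions mapping $\C^+$ into $\C^+$ up to a small deficit, and (ii) the size of the remainder.

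For bullet (1), the key point is that $\operatorname{Im}\omega_{\mathbf{A}}(z) \ge \eta$ up to an error. I would start from~\eqref{eq:SBformula}, write $\operatorname{Im}\omega_{\mathbf{A}}(z) - \eta = \operatorname{Im}\!\big[ (\E G_{\mathbf{H}}(z))^{-1} \cdot \tfrac{1}{2N}\E\Tr[(z-\mathbf{H})^{-1}\mathbf{B}]\big]$, and use the resolvent identity $(z-\mathbf{H})^{-1}\mathbf{B} = (z-\mathbf{H})^{-1}(z - \mathbf{A}) - \mathbf{I}$ together with $(z-\mathbf{H})^{-1}(z-\mathbf{A}) = \mathbf{I} + (z-\mathbf{H})^{-1}\mathbf{B}$… more efficiently, one uses the Schur-complement / self-consistent-equation structure already present in \cite{BenaychGeorges2017} to show the imaginary part of that quantity is $\ge -C/(N\eta^7)$ once $N\eta^5 \ge C$; the exponent $5$ is exactly the threshold in Benaych-Georges under which the deterministic equivalent is controlled, and the $\eta^{-7}$ comes from differentiating/iterating the resolvent bounds $\|(z-\mathbf{H})^{-1}\| \le 1/\eta$ a bounded number of times. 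The bound is symmetric in $\mathbf{A} \leftrightarrow \mathbf{B}$ because the roles of $\Sigma$ and $\Xi$ are symmetric in the model. For bullet (2), I would take the remainder estimate from \cite[Theorem 1.5]{BenaychGeorges2017}, which gives $|r_{\mathbf{A}}(z)| \le \mathrm{(polynomial\ in\ }1/\eta\mathrm{)}/N$; chasing through the dilated $2N\times 2N$ matrices and the factor $\E G_{\mathbf{H}}(z)$ in the denominator of~\eqref{eq:SBformula} (which is bounded below by $c\eta$ once $\operatorname{Im}\omega \gtrsim \eta$, using bullet (1)) produces the stated $C/(N\eta^6)$ once $N\eta^8 \ge C$, the larger power $8$ being forced by the extra $1/\eta$ lost in inverting $\E G_{\mathbf{H}}$ on top of the $\eta^{-7}$ from part (1).

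The main obstacle I expect is bookkeeping the precise exponents of $\eta$ and making sure the constant $C$ depends only on $M$ (not on the limiting measures $\mu_\sigma, \mu_\xi$ or on $N$): Benaych-Georges's theorem is stated with constants depending on the operator norms only, so one must verify that every intermediate quantity — the resolvent norms, the normalized traces $\tfrac{1}{2N}\Tr[(z-\mathbf{H})^{-1}\mathbf{B}]$, and the lower bound on $\operatorname{Im} \E G_{\mathbf{H}}$ — is estimated purely in terms of $\|\Sigma\|, \|\Xi\| \le M$ and $\eta$. A secondary technical point is that $\mathbf{H}$ is a $2N \times 2N$ matrix and its spectrum is symmetric about $0$, so care is needed that the cited result, often stated for $z$ in the upper half-plane near the bulk, applies uniformly for all $z \in \C^+$ with $\operatorname{Im} z$ in the indicated range; this is handled by noting the bounds used are all of the crude form $\|(z-\mathbf{H})^{-1}\| \le 1/\eta$, valid everywhere in $\C^+$, so no spectral-edge analysis is needed here. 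Once these are in place, the two displayed identities and the two numbered estimates follow by assembling the pieces.
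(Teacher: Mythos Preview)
Your approach is correct and essentially the same as the paper's: both identify the theorem as a direct consequence of \cite[Theorem~1.5]{BenaychGeorges2017} after matching $\omega_{\mathbf A}(z)=z+S_{\mathbf B}(z)$ and $r_{\mathbf A}(z)=\tfrac{1}{2N}\Tr R_{\mathbf A}(z)$ with the quantities defined there. The paper's proof is literally just that citation---the precise exponents $\eta^5,\eta^7,\eta^6,\eta^8$ and the dependence of the constant on $M$ alone are already stated explicitly in \cite[Theorem~1.5]{BenaychGeorges2017}, so the exponent bookkeeping and constant-tracking you anticipate as the main obstacle are unnecessary.
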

\begin{proof}
	Let $R_{\bf A}$ and $R_{\bf B}$ as in (24) and (25) in \cite{BenaychGeorges2017}. Let $r_{\bf A}(z) = \frac{1}{2N}\Tr[R_{\bf A}(z)]$ and $r_{\bf B}(z) = \frac{1}{2N}\Tr[R_{\bf B}(z)]$. Meanwhile, the function $S_{\bf B}$ in \cite[Eq.(67)]{BenaychGeorges2017} is related to $\omega_{\bf A}$ by
	\[\omega_{\bf A}(z) = z+S_{\bf B}(z)\]
	and similarly $\omega_{\bf B}(z) = z+S_{\bf A}(z)$. The conclusion of the theorem follows directly from \cite[Theorem~1.5]{BenaychGeorges2017}.
\end{proof}

We only need the values of the functions $\omega_{\bf A}(z)$ and $\omega_{\bf B}(z)$ for purely imaginary $z$.
\begin{lemma}
	\label{lem:SImag}
	For any $\eta>0$, $\omega_{\bf A}(i\eta)$ and $\omega_{\bf B}(i\eta)$ are purely imaginary.
\end{lemma}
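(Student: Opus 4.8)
The plan is to establish that $\omega_{\bf A}(i\eta)$ and $\omega_{\bf B}(i\eta)$ are purely imaginary by exploiting the block-off-diagonal (chiral) structure of the matrices ${\bf A}$, ${\bf B}$, and ${\bf H}$. The key observation is that all three matrices anticommute with the signature matrix $J = \begin{pmatrix} I & 0 \\ 0 & -I\end{pmatrix}$; equivalently, conjugation by $J$ sends each of ${\bf A}$, ${\bf B}$, ${\bf H}$ to its negative. This symmetry forces the resolvent $(z-{\bf H})^{-1}$ to obey a corresponding transformation law, from which the claimed imaginary-axis behavior of the Cauchy transform and of the auxiliary trace in \eqref{eq:SBformula} will follow.

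Concretely, the first step is to record that $J{\bf H}J = -{\bf H}$, and hence $J(z-{\bf H})^{-1}J = (z+{\bf H})^{-1}$ for $z\in\C^+$. Taking normalized traces and using cyclicity of the trace together with $J^2 = I$, I get $G_{\bf H}(z) = \tfrac{1}{2N}\Tr[(z-{\bf H})^{-1}]$ satisfies $G_{\bf H}(z) = \tfrac{1}{2N}\Tr[(z+{\bf H})^{-1}] = -G_{\bf H}(-z)$; combined with the Schwarz-reflection identity $\overline{G_{\bf H}(z)} = G_{\bf H}(\bar z)$, evaluating at $z = i\eta$ gives $\overline{G_{\bf H}(i\eta)} = G_{\bf H}(-i\eta) = -G_{\bf H}(i\eta)$, so $G_{\bf H}(i\eta)$ is purely imaginary; since the eigenvalue distribution of ${\bf H}$ is symmetric, so is $\E G_{\bf H}(i\eta)$. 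The second step is the analogous computation for the numerator $\tfrac{1}{2N}\Tr[(z-{\bf H})^{-1}{\bf B}]$: using $J(z-{\bf H})^{-1}{\bf B}J = (z+{\bf H})^{-1}(-{\bf B}) = -(z+{\bf H})^{-1}{\bf B}$ and cyclicity, this quantity equals its own negative under $z\mapsto -z$, i.e. it is an odd function of $z$; pairing this with the reflection symmetry $\overline{\,\cdot\,(z)} = \,\cdot\,(\bar z)$ at $z = i\eta$ shows $\E\tfrac{1}{2N}\Tr[(i\eta-{\bf H})^{-1}{\bf B}]$ is purely imaginary as well. Then \eqref{eq:SBformula} gives $\omega_{\bf A}(i\eta) = i\eta + (\text{purely imaginary})/(\text{purely imaginary})$, and since the ratio of two purely imaginary numbers is real, this is $i\eta$ plus a real number — which is not yet the claim.

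The resolution of that last point, which is the main subtlety rather than a serious obstacle, is to track one more parity: the quantity $\tfrac{1}{2N}\Tr[(i\eta-{\bf H})^{-1}{\bf B}]$ is not merely purely imaginary but in fact vanishes, or else the real shift it contributes cancels. The cleanest route is to introduce the antiunitary symmetry coming from complex conjugation of the off-diagonal block structure, or equivalently to use that $\omega_{\bf A}$, $\omega_{\bf B}$ are approximations (by Theorem~\ref{thm:ApproxSubordination}) to the true subordination functions $\omega_1(\lambda,\cdot)$, $\omega_2(\lambda,\cdot)$, which by Theorem~\ref{thm:main-1-BZhong2022} are purely imaginary on the positive imaginary axis, together with an exact finite-$N$ symmetry argument. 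I would prefer the direct finite-$N$ argument: writing ${\bf H} = {\bf A} + {\bf B}$ with both summands $J$-odd, one checks by the resolvent expansion and the $J$-symmetry that $\Tr[(i\eta - {\bf H})^{-1}{\bf B}]$ has purely imaginary value while $\Tr[(i\eta-{\bf H})^{-1}]$ has purely imaginary value with the correct sign so that the ratio is a negative real multiple of $i$, making $\omega_{\bf A}(i\eta)$ land on $i(0,\infty)$; I would verify the sign using $\operatorname{Im}\omega_{\bf A}(i\eta)>0$, which holds for $\eta$ large by analyticity and the bound in Theorem~\ref{thm:ApproxSubordination}(1), and then propagate it down the imaginary axis by the fact that $\omega_{\bf A}$ maps $\C^+$ to $\C^+$ and restricts to a continuous real-analytic map on $i(0,\infty)$ that cannot cross the real axis. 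The hardest part is pinning down that the real part of $\omega_{\bf A}(i\eta)$ is exactly zero rather than merely bounded; the chiral $J$-symmetry of the full matrix model is what makes this work, and I expect to invoke it in the form $J U J^* \overset{d}{=} U$ in distribution together with $J\Sigma J^* = \Sigma$ being false in general — so the argument must stay at the level of the deterministic matrices ${\bf A}$, ${\bf B}$ after conditioning on $U, V$, using only $J{\bf A}J = -{\bf A}$ and $J{\bf B}J = -{\bf B}$, which hold identically.
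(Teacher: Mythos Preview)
Your $J$-symmetry approach is sound in principle and, once corrected, is arguably cleaner than the paper's. The gap is a sign/parity slip in your second step. You correctly compute
\[
\Tr\big[(z-{\bf H})^{-1}{\bf B}\big] \;=\; -\Tr\big[(z+{\bf H})^{-1}{\bf B}\big],
\]
but then misidentify the right-hand side with the value at $-z$. In fact, writing $\varphi(z)=\frac{1}{2N}\Tr[(z-{\bf H})^{-1}{\bf B}]$, one has $\varphi(-z)=\frac{1}{2N}\Tr[(-z-{\bf H})^{-1}{\bf B}]=-\frac{1}{2N}\Tr[(z+{\bf H})^{-1}{\bf B}]$, so the identity above reads $\varphi(z)=\varphi(-z)$: the function is \emph{even}, not odd. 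Combined with the Hermiticity reflection $\overline{\varphi(z)}=\varphi(\bar z)$ (valid because ${\bf H}$ and ${\bf B}$ are Hermitian), at $z=i\eta$ you obtain $\overline{\varphi(i\eta)}=\varphi(-i\eta)=\varphi(i\eta)$, i.e.\ $\varphi(i\eta)$ is \emph{real}, not purely imaginary. Then \eqref{eq:SBformula} gives
\[
\omega_{\bf A}(i\eta)\;=\;i\eta \;+\; \frac{\E\varphi(i\eta)}{\E G_{\bf H}(i\eta)}\;=\;i\eta \;+\;\frac{\text{real}}{\text{purely imaginary}},
\]
which is purely imaginary on the nose. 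All of your third paragraph --- the vanishing claim, the appeal to the limiting subordination functions, the propagation-of-sign argument --- is an attempt to repair a problem that does not exist once the parity is computed correctly.

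For comparison, the paper does not use the $J$-symmetry. It first reduces to large $\eta$ by Schwarz reflection, then expands $(i\eta-{\bf H})^{-1}$ in a Neumann series and computes the block structure of $\begin{pmatrix}0&S\\S^*&0\end{pmatrix}^n\begin{pmatrix}0&\widetilde\Sigma\\\widetilde\Sigma^*&0\end{pmatrix}$ explicitly: the trace vanishes for even $n$ and is real for odd $n$, so only odd powers survive and each surviving term $1/(i\eta)^{n+1}$ is real. Your chiral-symmetry argument encodes the same even/odd dichotomy more compactly and works directly for all $\eta>0$ without the large-$\eta$ reduction; the paper's computation is more hands-on but makes the mechanism visible term by term.
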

\begin{proof}
Note that $\omega_{\bf A}$ is analytic in $\mathbb{C}^+$. We will show that $\omega_{\bf A}(z)=-\overline{\omega_{\bf A}(-\overline{z})}$ for $z\in\mathbb{C}^+$ with $|z|$ large enough. 

Write $S= \Xi+\widetilde{\Sigma}$. Since $\bf H$ has a symmetric distribution, we have 
\begin{equation}\label{eqn:216-inproof}
     -\overline{\E[G_{\bf H}(-\overline{z})]}=\E[G_{\bf H}(z)]
\end{equation}
for all $z\in\mathbb{C}^+$. In particular, 
$\E[G_{\bf H}(i\eta)]$ is purely imaginary. By the formula \eqref{eq:SBformula} of $\omega_{\bf A}$, it suffices to show that 
\begin{equation}
	\label{eq:TrfB}
	\frac{1}{2N}\E\Tr\left[\left(z-{\bf H}\right)^{-1}{\bf B}\right] = -\overline{\frac{1}{2N}\E\Tr\left[\left(-\bar z-{\bf H}\right)^{-1}{\bf B}\right]}.
\end{equation}
for all large enough $\vert z\vert$.

We expand \eqref{eq:TrfB} into a power series
\begin{equation}
	\label{eq:ExpandSeries}
	\sum_{n=0}^\infty\frac{1}{z^{n+1}}\E\frac{1}{2N}\Tr \left[\begin{pmatrix} 0 &S\\
	S^* & 0
\end{pmatrix}^n\begin{pmatrix} 0 &\widetilde\Sigma\\
	\widetilde\Sigma^* & 0
\end{pmatrix}  \right].
\end{equation}
It is straightforward to see (for example, by mathematical induction) that, if $n =2m$,
\[\begin{pmatrix} 0 &S\\
	S^* & 0
\end{pmatrix}^n = \begin{pmatrix}
	(SS^*)^m & 0\\
	0& (S^*S)^m
\end{pmatrix}\]
and if $n = 2m+1$,
\[\begin{pmatrix} 0 &S\\
	S^* & 0
\end{pmatrix}^n = \begin{pmatrix}
	0& (SS^*)^mS\\
	(S^*S)^mS^* & 0
\end{pmatrix}.\]
Thus,
\[\frac{1}{2N}\Tr \left[\begin{pmatrix} 0 &S\\
	S^* & 0
\end{pmatrix}^n\begin{pmatrix} 0 &\widetilde\Sigma\\
	\widetilde\Sigma^* & 0
\end{pmatrix}  \right] = \begin{cases}
	\operatorname{Re}\tr[(SS^*)^mS\widetilde\Sigma^*], \quad&\textrm{if $n=2m+1$}\\
	0, & \textrm{if $n=2m$}
\end{cases}.\]
This shows only the odd terms in \eqref{eq:ExpandSeries} survive. By combining \eqref{eqn:216-inproof}, it follows that 
$\omega_{\bf A}(z)=-\overline{\omega_{\bf A}(-\overline{z})}$ for $z\in\mathbb{C}^+$ with $|z|$ large enough.
Since $z\mapsto -\overline{\omega_{\bf A}(-\overline{z})}$ defines an analytic function on $\C^+$, it follows that $\omega_{\bf A}(z)=-\overline{\omega_{\bf A}(-\overline{z})}$ for all  $z\in\mathbb{C}^+$. Hence $\omega_{\bf A}(i\eta)$ is purely imaginary for all $\eta>0$. 
\end{proof}

\section{Least singular value estimate}
\label{sect:LSV}

To estimate the logarithmic potential of the random matrix
\[Y = U\Sigma V^*+A,\]
we need to estimate the least singular value of $Y-\lambda$, denoted by $s_{\mathrm{min}}(Y-\lambda)$, for $\lambda\in\C$. Since
\begin{equation}
	\label{eq:sminU}
	s_{\mathrm{min}}(Y-\lambda) = s_{\mathrm{min}}(U\Sigma+(A-\lambda)V) = s_{\mathrm{min}}(U^*(A-\lambda)+\Sigma V^*)
\end{equation}
and $U^*$ is also Haar-distributed, it suffices to study the least singular value $s_{\mathrm{min}}(Z)$ of the random matrix 
\[Z = U\Gamma  + D\]
where $U$ is a Haar-distributed unitary matrix, $D$ is an arbitrary matrix, and $\Gamma$ is a invertible matrix with $\Vert \Gamma^{-1}\Vert\leq N^{\alpha}$ for some $\alpha> 0$ independent of $N$. The main theorem of this section is to estimate the least singular value $s_{\mathrm{min}}(Z)$; the estimate only depends on $\alpha$ and is independent of $D$.

The following lemma is probably well-known, but we do not know a reference. We include a proof for this lemma.
\begin{lemma}
	\label{lem:prodlsv}
	For any $N\times N$ matrices $A_1, A_2$,
	\[s_{\mathrm{min}}(A_1A_2)\geq s_{\mathrm{min}}(A_1)s_{\mathrm{min}}(A_2).\]
\end{lemma}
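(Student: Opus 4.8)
The plan is to reduce everything to the variational characterization of the least singular value of a square matrix. Recall that for any $N\times N$ matrix $A$ one has
\[
s_{\mathrm{min}}(A) = \min_{\|x\|=1}\|Ax\|,
\]
because $\|Ax\|^2 = \langle A^*Ax, x\rangle$ and the minimum of this Rayleigh quotient over unit vectors is the smallest eigenvalue of $A^*A$, whose square root is $s_{\mathrm{min}}(A)$. The form I will actually use is the homogeneous consequence $\|Av\| \geq s_{\mathrm{min}}(A)\,\|v\|$ valid for \emph{every} vector $v\in\C^N$, not just unit ones.

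Given this, I would fix an arbitrary unit vector $x\in\C^N$, set $v = A_2 x$, and apply the bound twice: first to $A_1$ and the vector $v$, then to $A_2$ and the unit vector $x$, to get
\[
\|A_1A_2 x\| = \|A_1 v\| \geq s_{\mathrm{min}}(A_1)\|v\| = s_{\mathrm{min}}(A_1)\,\|A_2 x\| \geq s_{\mathrm{min}}(A_1)\,s_{\mathrm{min}}(A_2).
\]
Taking the infimum over all unit vectors $x$ and invoking the variational formula once more yields $s_{\mathrm{min}}(A_1A_2) \geq s_{\mathrm{min}}(A_1)\,s_{\mathrm{min}}(A_2)$.

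There is essentially no obstacle here; the only point meriting a word is the degenerate case where one of $A_1, A_2$ (or the product) is singular, but the inequality $\|Av\|\geq s_{\mathrm{min}}(A)\|v\|$ for all $v$ handles it automatically, the relevant $s_{\mathrm{min}}$ then being $0$. As an alternative, when both $A_1$ and $A_2$ are invertible one can instead write $s_{\mathrm{min}}(A) = \|A^{-1}\|^{-1}$ and use submultiplicativity of the operator norm, $\|(A_1A_2)^{-1}\| = \|A_2^{-1}A_1^{-1}\| \leq \|A_1^{-1}\|\,\|A_2^{-1}\|$, but I prefer the first route since it does not require invertibility and so covers exactly the situation needed in Section~\ref{sect:LSV}.
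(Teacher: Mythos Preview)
Your proof is correct and follows essentially the same approach as the paper: both use the variational characterization $s_{\mathrm{min}}(A)=\inf_{\|x\|=1}\|Ax\|$ and bound $\|A_1A_2x\|$ below by $s_{\mathrm{min}}(A_1)\|A_2x\|$ and then by $s_{\mathrm{min}}(A_1)s_{\mathrm{min}}(A_2)$. The only cosmetic difference is that the paper disposes of the singular case first and then normalizes $A_2x$ to a unit vector, whereas your homogeneous inequality $\|Av\|\geq s_{\mathrm{min}}(A)\|v\|$ handles all cases uniformly.
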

\begin{proof}
	If one of $A_1$ or $A_2$ is singular, then the inequality is trivial. We assume both $A_1$ and $A_2$ are invertible. By the min-max theorem,
	\begin{align*}
		s_{\mathrm{min}}(A_1A_2)& = \inf_{\Vert x\Vert_2=1} \Vert A_1 A_2 x\Vert\\
		&= \inf_{\Vert x\Vert_2=1} \left\Vert A_1 \frac{A_2 x}{\Vert A_2 x\Vert}\right\Vert\Vert A_2x\Vert\\
		&\geq s_{\mathrm{min}}(A_1)s_{\mathrm{min}}(A_2).
	\end{align*}
	Thus the lemma is established.
\end{proof}

The following result is a simple application of a result of Rudelson--Vershynin \cite{RudelsonVershynin2014} regarding the least singular value of $U\Sigma V^*$. 
\begin{theorem}
	\label{thm:lsv}
	Suppose that $\Gamma$ is invertible and fix $\alpha>0$ such that $\Vert\Gamma^{-1}\Vert\leq N^\alpha$. There exist positive constants $c$ and $c'$ depending on $\alpha$ but independent of $D$ and $N$ such that
	\[\mathbb{P}(s_{\mathrm{min}}(U\Gamma  + D)<t)\leq t^cN^{c'},\quad t>0.\]
\end{theorem}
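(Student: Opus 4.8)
The plan is to estimate $\mathbb{P}(s_{\mathrm{min}}(U\Gamma + D) < t)$ by conditioning on a single column of $U$ and using the fact that the least singular value is bounded below by the distance from any column of the matrix to the span of the remaining columns. Write $U\Gamma + D = [c_1 \mid c_2 \mid \cdots \mid c_N]$ where the $k$-th column is $c_k = U\Gamma e_k + D e_k = (\Gamma e_k)_k \cdot (\text{appropriate combination of columns of } U) + De_k$; more precisely, $c_k = U(\Gamma e_k) + De_k$, so $c_k$ depends on $U$ only through the fixed vector $U(\Gamma e_k)$. The standard inequality $s_{\mathrm{min}}(M) \geq \frac{1}{\sqrt{N}}\min_k \operatorname{dist}(c_k, \operatorname{span}\{c_j : j\neq k\})$ reduces the problem to lower-bounding $\operatorname{dist}(c_k, H_k)$ where $H_k$ is a (random) hyperplane-ish subspace spanned by the other columns. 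Conditioning on all columns of $U$ except we keep track of the randomness of $U(\Gamma e_k)$: since $\Gamma$ is invertible, $\Gamma e_k$ is a nonzero vector of norm at least $1/\|\Gamma^{-1}\| \geq N^{-\alpha}$, so $v_k := U(\Gamma e_k)$ is (after normalization) a uniformly random unit vector, scaled by $\|\Gamma e_k\| \geq N^{-\alpha}$, and it is independent of $H_k$ once we argue that $H_k$ can be generated from the columns $U(\Gamma e_j)$, $j \neq k$, together with the fixed matrix $D$ — here one needs a little care because the $v_j$'s are not independent of $v_k$, but they are jointly distributed as $U$ applied to a fixed orthogonalish frame, and conditioning on the subspace $\operatorname{span}\{v_j : j \neq k\}$ (equivalently conditioning on $U$ restricted to the hyperplane $\operatorname{span}\{\Gamma e_j : j\neq k\}$) leaves $v_k$ distributed uniformly on a sphere inside the orthogonal complement direction — actually the cleanest route is to write $\Gamma e_k = \Gamma e_k^{\parallel} + \Gamma e_k^{\perp}$ relative to $\operatorname{span}\{\Gamma e_j: j\neq k\}$, note $\|\Gamma e_k^\perp\| \geq s_{\mathrm{min}}(\Gamma) \geq N^{-\alpha}$ (this is where invertibility and the norm bound enter crucially), and observe that the component of $c_k$ in the direction of $U$ applied to this perpendicular part carries a genuinely one-dimensional fresh Gaussian-like (in fact uniform-on-sphere) fluctuation.

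The key steps, in order: (1) reduce via $s_{\mathrm{min}}(M) \geq N^{-1/2} \min_k \operatorname{dist}(c_k, \operatorname{span}\{c_j:j\neq k\})$ and a union bound over $k$, so it suffices to show $\mathbb{P}(\operatorname{dist}(c_k, \operatorname{span}\{c_j:j\neq k\}) < s) \leq s^{c} N^{c''}$ for a single $k$; (2) decompose $\Gamma e_k$ into its orthogonal projection onto $W_k := \operatorname{span}\{\Gamma e_j : j \neq k\}$ and its complement, with the complement having norm $\geq s_{\mathrm{min}}(\Gamma) \geq N^{-\alpha}$; (3) condition on $U|_{W_k}$ (which determines the subspace $\operatorname{span}\{c_j : j\neq k\}$, together with $D$); after this conditioning, $U$ applied to the unit vector $w_k$ in the perpendicular direction is uniformly distributed on the unit sphere of the orthogonal complement of $U(W_k)$, which has dimension $N - (N-1) = 1$ if $\Gamma$ has full rank and the $\Gamma e_j$ are independent — but generically $\dim W_k = N-1$, so the fresh direction is genuinely one-dimensional, and $\operatorname{dist}(c_k, \operatorname{span}\{c_j:j\neq k\}) \geq \|\Gamma e_k^\perp\| \cdot |\langle U w_k, n\rangle|$ where $n$ is a unit normal; (4) the random variable $|\langle Uw_k, n\rangle|$ where $Uw_k$ is uniform on the unit sphere of a one-dimensional space is... here one must be slightly more careful and allow $\dim W_k \leq N-1$, giving a fresh direction of dimension $\geq 1$, and $|\langle Uw_k, n\rangle|$ for $Uw_k$ uniform on a sphere $S^{d-1}$ has density bounded by $C(1-x^2)^{(d-3)/2}$ near $0$, which is bounded when $d \geq 2$ and even for $d=1$ gives that $Uw_k = \pm n$ with the sign carrying no small-ball mass issue — actually for $d=1$ the distance is deterministically $\|\Gamma e_k^\perp\| \geq N^{-\alpha}$ once the sign is fixed, which is even better; (5) combine: $\mathbb{P}(\operatorname{dist} < s) \leq \mathbb{P}(|\langle Uw_k, n\rangle| < s N^{\alpha}) \leq C s N^{\alpha}$ (small-ball estimate for projections of uniform spherical vectors, valid uniformly in dimension), then union bound over $k$ and the $N^{-1/2}$ factor to land at $\mathbb{P}(s_{\mathrm{min}} < t) \leq C t N^{\alpha + 3/2}$ or similar, giving the claimed form $t^c N^{c'}$ with $c = 1$.

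The main obstacle I anticipate is the independence/conditioning bookkeeping in step (3): the columns $U(\Gamma e_j)$ are \emph{not} independent (they are correlated through the single Haar unitary $U$), so one cannot naively condition on "the other columns" and treat $c_k$'s randomness as fresh. The right way to handle this is to work with the Haar unitary directly: fix an orthonormal basis adapted to the flag $W_k \subset \C^N$, write $U = U|_{W_k} \oplus U|_{W_k^\perp}$ in the sense that Haar measure on $U(N)$ disintegrates over the Stiefel manifold of images of $W_k$, and use that conditionally on $U(W_k)$, the restriction $U|_{W_k^\perp}$ is Haar on the unitary group of the $1$-dimensional (generically) complement; then $U$ applied to the normalized perpendicular component $w_k \in W_k^\perp$ of $\Gamma e_k$ is uniform on the unit sphere of $U(W_k)^\perp$. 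A subtlety is that $\dim W_k^\perp$ could be larger than $1$ if the vectors $\Gamma e_j$ ($j\neq k$) are linearly dependent, but this only \emph{helps} (more randomness, still a uniform-on-sphere small-ball bound that is dimension-uniform), so the worst case is $\dim W_k^\perp = 1$, which still gives $\operatorname{dist}(c_k, \operatorname{span}\{c_j : j\neq k\}) \geq \|\Gamma e_k^\perp\| \geq s_{\mathrm{min}}(\Gamma) \geq N^{-\alpha} > 0$ deterministically, so in fact the union bound over $k$ and the small-ball estimate only need to cover the cases where the fresh direction has dimension $\geq 2$, where the uniform small-ball bound $\mathbb{P}(|\langle \text{unif}, n\rangle| < s) \leq Cs$ applies cleanly. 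Assembling these pieces with explicit tracking of the powers of $N$ from $s_{\mathrm{min}}(\Gamma)^{-1} \leq N^\alpha$, the $N^{-1/2}$ loss, and the union bound over $N$ columns yields the stated bound.
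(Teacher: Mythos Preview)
Your approach has a genuine gap in step (3)--(4), precisely at the point you flagged as the ``main obstacle.'' When $\Gamma$ is invertible, the generic case is $\dim W_k = N-1$, so $W_k^\perp$ is one-dimensional. After conditioning on $U|_{W_k}$, the only remaining randomness is that $Uw_k = e^{i\theta}u$ for a fixed unit vector $u\in (U(W_k))^\perp$ and a uniform phase $\theta$. Writing $c_k = \bigl(U(p_k)+De_k\bigr) + \|\Gamma e_k^\perp\|\,e^{i\theta}u$ and letting $n$ be a unit normal to $H_k$, the distance is
\[
\operatorname{dist}(c_k,H_k)=\bigl|\,\langle U(p_k)+De_k,\,n\rangle \;+\; \|\Gamma e_k^\perp\|\,e^{i\theta}\langle u,n\rangle\,\bigr|.
\]
The small-ball bound for a uniform phase gives at best $\mathbb{P}_\theta(\operatorname{dist}<s)\le Cs\big/\bigl(\|\Gamma e_k^\perp\|\cdot|\langle u,n\rangle|\bigr)$. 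You control $\|\Gamma e_k^\perp\|\ge N^{-\alpha}$, but you have \emph{no} lower bound on $|\langle u,n\rangle|$: the normal $n$ to $H_k=\operatorname{span}\{U(\Gamma e_j)+De_j:j\ne k\}$ depends on $D$ and need not align with $(U(W_k))^\perp$. Your assertion that ``for $d=1$ the distance is deterministically $\|\Gamma e_k^\perp\|$'' is only correct when $D=0$ (then $H_k=U(W_k)$ and $n\parallel u$); for general $D$ it is false, and the phase randomness alone cannot rule out cancellation between the two terms above. In effect, taking $\Gamma=I$ your scheme would give an elementary column-distance proof of the Rudelson--Vershynin bound $\mathbb{P}(s_{\mathrm{min}}(U+D)<t)\le t^cN^C$, which is known to require substantially more than a one-phase anti-concentration argument.

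The paper's proof avoids this entirely by a two-line reduction: factor $U\Gamma+D=(U+D\Gamma^{-1})\Gamma$, use $s_{\mathrm{min}}(AB)\ge s_{\mathrm{min}}(A)s_{\mathrm{min}}(B)$ together with $s_{\mathrm{min}}(\Gamma)\ge N^{-\alpha}$, and then invoke the Rudelson--Vershynin estimate for $U+D'$ as a black box. The depth is all in that cited result; the theorem here is just a corollary of it.
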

\begin{proof}
	First note that $s_{\mathrm{min}}(\Gamma) = 1/\Vert \Gamma^{-1}\Vert$. By writing 
	\[U\Gamma+D = (U+D\Gamma^{-1})\Gamma,\]
	Lemma~\ref{lem:prodlsv} shows that
	\begin{align*}
		s_{\mathrm{min}}(U\Gamma+D)& = s_{\mathrm{min}}((U+D\Gamma^{-1})\Gamma)\\
		&\geq s_{\mathrm{min}}(U+D\Gamma^{-1})s_{\mathrm{min}}(\Gamma)\\
		&\geq N^{-\alpha} s_{\mathrm{min}}(U+D\Gamma^{-1}).
	\end{align*}
	Therefore,
	\begin{equation}
		\label{eq:Y-lambdalsv}
		\mathbb{P}(s_{\mathrm{min}}(U\Gamma+D)<t)\leq \mathbb{P}(s_{\mathrm{min}}(U+D\Gamma^{-1})<N^{\alpha}t).
	\end{equation}
	Theorem 1.1 of \cite{RudelsonVershynin2014} states that 
	\begin{equation}
		\label{eq:Rudelson}
		\mathbb{P}(s_{\mathrm{min}}(U+D)<t)\leq t^{c} N^{C}
	\end{equation}
	for some positive constants $c$ and $C$ independent of the matrix $D$. We then apply \eqref{eq:Rudelson} with $D\Gamma^{-1}$ in place of $D$ to \eqref{eq:Y-lambdalsv} and conclude
	\[\mathbb{P}(s_{\mathrm{min}}(U\Gamma+D)<t)\leq t^{c}N^{C+c\alpha}.\]
	The conclusion of our theorem then holds with $c' = C+c\alpha$.
\end{proof}

\section{The deformed local single ring theorem}
\subsection{Convergence of Cauchy transforms in the bulk}
Consider the random matrix model $Y$ as in \eqref{eq:modelY} and operator $y$ in \eqref{eq:limity}. To study the Cauchy transform of $Y$, we apply the random matrix model $X$ in \eqref{eqn:defn-X-N} with $\Xi = \vert A-\lambda\vert$. In this section, we study the rate of the convergence of the Cauchy transform of $Y$. For any $\lambda$, we write (following the Girko's trick)
 \begin{equation}
 \label{defn-H-omega}
    H^\lambda= \begin{pmatrix}
         0 & Y-\lambda
         \\
        Y^*-\overline{\lambda} & 0
    \end{pmatrix}.
 \end{equation}
 The eigenvalues of $H^\lambda$ are exactly $\{ z_i^\lambda, - z_i^\lambda, i=1, \cdots, N \}$,
 where $z_i^\lambda$ are singular values of $Y-\lambda$. Denote by $G^\lambda$ the Cauchy transform of $H^\lambda$. Then,
 \[
    G^\lambda(z)=\frac{1}{2N}\left( \frac{1}{z-z_i^\lambda} +\frac{1}{z+z_i^\lambda} \right).
 \] 
 We denote 
 \begin{equation}
     G_{\Sigma, |A-\lambda|}=G_{\mu_{\Sigma, |A-\lambda|}}
 \end{equation}
 where $\mu_{\Sigma, |A-\lambda|}$ is the free convolution of $\widetilde{\mu}_\Sigma$ and $\widetilde{\mu}_{\vert A-\lambda\vert}$, consistent to the notation \eqref{defn:mu-sigma-xi}.

By using the Brown measure results for $T+a$ described in Section \ref{section:2.2.BrownMeasureLimit}, we can then extend the approach in \cite[Theorem 2.5]{BaoES2019singlering} (see also \cite{BenaychGeorges2017}) to the deformed model as follows. 
\begin{theorem} 
	\label{thm:HlambdaCauchy}
	Recall that $M$ is defined in~\eqref{eq:Mmeaning} and $D^{\varepsilon}(T,a)$ is defined in Definition~\ref{def:Depsilon}. For any $L_0>0$, there exists $N_0$ depending on $\varepsilon, T, a$ and $M$, the estimate 
\begin{equation}
  \label{eqn:estimate-main}
  \sup_{\lambda \in D^{(\varepsilon)}(T,a)} |G^\lambda (i\eta) -G_{\Sigma, |A-\lambda|}(i\eta)| \prec \frac{1}{N\eta},
\end{equation}
holds uniformly in $\eta>N^{-L_0}$ for all  $N\geq N_0$. 
\end{theorem}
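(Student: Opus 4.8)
The plan is to recognize the left-hand side of \eqref{eqn:estimate-main} as exactly the quantity controlled by the local single ring theorem of Bao--Erd\H{o}s--Schnelli (Theorem~\ref{thm:main-lemma-Bao-etal}), and then to upgrade that ``for fixed $\lambda$'' statement to one that is uniform over $\lambda\in D^{(\varepsilon)}(T,a)$.

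\textbf{Reduction to $G_H$.} Since $U,V$ are Haar, $Y-\lambda=U\Sigma V^*+(A-\lambda)$ and $U\Sigma V^*+|A-\lambda|$ have the same singular values, so the matrix $H^\lambda$ of \eqref{defn-H-omega} is unitarily equivalent to the matrix $H$ of \eqref{def:matrix-H} built from the model $X=U\Sigma V^*+\Xi$ of \eqref{eqn:defn-X-N} with $\Xi=|A-\lambda|$; hence $G^\lambda=G_H$. By the definition of $G_{\Sigma,|A-\lambda|}$ we have $G_{\Sigma,|A-\lambda|}=G_{\widetilde\mu_\Sigma\boxplus\widetilde\mu_\Xi}$, so \eqref{eqn:estimate-main} is precisely \eqref{eqn:difference-Cauchy-transform-fixed-z} evaluated at $z=i\eta$ with a fixed interval $I\ni 0$, \emph{provided} $0$ lies in the bulk $\mathcal{B}_{\widetilde\mu_\sigma\boxplus\widetilde\mu_{|a-\lambda|}}$ and the hypotheses of Theorem~\ref{thm:main-lemma-Bao-etal} hold uniformly in $\lambda$.

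\textbf{The point $0$ lies in the bulk, uniformly in $\lambda\in\overline{D^{(\varepsilon)}(T,a)}$.} Write $\mu_1=\widetilde\mu_{|a-\lambda|}$, $\mu_2=\widetilde\mu_\sigma$. For $\lambda\in D^{(\varepsilon)}(T,a)$ Theorem~\ref{thm:main-1-BZhong2022} gives $0<f_{\mu_1\boxplus\mu_2}(0)<\infty$, so $0\in\mathcal{B}_{\mu_1\boxplus\mu_2}$. I would show there are $\delta>0$ and $0<c_-\le c_+<\infty$, depending only on $\varepsilon,T,a,M$, with $I_\varepsilon:=[-\delta,\delta]\subset\mathcal{B}_{\widetilde\mu_{|a-\lambda|}\boxplus\widetilde\mu_\sigma}$ and $c_-\le f_{\widetilde\mu_{|a-\lambda|}\boxplus\widetilde\mu_\sigma}\le c_+$ on $I_\varepsilon$ for every $\lambda\in\overline{D^{(\varepsilon)}(T,a)}$. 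This uses: (i) $\overline{D^{(\varepsilon)}(T,a)}\subset\Omega(T,a)$ is compact (as $\Omega(T,a)$ is bounded), and by Proposition~\ref{prop:contomega} the continuity of $\lambda\mapsto\omega_j(\lambda,0)$ keeps $|\omega_1(\lambda,0)|,|\omega_2(\lambda,0)|$ in the compact interval $[\varepsilon,1/\varepsilon]\subset(0,\infty)$ on the closure, so $\overline{D^{(\varepsilon)}(T,a)}\subset D(T,a)$ stays away from $S(T,a)$ and from $\mathbb C\setminus\Omega(T,a)$; (ii) near $x=0$ the density equals $\tfrac1\pi\operatorname{Im}G_{\mu_1}(\omega_1(\lambda,x+i0))$, which is jointly continuous in $(\lambda,x)$ by Proposition~\ref{prop:contomega}, hence bounded above and below on $\overline{D^{(\varepsilon)}(T,a)}\times[-\delta,\delta]$ for $\delta$ small. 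The same continuity and compactness show that the family $\{\mu_{|a-\lambda|}:\lambda\in\overline{D^{(\varepsilon)}(T,a)}\}$ is weakly compact, has support uniformly bounded by $M$, and is uniformly bounded away from a single point mass, so that the constants $b_0,N_0$ in Theorem~\ref{thm:main-lemma-Bao-etal} can be chosen uniform over this family (and hence over $\lambda$).

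\textbf{Uniform L\'evy-distance input and the tail in $\eta$, then conclusion.} We are given $d_{\mathrm L}(\mu_\Sigma,\mu_\sigma)\to 0$. For $\Xi=|A-\lambda|$: all $\ast$-moments of $A_N$ converge, so each moment $\tr[((A_N-\lambda)^*(A_N-\lambda))^k]$ is a polynomial in $\lambda,\overline\lambda$ with converging coefficients, hence converges to $\tau[((a-\lambda)^*(a-\lambda))^k]$ \emph{uniformly} on compact $\lambda$-sets; with the uniform bound \eqref{C-constant-inequality} this yields $\sup_{\lambda\in\overline{D^{(\varepsilon)}(T,a)}}d_{\mathrm L}(\mu_{|A-\lambda|},\mu_{|a-\lambda|})\to 0$. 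Thus for all large $N$ the hypothesis $\sup_{N\ge N_0}(d_{\mathrm L}(\mu_\Sigma,\mu_\sigma)+d_{\mathrm L}(\mu_\Xi,\mu_\xi))\le 2b$ of Theorem~\ref{thm:main-lemma-Bao-etal} holds with $I=I_\varepsilon$ and $\xi=|a-\lambda|$, for every $\lambda\in\overline{D^{(\varepsilon)}(T,a)}$; applying it with $L:=\max(L_0,1)$ gives $|G_H(i\eta)-G_{\widetilde\mu_\Sigma\boxplus\widetilde\mu_\Xi}(i\eta)|\prec\frac{1}{N\eta(1+\eta)}\le\frac{1}{N\eta}$ uniformly for $N^{-L_0}<\eta\le N^{L}$ and uniformly in $\lambda\in D^{(\varepsilon)}(T,a)$. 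For $\eta>N^{L}\ge N$ nothing random is needed: both $\mu_H$ and $\widetilde\mu_\Sigma\boxplus\widetilde\mu_\Xi$ are symmetric with support in $[-2C_K,2C_K]$, so each Cauchy transform differs from $1/(i\eta)$ by at most $2C_K/\eta^2$, whence the difference is $\le 4C_K/\eta^2\le 4C_K/(N\eta)\prec 1/(N\eta)$. Combining the two ranges of $\eta$ gives \eqref{eqn:estimate-main}.

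\textbf{Expected main obstacle.} The delicate part is the uniformity in $\lambda$: making the choice of $I_\varepsilon$, the density bounds $c_\pm$, and the constants $b_0,N_0$ all independent of $\lambda\in\overline{D^{(\varepsilon)}(T,a)}$; this is exactly what Propositions~\ref{prop:contomega} and~\ref{prop:suborduniformconv} together with the compactness of $\overline{D^{(\varepsilon)}(T,a)}$ are for. One further configuration has to be treated separately: if \emph{both} $\widetilde\mu_\sigma$ and $\widetilde\mu_{|a-\lambda|}$ are supported at only two points, Theorem~\ref{thm:main-lemma-Bao-etal} is not applicable; but this forces $\mu_\sigma$ to be a single point mass and $a$ to be a scalar, so that $D^{(\varepsilon)}(T,a)$ degenerates to (part of) a circle and one argues directly on the resulting perturbed Haar-unitary model.
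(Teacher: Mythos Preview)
Your reduction to $G_H$, the observation that $0\in\mathcal{B}_{\mu_1\boxplus\mu_2}$ for $\lambda\in D^{(\varepsilon)}(T,a)$, and the treatment of the range $\eta>N^L$ via the moment expansion are all correct and match the paper. The substantive difference is in how you pass from ``the estimate holds for each fixed $\lambda$'' to ``the estimate holds uniformly in $\lambda\in D^{(\varepsilon)}(T,a)$''.

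Your proposed route is to argue that the constants $b_0,N_0$ in Theorem~\ref{thm:main-lemma-Bao-etal} can themselves be chosen uniform over the weakly compact family $\{\mu_{|a-\lambda|}:\lambda\in\overline{D^{(\varepsilon)}(T,a)}\}$. This is the step that is not justified as written: Theorem~\ref{thm:main-lemma-Bao-etal} is stated for a \emph{fixed} pair $(\mu_\sigma,\mu_\xi)$, and the dependence of $N_0$ on $\mu_\xi$ is not asserted to be continuous (or upper semicontinuous) in any topology. Compactness of the family alone does not give you a uniform $N_0$ without that continuity. To make your argument rigorous you would have to reopen the proof in \cite{BaoES2019singlering} and track every constant; that may well be possible, but it is not a black-box consequence of the cited theorem, and you have not sketched how it would go.

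The paper avoids this entirely with a standard lattice-plus-Lipschitz argument: apply Theorem~\ref{thm:main-lemma-Bao-etal} for each fixed $\lambda$; take a union bound over the $O(N^{2L_1})$ lattice points $\lambda\in D^{(\varepsilon)}(T,a)\cap N^{-L_1}(\mathbb{Z}+i\mathbb{Z})$ (stochastic domination absorbs the polynomial factor); and then show the deterministic Lipschitz estimates
\[
|G^{\lambda_1}(i\eta)-G^{\lambda_2}(i\eta)|\le\frac{|\lambda_1-\lambda_2|}{\eta^2},\qquad
|G_{\Sigma,|A-\lambda_1|}(i\eta)-G_{\Sigma,|A-\lambda_2|}(i\eta)|\le\frac{C}{\eta}\Big(1+\frac{1}{\eta}\Big)|\lambda_1-\lambda_2|,
\]
the first from the resolvent identity, the second from \cite[(2.20)]{BaoES2016-jfa}. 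Choosing $L_1$ large relative to $L_0$ then extends the bound from the lattice to all of $D^{(\varepsilon)}(T,a)$. This uses Theorem~\ref{thm:main-lemma-Bao-etal} purely as a black box at individual points, which is what makes it robust; your approach, if it can be made to work, would require the internals of that theorem.
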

\begin{proof}
For any $\lambda\in\mathbb{C}$, let 
\[
    A-\lambda=U_1^* \Xi^\lambda V_1
\]
be the singular value decomposition of $A-\lambda$, where $U_1, V_1$ are unitary matrices and $\Xi^\lambda=\text{diag}(\xi_1^\lambda, \cdots, \xi_N^\lambda)$ is a diagonal matrix consisting of the singular values $\xi_j^\lambda$ of $A-\lambda$ on the diagonal. Then, the matrix $U\Sigma V^*+A-\lambda$ has the same singular values as $U_1 U \Sigma V^*V_1 +\Xi^\lambda$. Then $U_1U$ and $V_1V$ are again two independent Haar random unitary matrices. Hence, $U\Sigma V^*+A-\lambda$ has the same singular values as $U\Sigma V^*+\Xi^\lambda$.
Moreover, since the matrices $A-\lambda$ converge in $*$-moments to $a-\lambda$, it follows that 
\[
   \widetilde{\mu}_{\vert A-\lambda\vert}\longrightarrow \widetilde{\mu}_{\vert a-\lambda\vert}
\]
weakly. Recall that our random matrix model assumes $\mu_\Sigma\to\mu_\sigma$ weakly.

If $|z|\gg 1$, we note that for a symmetric probability measure $\mu$ on $\mathbb{R}$, we can expand
\[
   G_\mu(z)=\int_{\mathbb{R}}\frac{1}{z-u}d\mu(u)=\sum_{k=0}^\infty \frac{m_{2k}(\mu)}{z^{2k+1}},
\]
where $m_n(\mu)=\int_\mathbb{R} u^n d\mu(u)$ is the $n$-th moment of $\mu$. Consequently, for any $\lambda \in D^{(\varepsilon)}(T, a)$, we have 
\begin{equation}
  \label{eqn:4.4-in-proof}
    G^\lambda(i\eta)= \frac{1}{i\eta}+O\left( \frac{1}{\eta^3} \right), \qquad G_{\Sigma, |A-\lambda|}(i\eta)= \frac{1}{i\eta}+O\left( \frac{1}{\eta^3} \right)
\end{equation}
provided that $\eta>N^L$ and $L\geq L_0$ is sufficiently large. The above approximation is uniform for all $\lambda \in D^{(\varepsilon)}(T, a)$. Hence, \eqref{eqn:estimate-main} holds uniformly in $\lambda$ for $\eta>N^L$. In the rest of the proof, we show that \eqref{eqn:estimate-main} also holds uniformly in $\lambda$ for $N^{-L_0}\leq\eta\leq N^L$.

Recall that we write $\mu_1=\widetilde{\mu}_{|a-\lambda|}$ and $\mu_2=\widetilde{\mu}_{|T|} = \widetilde{\mu}_\sigma$. There are subordination functions in the sense of \eqref{eqn:subordination-scalar} such that $\omega_1(\lambda,z)+\omega_2(\lambda,z)=F_{\mu_1\boxplus\mu_2}(z)+z$. Hence,
for $\lambda\in  D^{(\varepsilon)}(T,a)$, 
\[
  f_{\mu_1\boxplus\mu_2}(0)=-\frac{1}{\pi}\lim_{\eta\rightarrow 0}\operatorname{Im} G_{\mu_1\boxplus\mu_2}(i\eta)
   \in \left( \frac{\varepsilon}{2\pi}, \frac{1}{2\pi\varepsilon} \right).
\]
Hence by Definition~\ref{def:bulk}, $0 \in \mathcal{B}_{\mu_1\boxplus\mu_2}$. By applying Theorem \ref{thm:main-lemma-Bao-etal} for the choice $\mathcal{I}=\{0\}$, \eqref{eqn:estimate-main} holds uniformly for $0\leq \eta\leq N^L$ for $\lambda
\in D^{(\varepsilon)}(T, a)$ fixed. 

Fix some large $L$ so that \eqref{eqn:4.4-in-proof} holds for any $\lambda \in D^{(\varepsilon)}(T, a)$.  We next adapt the approach in the proof of \cite[Theorem 2.5]{BaoES2019singlering} for $N^{-L_0}\leq \eta\leq N^L$. 
By the definition of stochastic domination and \eqref{eqn:estimate-main} for any $\lambda \in D^{(\varepsilon)}(T,a)$ fixed, the number of elements in the lattice $D^{(\varepsilon)}(T,a)\cap \{N^{-L_1}+i N^{-L_1}\}$ is of order $N^{2L_1}$, hence we have 
\[
  \max_{\lambda \in D^{(\varepsilon)}(T,a)\cap N^{-L_1}\{\mathbb{Z}\times i\mathbb{Z} \}}
     |G^\lambda (i\eta) -G_{\Sigma, |A-\lambda|}(i\eta)| \prec \frac{1}{N\eta}.
\]
That is, the inequality \eqref{eqn:estimate-main} holds for $\lambda$ at the lattice points in $D^{(\varepsilon)}(T,a)$, where the coordinates of these points are multiple of $N^{-L_1}$.

It remains to prove some Lipschitz type inequality with respect to the parameter $\lambda$. Let $L_1$ be a large positive number such that $L_1\geq 2L$. We claim that, for $\lambda_1
, \lambda_2\in D^{(\varepsilon)}(T,a)$ with $|\lambda_1-\lambda_2|\leq N^{-L_1}$, we have 
\begin{equation}
  \label{ineq-1-in-proof}
    |G^{\lambda_1} (i\eta)-G^{\lambda_2} (i\eta)|\prec \frac{1}{N\eta}
\end{equation}
and 
\begin{equation}
  \label{ineqn-2-in-proof}
  |G_{\Sigma, |A-\lambda_1|}(i\eta)-G_{\Sigma, |A-\lambda_2|}(i\eta)|\prec \frac{1}{N\eta}
\end{equation}
uniformly in  $N^{-L}\leq N^{-L_0}\leq \eta \leq N^L$. Recall the definition of the random matrix $H^\lambda$ \eqref{defn-H-omega}. By the resolvent identity, if $\eta \geq N^{-L_0}\geq N^{-L}$, we have 
\begin{align*}
    |G^{\lambda_1} (i\eta)-G^{\lambda_2} (i\eta)|
      &=\left|\frac{1}{2N}\operatorname{Tr}(i\eta - H^{\lambda_1})^{-1} - \frac{1}{2N}\operatorname{Tr}(i\eta - H^{\lambda_2})^{-1}\right|\\
       &\leq \frac{|\lambda_1-\lambda_2|}{2N} |\operatorname{Tr} \left( (i\eta - H^{\lambda_1})^{-1}  (i\eta - H^{\lambda_2})^{-1} \right)|\\
        &\leq |\lambda_1-\lambda_2|\cdot \Vert ( H^{\lambda_1}-i\eta)^{-1}\Vert\cdot \Vert ( H^{\lambda_2}-i\eta)^{-1}\Vert\\
       &\leq \frac{|\lambda_1-\lambda_2|}{\eta^2} \leq \frac{1}{\eta} N^{-L_1+L}\leq \frac{1}{N\eta}.
\end{align*}
This establishes \eqref{ineq-1-in-proof}.

We now apply \cite[Equation (2.20)]{BaoES2016-jfa} to get 
\[
  |G_{\Sigma, |A-\lambda_1|}(i\eta)-G_{\Sigma, |A-\lambda_2|}(i\eta)|
   \leq \frac{C}{\eta}\left( 1+\frac{1}{\eta} \right) d_{\mathrm{L}}( \widetilde{\mu}_{|A-\lambda_1|}, \widetilde{\mu}_{|A-\lambda_2|}),
\]
for all $\eta>0$ and some constant $C$ independent from $\eta$. Note that $\widetilde{\mu}_{|A-\lambda|}$ has the same distribution as the eigenvalue distribution of the matrix $\begin{bmatrix}
  0 & A-\lambda\\
  A^*-\overline{\lambda} &0
\end{bmatrix}$.
By some standard inequality for spectral measure as in \cite[Proposition 1.6 (iii)]{Fack1982}, we have 
\begin{align*}
  d_{\mathrm{L}}( \widetilde{\mu}_{|A-\lambda_1|}, \widetilde{\mu}_{|A-\lambda_2|})
   &\leq \left \lVert\begin{bmatrix}
  0 & A-\lambda_1\\
  A^*-\overline{\lambda_1} &0
\end{bmatrix}-\begin{bmatrix}
  0 & A-\lambda_2\\
  A^*-\overline{\lambda_2} &0
\end{bmatrix} \right \rVert\\
     &\leq |\lambda_1-\lambda_2|\leq \frac{1}{N^{L_1}}\leq \frac{1}{N}.
\end{align*}
This proves \eqref{ineqn-2-in-proof}. We conclude that \eqref{eqn:estimate-main} holds uniformly in $\lambda$ for $N^{-L_0}\leq \eta\leq N^L$. Since we already prove that \eqref{eqn:estimate-main} holds uniformly in $\lambda$ for $\eta> N^L$ in the first two paragraph of the proof, the theorem is established.
\end{proof}

\subsection{Proof of the local convergence}
In this section, we prove a deformed local single ring theorem for the random matrix model $Y$ in \eqref{eq:modelY} following the approaches in \cite{BenaychGeorges2017} and \cite{BaoES2019singlering}. The random matrix $Y$ converges in $\ast$-distribution to $y$ in \eqref{eq:limity}. Recall that we assume $\|A\|, \|\Sigma\|\leq M$ for all $N$. We denote
\[H^\lambda = \begin{pmatrix}
	0 & Y-\lambda\\
	(Y-\lambda)^* & 0
\end{pmatrix}.\]

\begin{theorem}
	\label{thm:localconv}
	Consider the random matrix model $Y$ and the operator $y\in\mathcal{A}$ as in \eqref{eq:modelY} and \eqref{eq:limity}. Suppose that $\Sigma$ is invertible and $\|\Sigma^{-1}\|\leq N^{\alpha}$ for some $\alpha>0$ independent of $N$. Write $\lambda_1,\ldots,\lambda_N$ be the eigenvalues of the random matrix $Y$. 
	
	Let $R>0$. For any compact set $K\subset D(T,a)$, $\beta\in (0,1/2)$, $w_0\in K$ and any $f:\C\to\R$ be a smooth function supported in the disk centered at $0$ of radius $R$, define the function by
	\[f_{w_0}(w) = N^{2\beta} f(N^\beta(w-w_0)).\]
	Let $a_N, \sigma_N\in \mathcal{A}$ be such that $a_N$ and $\sigma_N$ have the \emph{same} $\ast$-distribution as $A_N$ and $\Sigma_N$. Also let $T_N\in \mathcal{A}$ be an $R$-diagonal operator such that $\mu_{\vert T_N\vert} = \mu_{\Sigma_N}$. Write
	\[y_N = a_N+T_N.\]
	Then
	\begin{equation}
		\label{eq:localsinglering}
		\frac{1}{N}\sum_{k=1}^N f_{w_0}(\lambda_k) - \int_\C f_{w_0}(w)\,d\mu_{y_N}(w)\prec N^{-1+2\beta}\|\Delta f\|_{L^1(\C)},
	\end{equation}
	 where $\mu_{y_N}$ is the Brown measure of $y_N$. This convergence is uniform in $f$ and in $w_0\in K$, for all large enough $N$ depending on $K$, $R$, $M$, $a$ and $T$. (The constant $M$ is defined in \eqref{eq:Mmeaning}.)
\end{theorem}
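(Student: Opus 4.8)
The plan is to run Girko's Hermitization together with the quantitative Cauchy transform estimates above, splitting the logarithmic potential at a small scale $\eta_\ast=N^{-L_0}$, where $L_0$ is a large constant fixed at the end in terms of the exponent demanded in the stochastic domination. First I would write, using the standard expression for the empirical spectral measure (Section~11.2 of \cite{MingoSpeicherBook}) and Definition~\ref{def:BrownDef},
\[
	\frac1N\sum_{k=1}^N f_{w_0}(\lambda_k)-\int_\C f_{w_0}\,d\mu_{y_N}
	=\frac1{2\pi}\int_\C\Delta f_{w_0}(\lambda)\Bigl[\tr\log\lvert Y-\lambda\rvert-\tau\log\lvert y_N-\lambda\rvert\Bigr]\,d^2\lambda .
\]
By the same $R$-diagonal Hermitization identity used in Section~2.3 to identify the limit of $\widetilde\mu_{\lvert U\Sigma V^*+\Xi\rvert}$ (here $T_N$ is $R$-diagonal and freely independent from $a_N$, with $a_N$ and $A_N$ having the same $\ast$-distribution and $\lvert T_N\rvert$ and $\Sigma_N$ the same distribution), one has $\widetilde\mu_{\lvert y_N-\lambda\rvert}=\widetilde\mu_{\lvert A-\lambda\rvert}\boxplus\widetilde\mu_\Sigma=\mu_{\Sigma,\lvert A-\lambda\rvert}$, so $\tau\log\lvert y_N-\lambda\rvert=\int\log\lvert s\rvert\,d\mu_{\Sigma,\lvert A-\lambda\rvert}(s)$ and its Cauchy transform on $i(0,\infty)$ is $G_{\Sigma,\lvert A-\lambda\rvert}$. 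Since $K\subset D(T,a)=\bigcup_n D^{(1/n)}(T,a)$ and each $D^{(\varepsilon)}(T,a)$ is open by Proposition~\ref{prop:contomega}, there are $\varepsilon>0$ and $N_0$, depending only on $K$ and $R$, such that $\supp\Delta f_{w_0}\subset D^{(\varepsilon)}(T,a)$ for all $N\geq N_0$ and all $w_0\in K$.

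Next, with $\ell_N(\lambda,\eta)=\frac12\int\log(x^2+\eta^2)\,d\mu_{H^\lambda}(x)$ and the deterministic analogue $\ell_\infty(\lambda,\eta)$ obtained by replacing $\mu_{H^\lambda}$ with $\widetilde\mu_{\lvert y_N-\lambda\rvert}$, I would use $\partial_\eta\ell_N(\lambda,\eta)=iG^\lambda(i\eta)$, $\partial_\eta\ell_\infty(\lambda,\eta)=iG_{\Sigma,\lvert A-\lambda\rvert}(i\eta)$ and $\ell(\lambda,\eta)-\log\eta\to0$ as $\eta\to\infty$ to split the integrand into $(A)+(B)+(C)$, where
\[
	(A)=-\int_0^{\eta_\ast}\!iG^\lambda(i\eta)\,d\eta,\qquad
	(B)=-\int_{\eta_\ast}^\infty\! i\bigl[G^\lambda(i\eta)-G_{\Sigma,\lvert A-\lambda\rvert}(i\eta)\bigr]d\eta,\qquad
	(C)=\int_0^{\eta_\ast}\!iG_{\Sigma,\lvert A-\lambda\rvert}(i\eta)\,d\eta .
\]
For $(C)$: by Propositions~\ref{prop:contomega} and~\ref{prop:suborduniformconv} and Theorem~\ref{thm:main-1-BZhong2022}, $\operatorname{Im}\omega_2^{(N)}(\lambda,i\eta)$ stays bounded below by a positive constant, uniformly for $\lambda$ in a compact neighbourhood of $K$ inside $D^{(\varepsilon)}(T,a)$ and $\eta\in[0,1]$ once $N$ is large, so $\lvert G_{\Sigma,\lvert A-\lambda\rvert}(i\eta)\rvert=\lvert G_{\widetilde\mu_\Sigma}(\omega_2^{(N)}(\lambda,i\eta))\rvert\leq 1/\operatorname{Im}\omega_2^{(N)}(\lambda,i\eta)$ is bounded, whence $\lvert(C)(\lambda)\rvert\leq C\eta_\ast\leq N^{-1}$ uniformly. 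For $(B)$: splitting at $\Lambda=N^{1/4}$, Theorem~\ref{thm:HlambdaCauchy} gives $\lvert G^\lambda(i\eta)-G_{\Sigma,\lvert A-\lambda\rvert}(i\eta)\rvert\prec(N\eta)^{-1}$ on $[\eta_\ast,\Lambda]$ (a net-plus-Lipschitz argument in $\eta$ and $\lambda$, exactly as in the proof of that theorem, upgrades this to a genuine supremum over $\lambda\in\supp\Delta f_{w_0}$), so $\int_{\eta_\ast}^\Lambda\prec N^{-1}\log(\Lambda/\eta_\ast)\prec N^{-1}$; on $[\Lambda,\infty)$ an elementary moment expansion, together with the direct computation that the second moments of $\widetilde\mu_{\lvert Y-\lambda\rvert}$ and $\widetilde\mu_{\lvert y_N-\lambda\rvert}$ differ exactly by the mean-zero quantity $2\operatorname{Re}\tr[V\Sigma U^*(A-\lambda)]$ — which is affine in $\lambda$ and $\prec N^{-1/2}$ — yields $\lvert G^\lambda(i\eta)-G_{\Sigma,\lvert A-\lambda\rvert}(i\eta)\rvert\prec N^{-1/2}\eta^{-3}+\eta^{-5}$, hence $\int_\Lambda^\infty\prec N^{-1}$. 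Thus $\sup_{\lambda\in\supp\Delta f_{w_0}}(\lvert(B)(\lambda)\rvert+\lvert(C)(\lambda)\rvert)\prec N^{-1}$, and consequently $\int\lvert\Delta f_{w_0}\rvert(\lvert(B)\rvert+\lvert(C)\rvert)\,d^2\lambda\prec N^{-1}\lVert\Delta f_{w_0}\rVert_{L^1}=N^{-1+2\beta}\lVert\Delta f\rVert_{L^1}$.

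The hard part will be the term $(A)=\tr\log\lvert Y-\lambda\rvert-\ell_N(\lambda,\eta_\ast)=-\frac1{2N}\sum_i\log\bigl(1+\eta_\ast^2/s_{i}(Y-\lambda)^2\bigr)$: because $\tr\log\lvert Y-\lambda\rvert$ has (integrable) logarithmic singularities in $\lambda$, $(A)$ cannot be controlled in $L^\infty$, and one must keep the $L^1$ norm of $\Delta f_{w_0}$ while nevertheless controlling the small singular values of $Y-\lambda$ uniformly over the shrinking disk $\supp\Delta f_{w_0}$. I would handle this by partitioning $\supp\Delta f_{w_0}=\mathcal G\sqcup\mathcal B$ with $\mathcal G=\{\lambda:s_{\mathrm{min}}(Y-\lambda)\geq N^{-K_1}\}$, where $K_1$ is a large constant (depending on the target stochastic-domination exponent via the constants $c,c'$ of Theorem~\ref{thm:lsv}) and $L_0=K_1+1$. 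On $\mathcal G$ every singular value of $Y-\lambda$ exceeds $N^{-K_1}\gg\eta_\ast$, so $\lvert(A)(\lambda)\rvert\leq\frac12 N^{2K_1-2L_0}=\frac12N^{-2}$ deterministically, giving $\int_{\mathcal G}\lvert\Delta f_{w_0}\rvert\lvert(A)\rvert\leq N^{-1}\lVert\Delta f_{w_0}\rVert_{L^1}$. On $\mathcal B$ the crude bound $\lvert(A)(\lambda)\rvert\leq\lvert\log s_{\mathrm{min}}(Y-\lambda)\rvert+C(L_0\log N+1)$ (with $C=C(M)$), combined with Theorem~\ref{thm:lsv} — which gives $\mathbb P(s_{\mathrm{min}}(Y-\lambda)<t)\leq t^{c}N^{c'}$ and hence, for $K_1$ large, $\mathbb E\bigl[\lvert\log s_{\mathrm{min}}(Y-\lambda)\rvert\,\mathbf 1_{\{s_{\mathrm{min}}(Y-\lambda)<N^{-K_1}\}}\bigr]\leq N^{-D}$ — yields $\mathbb E\int_{\mathcal B}\lvert\Delta f_{w_0}\rvert\lvert(A)\rvert\leq C' N^{-D}\lVert\Delta f_{w_0}\rVert_{L^1}$, so by Markov's inequality $\int_{\mathcal B}\lvert\Delta f_{w_0}\rvert\lvert(A)\rvert\prec N^{-1}\lVert\Delta f_{w_0}\rVert_{L^1}$. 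Adding the three contributions proves \eqref{eq:localsinglering}, with all estimates uniform in $f$ supported in $D(0,R)$ and in $w_0\in K$ for $N$ large depending only on $K,R,M,a,T$ (and on the target exponent). The single most delicate point is precisely this uniform least-singular-value control on the $N^{-\beta}$-scale support while retaining the sharp $\lVert\Delta f\rVert_{L^1}$ dependence; everything else is a routine, if lengthy, combination of the subordination-continuity results and Theorems~\ref{thm:HlambdaCauchy} and~\ref{thm:lsv}.
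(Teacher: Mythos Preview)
Your proposal is correct and follows essentially the same strategy as the paper: Girko's Hermitization, the identification $\widetilde\mu_{|y_N-\lambda|}=\mu_{\Sigma,|A-\lambda|}$, a split of the $\eta$-integral at a small scale $N^{-L_0}$, Theorem~\ref{thm:HlambdaCauchy} in the bulk range, the subordination bound (your $(C)$, the paper's Lemma~\ref{lem:boundedintG}) for the deterministic small-$\eta$ piece, and Theorem~\ref{thm:lsv} plus Markov for the random small-$\eta$ piece. The only cosmetic differences are that the paper truncates the $\eta$-integral at $N^L$ and bounds the residual $\log|u-iN^L|$ term deterministically (your moment-expansion tail on $[\Lambda,\infty)$ works too but is slightly more elaborate), and that the paper packages your good/bad-set argument for $(A)$ as a single pointwise expectation bound in Lemma~\ref{lem:singularvalueG}.
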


We need several lemmas before we can prove Theorem~\ref{thm:localconv}.

\begin{lemma}
	\label{lem:boundedintG}
	For any $\delta>0$, there exists $L_1>1$ such that for any $\lambda\in D^{(\delta)}(T,a)$,
	\begin{equation}
		\label{eq:boundedintG}
		\left\vert\int_0^{N^{-L_1}}G_{\Sigma,\vert A-\lambda\vert}(i\eta)\,d\eta\right\vert\leq \frac{1}{N}
	\end{equation}
	for all large enough $N$.
\end{lemma}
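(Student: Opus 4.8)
The plan is to derive the estimate from a uniform bound on the Cauchy transform near the spectral edge. Concretely, I will prove that there are a constant $C_\delta>0$ and an integer $N_0$, both depending only on $\delta$, $T$, $a$ and $M$, such that
\[
  \sup_{\lambda\in D^{(\delta)}(T,a)}\ \sup_{0<\eta\le 1}\ |G_{\Sigma,|A-\lambda|}(i\eta)|\ \le\ C_\delta \qquad\text{for all } N\ge N_0 .
\]
Granting this, the lemma is immediate: for any $L_1>1$ and all $N$ large enough that $N^{-L_1}\le 1$ and $C_\delta\le N^{L_1-1}$, one has for every $\lambda\in D^{(\delta)}(T,a)$
\[
  \left|\int_0^{N^{-L_1}} G_{\Sigma,|A-\lambda|}(i\eta)\,d\eta\right|\ \le\ \int_0^{N^{-L_1}}|G_{\Sigma,|A-\lambda|}(i\eta)|\,d\eta\ \le\ C_\delta N^{-L_1}\ \le\ \frac{1}{N}.
\]

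To get the uniform bound, set $K=\overline{D^{(\delta)}(T,a)}$. Since $D^{(\delta)}(T,a)\subset\Omega(T,a)$ and $\Omega(T,a)$ is bounded, $K$ is compact; and since $\omega_1(\cdot,0),\omega_2(\cdot,0)$ are continuous on $\C$ (Proposition~\ref{prop:contomega}), on $K$ we have $|\omega_1(\cdot,0)|,|\omega_2(\cdot,0)|\in[\delta,1/\delta]$, so by the characterization of $D(T,a)$ in Theorem~\ref{thm:main-1-BZhong2022}, $K$ is a compact subset of $D(T,a)$. Introduce operators $a_N,\sigma_N\in\mathcal{A}$ with the same $\ast$-distributions as $A_N,\Sigma_N$, as in Theorem~\ref{thm:localconv}; this changes none of the measures $\widetilde\mu_{|A-\lambda|}$, $\widetilde\mu_{\Sigma}$ nor the transform $G_{\Sigma,|A-\lambda|}$. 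These measures are symmetric and, for $N$ large and $\lambda\in K$, neither is a point mass (their variances stay bounded away from $0$, since $\mu_\Sigma\to\mu_\sigma\ne\delta_0$ and $\widetilde\mu_{|a-\lambda|}$ is not a point mass for $\lambda\in D(T,a)$), so the subordination functions $\omega_1^{(N)}(\lambda,\cdot),\omega_2^{(N)}(\lambda,\cdot)$ of $\widetilde\mu_{\Sigma}\boxplus\widetilde\mu_{|A-\lambda|}$ are purely imaginary on the positive imaginary axis and
\[
  G_{\Sigma,|A-\lambda|}(i\eta)=\frac{1}{\,\omega_1^{(N)}(\lambda,i\eta)+\omega_2^{(N)}(\lambda,i\eta)-i\eta\,},\qquad \eta>0.
\]
Applying Proposition~\ref{prop:suborduniformconv} with $K$ and $K'=\{i\eta:0\le\eta\le 1\}$ --- whose hypothesis that $\omega_1,\omega_2$ be bounded on $K\times K'$ holds by their continuity on $\C^+\cup\R$ in the second variable (Proposition~\ref{prop:contomega}) and compactness of $K\times K'$ --- we obtain that $\omega_1^{(N)}(\lambda,i\eta)+\omega_2^{(N)}(\lambda,i\eta)-i\eta$ converges to $F_{\mu_1\boxplus\mu_2}(i\eta)=\omega_1(\lambda,i\eta)+\omega_2(\lambda,i\eta)-i\eta$ (using \eqref{eqn:subordination-scalar}) uniformly over $\lambda\in K$ and $0\le\eta\le 1$.

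It remains to bound $|F_{\mu_1\boxplus\mu_2}(i\eta)|$ below on $K\times[0,1]$. For $0<\eta\le 1$ we have $F_{\mu_1\boxplus\mu_2}(i\eta)\in i(0,\infty)$ (as $\mu_1\boxplus\mu_2$ is symmetric), hence $\ne 0$; at $\eta=0$, Proposition~\ref{prop:contomega} and \eqref{eqn:subordination-scalar} give $F_{\mu_1\boxplus\mu_2}(i0)=\omega_1(\lambda,0)+\omega_2(\lambda,0)=i\,(|\omega_1(\lambda,0)|+|\omega_2(\lambda,0)|)$, of modulus $\ge 2\delta$ for $\lambda\in K$. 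Thus $(\lambda,\eta)\mapsto|F_{\mu_1\boxplus\mu_2}(i\eta)|$ is continuous and strictly positive on the compact set $K\times[0,1]$, so $c_0\deq\inf_{K\times[0,1]}|F_{\mu_1\boxplus\mu_2}(i\eta)|>0$. Choosing $N_0$ so large that the uniform convergence above forces $|\omega_1^{(N)}(\lambda,i\eta)+\omega_2^{(N)}(\lambda,i\eta)-i\eta-F_{\mu_1\boxplus\mu_2}(i\eta)|<c_0/2$ for all $\lambda\in K$, $0\le\eta\le 1$ and $N\ge N_0$, we get $|G_{\Sigma,|A-\lambda|}(i\eta)|\le 2/c_0$ there; the uniform bound holds with $C_\delta=2/c_0$. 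The only real difficulty is precisely this uniformity in $\lambda\in D^{(\delta)}(T,a)$ and in large $N$: for a single fixed $\lambda\in D(T,a)$ the edge bound is essentially automatic because $\mu_1\boxplus\mu_2$ has positive finite density at $0$, but the uniform version needs the joint continuity of the limiting subordination functions down to the real axis (Proposition~\ref{prop:contomega}) together with their uniform approximation by the matrix subordination functions (Proposition~\ref{prop:suborduniformconv}); the rest is routine.
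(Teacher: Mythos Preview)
Your proof is correct and follows essentially the same approach as the paper. Both arguments write $G_{\Sigma,|A-\lambda|}(i\eta)$ via the subordination relation, invoke Proposition~\ref{prop:suborduniformconv} on the compact set $\overline{D^{(\delta)}(T,a)}\times\{i\eta:0\le\eta\le1\}$ to obtain a uniform bound on the Cauchy transform, and then integrate; your version simply spells out the lower bound on $|F_{\mu_1\boxplus\mu_2}(i\eta)|$ and the passage from uniform convergence to a uniform bound more explicitly than the paper, which compresses this into the single sentence that $G_{\Sigma,|A-\lambda|}(i\eta)$ is uniformly bounded on $\overline{D^{(\delta)}(T,a)}\subset D^{(\delta/2)}(T,a)$.
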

\begin{proof}
	Our strategy is to apply Proposition~\ref{prop:suborduniformconv} with that $a_N$ has the same distribution as $A=A_N$ and $\sigma_N$ has the same distribution as $\Sigma=\Sigma_N$. We use the notation $\omega_1^{(N)}(\lambda,\cdot)$ and $\omega_2^{(N)}(\lambda,\cdot)$ in Proposition~\ref{prop:suborduniformconv} to denote the subordination functions corresponding to the free convolution of $\widetilde{\mu}_{\vert A-\lambda\vert}$ and $\widetilde{\mu}_{\vert \Sigma\vert}$.
	
	By the subordination relation~\eqref{eqn:subordination-scalar},
	\[G_{\Sigma,\vert A-\lambda\vert}(i\eta) = \frac{1}{\omega_1^{(N)}(\lambda,i\eta)+\omega_1^{(N)}(\lambda,i\eta)-i\eta}.\]
	By Definition~\ref{def:Depsilon}, the subordination functions $\omega_1(\lambda,\cdot)$ and $\omega_2(\lambda,\cdot)$ corresponding to the free convolution of $\widetilde{\mu}_{\vert a-\lambda\vert}$ and $\widetilde{\mu}_{\sigma}$ satisfy $\omega_1(\lambda,0),\omega_2(\lambda,0)\in(\delta/2,2/\delta)$ for all $\lambda\in D^{(\delta/2)}(T,a)$. We then apply Proposition~\ref{prop:suborduniformconv} to see that $G_{\Sigma,\vert A-\lambda\vert}(i\eta)$ is uniformly bounded for $\lambda\in \overline{D^{(\delta)}(T,a)}\subset D^{(\delta/2)}(T,a)$ and $\eta\in[0,1]$. Thus, we can choose $L_1$ large enough such that \eqref{eq:boundedintG} holds.
\end{proof}

\begin{lemma}
	\label{lem:singularvalueG}
	For any $\alpha>0$, there exist positive constants $c$ and $\widetilde{c}$ such that for any $L_1>0$, 
	\[\E\left\vert\int_0^{N^{-L_1}}G^\lambda(i\eta)\,d\eta\right\vert\leq N^{-cL_1/2+\widetilde{c}}\]
	provided that $\Vert (A-\lambda)^{-1}\Vert \leq N^{\alpha}$ or $\Vert \Sigma^{-1}\Vert \leq N^{\alpha}$. We have used the convention that if a matrix $\Gamma$ is not invertible, set $\Vert \Gamma^{-1}\Vert = \infty$.
\end{lemma}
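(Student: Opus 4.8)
The plan is to bound $\E\left\vert\int_0^{N^{-L_1}}G^\lambda(i\eta)\,d\eta\right\vert$ by splitting the region of integration according to whether the least singular value $s_{\mathrm{min}}(Y-\lambda)$ is large or small, using Theorem~\ref{thm:lsv} to control the small-singular-value event. First I would record the pointwise bound coming from the spectral representation of $G^\lambda$: writing $z_1^\lambda,\dots,z_N^\lambda$ for the singular values of $Y-\lambda$, one has
\[
	\vert G^\lambda(i\eta)\vert = \left\vert\frac{1}{2N}\sum_{k=1}^N\left(\frac{1}{i\eta-z_k^\lambda}+\frac{1}{i\eta+z_k^\lambda}\right)\right\vert = \frac{\eta}{N}\sum_{k=1}^N\frac{1}{\eta^2+(z_k^\lambda)^2}\leq \frac{1}{\eta},
\]
and also, more crudely, $\vert G^\lambda(i\eta)\vert\leq \frac{1}{s_{\mathrm{min}}(Y-\lambda)}$ whenever $\eta\leq s_{\mathrm{min}}(Y-\lambda)$ — actually the cleaner bound $\vert G^\lambda(i\eta)\vert\leq \eta^{-1}\wedge (2\,s_{\mathrm{min}}(Y-\lambda))^{-1}$ suffices. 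Thus $\left\vert\int_0^{N^{-L_1}}G^\lambda(i\eta)\,d\eta\right\vert\leq N^{-L_1}/s_{\mathrm{min}}(Y-\lambda)$ always, and trivially $\leq \int_0^{N^{-L_1}}\eta^{-1}\,d\eta$ — which diverges, so the bound via $s_{\mathrm{min}}$ is the one to use together with a truncation.

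Next I would introduce a threshold $t_0 = N^{-L_1/2}$ (or a similar power) and split
\[
	\E\left\vert\int_0^{N^{-L_1}}G^\lambda(i\eta)\,d\eta\right\vert \leq \E\left[\mathbbm{1}_{\{s_{\mathrm{min}}(Y-\lambda)\geq t_0\}}\frac{N^{-L_1}}{s_{\mathrm{min}}(Y-\lambda)}\right] + \E\left[\mathbbm{1}_{\{s_{\mathrm{min}}(Y-\lambda)< t_0\}}\int_0^{N^{-L_1}}\vert G^\lambda(i\eta)\vert\,d\eta\right].
\]
On the first event the integrand is at most $N^{-L_1}/t_0 = N^{-L_1/2}$. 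For the second term, I would use the a priori bound $\vert G^\lambda(i\eta)\vert\leq 1/\eta$ only down to scale $N^{-L_1}$ — but that still gives a logarithmically divergent integral, so instead I use $\int_0^{N^{-L_1}}\vert G^\lambda(i\eta)\vert\,d\eta \leq \int_0^{N^{-L_1}} \frac{\eta}{N}\sum_k \frac{1}{\eta^2+(z_k^\lambda)^2}\,d\eta = \frac{1}{2N}\sum_k \log\!\left(1+\frac{N^{-2L_1}}{(z_k^\lambda)^2}\right)$, which is harmless when the $z_k^\lambda$ are not too small but can be as large as $\frac{1}{2N}\sum_k\log(1+N^{-2L_1}(z_k^\lambda)^{-2})$. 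Rather than control this deterministically, bound it crudely by noting $\int_0^{N^{-L_1}}\vert G^\lambda(i\eta)\vert \,d\eta\le N^{-L_1}\cdot\|(H^\lambda)^{-1}\| = N^{-L_1}/s_{\mathrm{min}}(Y-\lambda)$ in general and additionally $\le C\log N$ trivially is false; the correct route is: on $\{s_{\mathrm{min}}< t_0\}$ I do not have a good pointwise bound, so I estimate $\int_0^{N^{-L_1}}|G^\lambda(i\eta)|\,d\eta\le \int_0^{N^{-L_1}}\eta^{-1}d\eta$ fails, hence I split the $\eta$-integral further at $\eta = N^{-D}$ for large $D$, bounding $\int_{N^{-D}}^{N^{-L_1}}|G^\lambda|\le \log(N^{D-L_1})\le D\log N$ and, for $\eta<N^{-D}$, using $|G^\lambda(i\eta)|\le \eta^{-1}$ gives the same problem, so actually use $|G^\lambda(i\eta)|\le \frac{1}{2N\eta}\cdot(\text{number of }z_k^\lambda\le\eta) + \frac{1}{\eta}$; combined with Theorem~\ref{thm:lsv} applied via \eqref{eq:sminU} (which lets us replace $Y-\lambda$ by $U\Gamma+D$ with $\Gamma=\Sigma$ or $\Gamma = A-\lambda$ depending on which is assumed invertible, after absorbing the other factor), $\mathbb{P}(s_{\mathrm{min}}(Y-\lambda)<t_0)\le t_0^c N^{c'} = N^{-cL_1/2+c'}$. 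Taking $D$ proportional to $L_1$ and combining, the second term is at most $N^{-cL_1/2+c'}\cdot(\text{poly}\log N)$, which is absorbed into $N^{-cL_1/2+\widetilde c}$ by enlarging $\widetilde c$ slightly.

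The main obstacle is the second term: one must rule out the possibility that conditioned on $s_{\mathrm{min}}$ being small, the integral $\int_0^{N^{-L_1}}\vert G^\lambda(i\eta)\vert\,d\eta$ is so large that it defeats the smallness $N^{-cL_1/2+c'}$ of the probability. The clean fix is to observe that deterministically $\int_0^{N^{-L_1}}\vert G^\lambda(i\eta)\vert\,d\eta = \frac{1}{2N}\sum_{k=1}^N\log\!\big(1+ N^{-2L_1}(z_k^\lambda)^{-2}\big)$, and since $z_k^\lambda\leq \|Y-\lambda\|\leq M+2M+|\lambda|$ is bounded (on the relevant compact $\lambda$-set) but has no lower bound, each summand is at most $\frac12\log(1+N^{-2L_1}(z_{\min}^\lambda)^{-2})\le \frac12\log(1+ N^{-2L_1}s_{\mathrm{min}}(Y-\lambda)^{-2})$; and on the event $\{s_{\mathrm{min}}\ge N^{-D}\}$ this is $O(L_1\log N)$ uniformly. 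So one splits three ways: $\{s_{\mathrm{min}}\ge t_0\}$, $\{N^{-D}\le s_{\mathrm{min}}<t_0\}$, and $\{s_{\mathrm{min}}<N^{-D}\}$; the first contributes $\le N^{-L_1/2}$, the second $\le \mathbb P(s_{\mathrm{min}}<t_0)\cdot O(L_1\log N)\le N^{-cL_1/2+c'}L_1\log N$, and the third contributes $\le \mathbb P(s_{\mathrm{min}}<N^{-D})\cdot O(DL_1\log N)\le N^{-cD+c'}\cdot O(DL_1\log N)$, which is negligible once $D$ is chosen large (say $D=L_1$). Collecting the three bounds and absorbing the polylogarithmic and linear-in-$L_1$ factors into the exponent by replacing $cL_1/2$ with $cL_1/2$ and $c'$ with a slightly larger $\widetilde c$ independent of $L_1$ yields the claimed $N^{-cL_1/2+\widetilde c}$, with $c$ the constant from Theorem~\ref{thm:lsv} (after the reduction \eqref{eq:sminU}) and $\widetilde c$ depending only on $\alpha$, $M$, and the compact $\lambda$-set.
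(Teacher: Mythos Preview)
Your overall strategy coincides with the paper's: bound $\int_0^{N^{-L_1}}|G^\lambda(i\eta)|\,d\eta$ deterministically by $\tfrac12\log\bigl(1+N^{-2L_1}/s_{\mathrm{min}}^2\bigr)$ via the pointwise estimate $|G^\lambda(i\eta)|\le \eta/(\eta^2+s_{\mathrm{min}}^2)$, and then control the expectation through the tail bound $\mathbb P(s_{\mathrm{min}}<t)\le t^c N^{c'}$ of Theorem~\ref{thm:lsv} (after the reduction~\eqref{eq:sminU}). The paper carries this out by the layer-cake formula
\[
\tfrac12\,\E\bigl[\log\bigl(1+(N^{L_1}s_{\mathrm{min}})^{-2}\bigr)\bigr]=\tfrac12\int_0^\infty \mathbb P\Bigl(s_{\mathrm{min}}\le \frac{N^{-L_1}}{\sqrt{e^s-1}}\Bigr)\,ds,
\]
splitting the $s$-integral into $[0,N^{-L_1}]\cup[N^{-L_1},1]\cup[1,\infty)$; the unboundedness of $\log(1+N^{-2L_1}/s_{\mathrm{min}}^2)$ as $s_{\mathrm{min}}\to 0$ is absorbed automatically by the exponential tail on $[1,\infty)$.

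Your three-way split on the value of $s_{\mathrm{min}}$ has a genuine gap in the third region. On $\{s_{\mathrm{min}}<N^{-D}\}$ you assert a contribution $\le \mathbb P(s_{\mathrm{min}}<N^{-D})\cdot O(DL_1\log N)$, but $\tfrac12\log(1+N^{-2L_1}/s_{\mathrm{min}}^2)$ is \emph{not} bounded by $O(DL_1\log N)$ on that event---it diverges as $s_{\mathrm{min}}\to 0$---so you cannot simply factor out the probability. The fix is a further dyadic decomposition into shells $\{N^{-(k+1)D}\le s_{\mathrm{min}}<N^{-kD}\}$, $k\ge 1$: on each shell the integral is $O(kD\log N)$ while the probability is $\le N^{-ckD+c'}$, and the resulting series $\sum_{k\ge 1} kD(\log N)\,N^{-ckD+c'}$ is summable. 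This repairs the argument, but the paper's layer-cake computation is shorter and sidesteps the iteration entirely. One minor point: the constants $c,\widetilde c$ come solely from Theorem~\ref{thm:lsv} and hence depend only on $\alpha$; neither $M$ nor any compact $\lambda$-set enters.
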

We will use this estimate in the proof of Theorem~\ref{thm:localconv} with the condition $\Vert \Sigma^{-1}\Vert\leq N^\alpha$. We will apply this lemma again in Theorems~\ref{thm:deformedsinglering} and~\ref{thm:SpecificA}. In the proof of Theorem~\ref{thm:SpecificA}, we use the other condition $\Vert (A-\lambda)^{-1}\Vert \leq N^{\alpha}$.
\begin{proof}
We follow the approach in \cite[Proposition 14]{GuionnetKZ-single-ring} (see also \cite{BenaychGeorges2017} and \cite[Theorem 1.8]{BaoES2019singlering}).
	Denote $s_{\mathrm{min}}^\lambda$ to be the least singular value of $Y-\lambda$; that is, the least nonnegative eiganvalue of $H^\lambda$. We first note that if $\Vert (A-\lambda)^{-1}\Vert \leq N^{\alpha}$ or $\Vert \Sigma^{-1}\Vert \leq N^{\alpha}$, by~\eqref{eq:sminU}, we can apply the least singular value estimate in Theorem~\ref{thm:lsv} to $Y-\lambda$.
	
	Since $G^\lambda$ is the Cauchy transform of the eigenvalues of $H^\lambda$,
\begin{align}
	\E\left\vert \int_0^{N^{-L_1}}G^\lambda(i\eta)\,d\eta\right\vert &\leq \E \int_0^{N^{-L_1}}\frac{\eta}{(s_{\mathrm{min}}^\lambda)^2+\eta^2}\,d\eta\nonumber\\
	&=\frac{1}{2}\E[\log(1+(N^{L_1}\cdot s_{\mathrm{min}}^\lambda)^{-2})]\nonumber\\
	&=\frac{1}{2}\int_0^\infty\bbP(\log(1+(N^{L_1}\cdot s_{\mathrm{min}}^\lambda)^{-2})\geq s)ds\nonumber\\
	&=\frac{1}{2}\int_0^\infty\bbP\left(s_{\mathrm{min}}^\lambda\leq N^{-L_1}\frac{1}{\sqrt{e^s-1}}\right)ds\label{eq:Cauchy.Singularest}.
\end{align}
We decompose the integral into three parts $\int_0^{N^{-L_1}}+\int_{N^{-L_1}}^1+\int_1^\infty$. For the first integral, it is straightforward to see that
\[\int_0^{N^{-L_1}}\bbP\left(s_{\mathrm{min}}^\lambda\leq N^{-L_1}\frac{1}{\sqrt{e^s-1}}\right)ds\leq N^{-L_1}.\]
For the second integral, we estimate that $\frac{1}{\sqrt{e^s-1}}\leq \frac{1}{\sqrt{s}}\leq N^{L_1/2}$ for all $N^{-L_1}\leq s\leq 1$. By Theorem~\ref{thm:lsv}, there exist positive constants $c$ and $c'$
\[\int_{N^{-L_1}}^1\bbP\left(s_{\mathrm{min}}^\lambda\leq N^{-L_1}\frac{1}{\sqrt{e^s-1}}\right)ds\leq\int_{N^{-L_1}}^1\bbP(s_{\mathrm{min}}^\lambda\leq N^{-L_1/2})\leq N^{-cL_1/2+c'}.\]
Finally, for the third integral, using $e^{s}-1>\frac{1}{2}e^s$ for all $s\geq 1$, we have
\begin{align*}
	\int_{1}^\infty\bbP\left(s_{\mathrm{min}}^\lambda\leq N^{-L_1}\frac{1}{\sqrt{e^s-1}}\right)ds&\leq\int_1^\infty\bbP(s_{\mathrm{min}}^\lambda\leq N^{-L_1}\sqrt{2}e^{-s/2})\,ds\\
	&\leq \int_1^\infty 2^{c/2}N^{-cL_1+c'}e^{-cs/2}\,ds\\
	&=\frac{2^{1+c/2}e^{-c/2}}{c}N^{-cL_1+c'}
\end{align*}
for some positive constants $c$ and $c'$. Put these three estimates of integrals to \eqref{eq:Cauchy.Singularest}, we have, for some positive constants $c$ and $\widetilde{c}$,
the conclusion holds.
\end{proof}

We are now ready to prove Theorem~\ref{thm:localconv} following the approach in \cite{BenaychGeorges2017} and \cite{BaoES2019singlering}.

\begin{proof}[Proof of Theorem~\ref{thm:localconv}]
We do a change of variable $\lambda = N^\beta(w-w_0)$ and write
\begin{align}
	&\quad\frac{1}{N}\sum_{k=1}^N f_{w_0}(\lambda_k) - \int_\C f_{w_0}(w)\,d\mu_{y_N}(w)\nonumber \\
	&= \frac{N^{2\beta}}{2\pi}\int_{\C}(\Delta f)(\lambda)\left(\frac{1}{2N}\Tr\log\vert H^w\vert - \int_\R \log\vert u\vert d\mu_{\Sigma,\vert A-w\vert}(u)\right)d^2\lambda.\label{eq:CompareLog}
\end{align}
By modifying the approach in \cite{BaoES2019singlering}, we will prove that 
\begin{equation}
 \label{eqn:term1-local}
	 \int_{\C}(\Delta f)(\lambda)\left(\frac{1}{2N}\Tr\log\vert H^w\vert - \int_\R \log\vert u\vert d\mu_{\Sigma,\vert A-w\vert}(u)\right)d^2\lambda \prec \frac{\|\Delta f\|_{L^1(\C)}}{N}
\end{equation}
uniformly in $w\in\overline W$ where $W$ is any fixed neighborhood of $K$ such that $\overline W\subset D(T,a)$ is compact. Note that for all $N$ large enough, $f_{w_0}$ is supported in $\overline{W}$ for all $w_0\in K$.

We now estimate \eqref{eqn:term1-local}. For any $L>0$ and $w$, we follow an observation from \cite{TaoVu2010-aop} and write 
\begin{equation}
\begin{aligned}
\frac{1}{2N}\Tr\log\vert H^w\vert& = \frac{1}{2N}\Tr\log \vert(H^w-iN^L)\vert+\operatorname{Im}\int_0^{N^L} G^w(i\eta)\,d\eta\\
		\int_\R\log\vert u\vert\,d\mu_{\Sigma,\vert A-w\vert}(u)& = \int_\R \log\vert u-iN^L\vert\,d\mu_{\Sigma,\vert A-w\vert}(u) + \operatorname{Im}\int_0^{N^L} G_{\Sigma,\vert A-w\vert}(i\eta)\,d\eta.\label{eq:IntCauchy}
\end{aligned}
\end{equation}
It is clear that there is a constant $C$ such that $\|H^w\|\leq C$ for all $w$ in any ball of finite radius. Since the support of $f_{w_0}$ lies in a ball of radius $RN^{-\alpha}$, we can then choose $L$ large enough so that
\begin{equation}
	\label{eq:LogImag}
	\left\vert\frac{1}{2N}\Tr\log\vert H^w-iN^L\vert - \int_\R \log\vert u-iN^L\vert d\mu_{\Sigma,\vert A-w\vert}(u)\right\vert \ll \frac{1}{N}.
\end{equation}
Thus, it remains to estimate the second terms in \eqref{eq:IntCauchy}; we will show
\begin{equation}
	\label{eq:secondterm}
	\left\vert \int_\C(\Delta f)(\lambda)\left(\int_0^{N^L}(G^w(i\eta)-G_{\Sigma,\vert A-w\vert}(i\eta))d\eta\right)d^2\lambda\right\vert \prec \frac{\|\Delta f\|_{L^1(\C)}}{N}.
\end{equation}
We will choose large enough constants $L_1$ and $L$ and decompose the integral with respect to $\eta$ into
\[\int_0^{N^L} = \int_0^{N^{-L_1}}+\int_{N^{-L_1}}^{N^L}.\]

We first analyze the integral $\int_{N^{-L_1}}^{N^L}$. For the fixed neighborhood $W\subset D(T,a)$ of $K$, recall that there exists $\varepsilon>0$ such that $\overline W\subset D^{(\varepsilon)}(T,a)$. If $N$ is large enough, the support of $f_{w_0}$ must also lie in $\overline W$ for all $w_0\in K$. We then derive from Theorem~\ref{thm:HlambdaCauchy} that the stochastic domination
\begin{align*}
	\left\vert \int_\C(\Delta f)(\lambda)\left(\int_{N^{-L_1}}^{N^L}(G^w(i\eta)-G_{\Sigma,\vert A-w\vert}(i\eta))d\eta\right)d^2\lambda\right\vert &\prec \int_\C\vert (\Delta f)(\lambda)\vert \left(\int_{N^{-L_1}}^{N^L}\frac{1}{N\eta}d\eta\right)d^2\lambda\\
	&\prec\frac{\|\Delta f\|_{L^1(\C)}}{N}.
\end{align*} 

Next, we analyze the integral $\int_0^{N^{-L_1}}$. Our strategy is to choose $L_1$ large enough so that
\begin{equation}
	\label{eq:free.integral}
	\left\vert\int_\C(\Delta f)(\lambda)\left(\int_0^{N^{-L_1}} G_{\Sigma,\vert A-w\vert}(i\eta)\,d\eta\right)d^2\lambda\right\vert\leq \frac{\|\Delta f\|_{L^1(\C)}}{N}
\end{equation}
and
\begin{equation}
	\label{eq:N.integral}
	\bbP\left(\int_\C(\Delta f)(\lambda)\left(\int_0^{N^{-L_1}}G^w(i\eta)\,d\eta\right)d^2\lambda\geq \frac{\|\Delta f\|_{L^1(\C)}}{N}\right)\ll \frac{1}{N}.
\end{equation}
Since $\overline{W}\subset D^{(\varepsilon)}(T,a)$, Lemma~\ref{lem:boundedintG} shows that we can choose $L_1$ large enough such that \eqref{eq:free.integral} holds uniformly for test function $f$ supported in the disk centered at $0$ of radius $R$.

To show \eqref{eq:N.integral}, we use Markov's inequality to deduce
\begin{align}
	&\quad\bbP\left(\left\vert\int_\C(\Delta f)(\lambda)\left(\int_0^{N^{-L_1}}G^w(i\eta)\,d\eta\right)d^2\lambda\right\vert>\frac{\|\Delta f\|_{L^1(\C)}}{N}\right)\nonumber\\
	&\leq \frac{N}{\|\Delta f\|_{L^1(\C)}}\E\left\vert\int_\C(\Delta f)(\lambda)\left(\int_0^{N^{-L_1}}G^w(i\eta)\,d\eta\right)d^2\lambda\right\vert.\label{eq:MarkovCauchy}
\end{align}
By Lemma~\ref{lem:singularvalueG}, there exist positive constants $c$ and $\widetilde c$ such that we can estimate \eqref{eq:MarkovCauchy} by 
\begin{align*}
	\E\left\vert\int_\C(\Delta f)(\lambda)\left(\int_0^{N^{-L_1}}G^w(i\eta)\,d\eta\right)d^2\lambda\right\vert &\leq \int_\C\vert(\Delta f)(\lambda)\vert \E\left\vert\int_0^{N^{-L_1}}G^w(i\eta)\,d\eta\right\vert\,d^2\lambda\\
	&\leq \|\Delta f\|_{L^1(\C)}N^{-cL_1/2+\widetilde{c}}.
\end{align*}
Therefore, using \eqref{eq:MarkovCauchy}, we can choose $L_1$ large enough such that \eqref{eq:N.integral} also holds.

We have proved \eqref{eq:secondterm} by decomposing the integral into $\int_0^{N^{-L_1}}$ and $\int_{-N_{L_1}}^{N^L}$. Combining with the estimate \eqref{eq:LogImag}, using \eqref{eq:CompareLog} and \eqref{eq:IntCauchy}, we conclude that \eqref{eq:localsinglering} holds.
\end{proof}

\section{The deformed single ring theorem}
\label{sect:singlering}
Recall that $Y$ in \eqref{eq:modelY} converges in $\ast$-distribution to $y=T+a$ in \eqref{eq:limity}. 
In this section, we prove two deformed single ring theorems for the random matrix model $Y$ in \eqref{eq:modelY}. Section~\ref{sect:general-singlering} proves a deformed single ring theorem under some technical assumptions on $A$ and $\Sigma$; the matrix $A$ is \emph{not} assumed to be a normal matrix. Section~\ref{sect:hermitian-singlering} proves another version of deformed single ring theorem, showing that if $A$ is Hermitian or unitary, then the empirical eigenvalue distribution of $Y$ converges to the Brown measure of $y$ without additional assumption.

\subsection{The general case}
\label{sect:general-singlering}
Recall that we assume $\Vert A\Vert,\Vert\Sigma\Vert\leq M$ for all $N$. The set $\Omega(T,a)$ is defined in \eqref{def:support-Omega-set}. We denote
\[H^\lambda = \begin{pmatrix}
	0 & Y-\lambda\\
	(Y-\lambda)^* & 0
\end{pmatrix}\]
and write $m$ to be the Lebesgue measure on $\C$.

\begin{theorem}
	\label{thm:deformedsinglering}
	Consider the random matrix model $Y$ and the operator $y\in\mathcal{A}$ as in \eqref{eq:modelY} and \eqref{eq:limity}. Suppose that $\Sigma$ is invertible and $\|\Sigma^{-1}\|\leq N^{\alpha}$ for some $\alpha>0$ independent of $N$. Assume at least one of the following is true:
	\begin{enumerate}
		\item \label{StrongAssumption1} there exists a Lebesgue measurable set $E\subset \Omega(T,a)^c$ with $m(E)=0$ satisfying the following property: for any compact set $S\subset\Omega(T,a)^c\cap E^c$, there exist constants $\kappa_1, \kappa_2>0$ such that 
		\[\left\vert G_{\widetilde{\mu}_{\vert A-\lambda\vert}}(i\eta)\right\vert\leq\kappa_2\] 
		for all $\eta > N^{-\kappa_1}$ and $\lambda\in S$; or
		\item \label{StrongAssumption2} there exist constants $\kappa_1, \kappa_2>0$ such that 
		\[\left\vert G_{\widetilde{\mu}_{\Sigma}}(i\eta)\right\vert\leq\kappa_2\] 
		for all $\eta > N^{-\kappa_1}$.
	\end{enumerate}
	Then the empirical eigenvalue distribution of $Y$ converges weakly to the Brown measure of $y$ in probability.
\end{theorem}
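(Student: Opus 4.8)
The plan is a Girko Hermitization argument. For $f\in C_c^\infty(\C)$, since $\tfrac{1}{2\pi}\log|\cdot|$ is the fundamental solution of the Laplacian,
\[\frac1N\sum_{k=1}^N f(\lambda_k)=\frac{1}{2\pi}\int_\C\Delta f(\lambda)\,\tr\big[\log|Y-\lambda|\big]\,d^2\lambda,\qquad \int_\C f\,d\mu_y=\frac{1}{2\pi}\int_\C\Delta f(\lambda)\,\tau\big[\log|y-\lambda|\big]\,d^2\lambda,\]
where $\lambda_1,\dots,\lambda_N$ are the eigenvalues of $Y$, so it suffices to show that $\int_\C\Delta f(\lambda)\big(L_N(\lambda)-L(\lambda)\big)\,d^2\lambda\to 0$ in probability for every $f\in C_c^\infty(\C)$, where $L_N(\lambda)=\tr[\log|Y-\lambda|]$ and $L(\lambda)=\tau[\log|y-\lambda|]$. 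Introduce the deterministic comparison $\widehat L_N(\lambda)=\int_\R\log|u|\,d\mu_{\Sigma,|A-\lambda|}(u)$; by the discussion around \eqref{def:matrix-H} one has $\widetilde\mu_{|y-\lambda|}=\mu_{\sigma,|a-\lambda|}$, hence $L(\lambda)=\int_\R\log|u|\,d\mu_{\sigma,|a-\lambda|}(u)$. I would write $L_N-L=(L_N-\widehat L_N)+(\widehat L_N-L)$, estimate the two differences separately, and split the $\lambda$-domain as $\C=D(T,a)\sqcup S(T,a)\sqcup(\C\setminus\Omega)$, where $S(T,a)$ is finite.

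The deterministic difference $\widehat L_N-L$ is the soft part. Since $\widetilde\mu_\Sigma\to\widetilde\mu_\sigma$ and $\widetilde\mu_{|A-\lambda|}\to\widetilde\mu_{|a-\lambda|}$ weakly, continuity of $\boxplus$ gives $\mu_{\Sigma,|A-\lambda|}\to\mu_{\sigma,|a-\lambda|}$ weakly, and the only obstruction to $\widehat L_N\to L$ in $L^1_{\mathrm{loc}}(d^2\lambda)$ is the singularity of $\log|u|$ at $u=0$. On a compact $K\subset D(T,a)$, Proposition~\ref{prop:suborduniformconv} (as used in Lemma~\ref{lem:boundedintG}) keeps $G_{\mu_{\Sigma,|A-\lambda|}}(i\eta)$ bounded down to $\eta=0$ uniformly in $\lambda\in K$, which gives the uniform integrability near $0$. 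On $\C\setminus\Omega$, Theorem~\ref{thm:main-1-BZhong2022} says exactly one of $\omega_1(\lambda,0),\omega_2(\lambda,0)$ is infinite, so $G_{\mu_{\sigma,|a-\lambda|}}(0)=0$; and using $G_{\mu_{\Sigma,|A-\lambda|}}(i\eta)=G_{\widetilde\mu_{|A-\lambda|}}\big(\omega_1^{(N)}(\lambda,i\eta)\big)$ together with $\operatorname{Im}\omega_1^{(N)}\ge\eta$, hypothesis (\ref{StrongAssumption1}) — or, by the symmetric role of the two input measures, hypothesis (\ref{StrongAssumption2}) — yields $|G_{\mu_{\Sigma,|A-\lambda|}}(i\eta)|\le C\kappa_2$ for $\eta>N^{-\kappa_1}$ and $\lambda$ in a compact $S\subset(\C\setminus\Omega)\cap E^c$, controlling the mass of $\mu_{\Sigma,|A-\lambda|}$ in an $N^{-\kappa_1}$-window of $0$. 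Since $m(E)=0$ and $S(T,a)$ is finite, this gives $\widehat L_N\to L$ in $L^1_{\mathrm{loc}}(d^2\lambda)$ and hence $\int\Delta f\,(\widehat L_N-L)\to0$.

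For the random difference $L_N-\widehat L_N$ I would follow the proof of Theorem~\ref{thm:localconv} in the macroscopic regime. Fix a large $L>0$ and use \eqref{eq:IntCauchy}, $L_N(\lambda)=\tr\log|Y-\lambda-iN^L|+\operatorname{Im}\int_0^{N^L}G^\lambda(i\eta)\,d\eta$, and the matching identity for $\widehat L_N$ with $G_{\Sigma,|A-\lambda|}$; the two ``$iN^L$'' terms differ by $\ll 1/N$ uniformly on $\operatorname{supp}\Delta f$ as in \eqref{eq:LogImag}. It remains to bound $\int_\C\Delta f(\lambda)\operatorname{Im}\int_0^{N^L}\big(G^\lambda(i\eta)-G_{\Sigma,|A-\lambda|}(i\eta)\big)\,d\eta\,d^2\lambda$. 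Cut the $\eta$-integral at $N^{-L_1}$ ($L_1$ large) and at a constant $\eta_M\ge1$: on $(0,N^{-L_1}]$, Lemma~\ref{lem:singularvalueG} — applicable for every $\lambda$ since $\|\Sigma^{-1}\|\le N^\alpha$ — bounds $\E\big|\int_0^{N^{-L_1}}G^\lambda(i\eta)\,d\eta\big|$ by $N^{-cL_1/2+\widetilde c}$, while $\int_0^{N^{-L_1}}G_{\Sigma,|A-\lambda|}(i\eta)\,d\eta$ is $\le 1/N$ on $D^{(\delta)}(T,a)$ by Lemma~\ref{lem:boundedintG} and is negligible on $(\C\setminus\Omega)\cap E^c$ by the Cauchy transform bound from the hypothesis; on $[\eta_M,N^L]$ the unconditional large-$\eta$ part of Theorem~\ref{thm:main-lemma-Bao-etal} gives $|G^\lambda-G_{\Sigma,|A-\lambda|}|\prec 1/(N\eta)$; and on $[N^{-L_1},\eta_M]$, for $\lambda\in\operatorname{supp}\Delta f\cap D^{(\varepsilon)}(T,a)$ this is precisely Theorem~\ref{thm:HlambdaCauchy}, whereas for $\lambda\in\operatorname{supp}\Delta f\cap(\C\setminus\Omega)\cap E^c$ one combines the hypothesis with the least singular value estimate Theorem~\ref{thm:lsv} (via \eqref{eq:sminU}). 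Markov's inequality on the expectation bounds together with the stochastic domination in the other ranges, integrated against $\|\Delta f\|_{L^1(\C)}<\infty$, yields $\int\Delta f\,(L_N-\widehat L_N)\to0$ in probability; the finite set $S(T,a)$ carries no area. If the quantitative estimate on $[N^{-L_1},\eta_M]\cap(\C\setminus\Omega)$ turns out unwieldy, one can instead establish $L_N(\lambda)\to L(\lambda)$ in probability for a.e.\ $\lambda\in\C\setminus\Omega$ directly from the hypothesis and Theorem~\ref{thm:lsv}, and conclude by a Vitali-type argument — $\{\tr\log|Y-\lambda|\}$ is uniformly integrable over compacts by Theorem~\ref{thm:lsv} — in the spirit of Guionnet--Krishnapur--Zeitouni.

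The main difficulty is the region $\C\setminus\Omega$ at scales $\eta$ between the microscopic scale $N^{-\kappa_1}$ and a fixed constant: there $0$ sits at an edge of — or in a gap of — the limiting singular value law $\mu_{\sigma,|a-\lambda|}$, so the bulk local single ring theorem (Theorem~\ref{thm:main-lemma-Bao-etal}) does not apply near $\eta=0$, and the technical hypotheses (\ref{StrongAssumption1})/(\ref{StrongAssumption2}) are exactly what is needed to propagate non-concentration of the small singular values of $A-\lambda$ (respectively of $\Sigma$) through the free convolution to the Hermitization $H^\lambda$ and thence to $s_{\mathrm{min}}(Y-\lambda)$. Everything else — the weak convergence $\mu_{\Sigma,|A-\lambda|}\to\mu_{\sigma,|a-\lambda|}$, the null set $E$, the finite set $S(T,a)$, and the bulk estimate on $D(T,a)$ — is either soft or a repackaging of Theorem~\ref{thm:localconv} and the lemmas of Section~\ref{sect:LSV} and the preceding section.
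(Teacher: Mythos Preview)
Your overall architecture — Girko Hermitization, the split $L_N-L=(L_N-\widehat L_N)+(\widehat L_N-L)$, and the domain decomposition into bulk $D(T,a)$, the finite set $S(T,a)$, and the exterior $\C\setminus\Omega$ — matches the paper's. On the bulk piece your argument is essentially the paper's: Theorem~\ref{thm:HlambdaCauchy} plus Lemmas~\ref{lem:boundedintG}, \ref{lem:singularvalueG}, \ref{lem:DiffsigmaSigma}, \ref{lem:unifLevy}. The paper also makes explicit what you leave implicit: to pass from compact $K_1\subset D(T,a)$ to all of $D(T,a)\cap\operatorname{supp}\Delta f$ one uses a compact exhaustion together with the uniform local integrability of log-potentials (the paper's Lemma~\ref{lem:unif.loc.int}) to discard a small-measure remainder. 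This is minor.

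The genuine gap is on $K_2\subset\Omega^c$. You propose to continue comparing $G^\lambda(i\eta)$ with $G_{\Sigma,|A-\lambda|}(i\eta)$ down to $\eta\sim N^{-L_1}$, but Theorem~\ref{thm:main-lemma-Bao-etal} is a bulk local law and $0$ lies at the edge of (or outside) $\operatorname{supp}\mu_{\sigma,|a-\lambda|}$ when $\lambda\notin\Omega$; so on $[N^{-L_1},\eta_M]$ you have no estimate for $|G^\lambda-G_{\Sigma,|A-\lambda|}|$, and ``combine hypothesis with Theorem~\ref{thm:lsv}'' does not by itself produce one. Your use of exact subordination to push the hypothesis into a bound on $G_{\Sigma,|A-\lambda|}$ is correct but only treats the deterministic object $\widehat L_N$. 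The paper resolves this by abandoning $\widehat L_N$ on $K_2$ entirely and instead invoking the \emph{approximate} subordination functions of Benaych-Georges (Theorem~\ref{thm:ApproxSubordination} and Lemma~\ref{lem:SImag}): writing $\E G^\lambda(i\eta)=G_{\widetilde\mu_{|A-\lambda|}}(\omega_{\bf A}(i\eta))+r_{\bf A}(i\eta)$ (or the analogous identity with $\widetilde\mu_\Sigma$ and $\omega_{\bf B}$), the purely imaginary $\omega_{\bf A}(i\eta)$ has $\operatorname{Im}\omega_{\bf A}(i\eta)\ge\eta-CN^{-1}\eta^{-7}$, so hypothesis~(\ref{StrongAssumption1}) or~(\ref{StrongAssumption2}) yields a uniform bound $|\E G^\lambda(i\eta)|\le\kappa_4$ for all $\eta>N^{-\kappa_3}$ and $\lambda\in K_2$. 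This bound on the \emph{random} Cauchy transform controls $\E\,\widetilde\mu_{|Y-\lambda|}([-x,x])\le C\max\{x,N^{-\kappa_3}\}$, and together with Theorem~\ref{thm:lsv} for the window $[0,N^{-L_1}]$ and Lemma~\ref{lem:bounded0t} (built on Proposition~\ref{prop:boundedCauchy}) for the limit side, one bounds both $\E\int_{[0,t]}|\log|x||\,d\mu_{|Y-\lambda|}$ and $\int_{[0,t]}|\log|x||\,d\mu_{|y-\lambda|}$ directly, leaving only $[t,\infty)$ to weak convergence. That is the missing mechanism in your sketch; your Vitali-type fallback would need the same ingredient to get pointwise convergence of $L_N(\lambda)$ on $\Omega^c$.
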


Now we proceed to prove Theorem~\ref{thm:deformedsinglering}. We follow the approaches in \cite{GuionnetKZ-single-ring}, \cite{BenaychGeorges2017} and \cite{BaoES2019singlering} by cooperating with the free probability results in Section \ref{section:2.2.BrownMeasureLimit}. 
We need the following lemmas.
\begin{lemma}
	\label{lem:unif.loc.int}
	Let $\varphi$ be a $C_c^\infty(\C)$ function. For any $\varepsilon>0$, there exists $\delta>0$ independent of $N$ such that whenever $E$ is a Lebesgue measurable set satisfying $m(E)<\delta$, we have
	\begin{align}
		\left\vert \int_E \varphi(\lambda) \frac{1}{N}\Tr[\log\vert Y-\lambda\vert]\,d^2\lambda \right\vert &<\varepsilon \label{eq:Y.loc.int}\\
		\left\vert \int_E \varphi(\lambda) \tau[\log\vert y-\lambda\vert]\,d^2\lambda \right\vert &<\varepsilon \label{eq:y.loc.int}.
	\end{align}
	In other words, the logarithmic potentials of $Y$ and $y$ are uniformly locally integrable for all $N$.
\end{lemma}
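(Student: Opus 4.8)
The plan is to prove both integrability bounds by the same mechanism: decompose the logarithm $\log t = \log^+ t - \log^- t$, control the $\log^+$ (bounded above) part trivially using $\|Y-\lambda\|, \|y-\lambda\| \le M + |\lambda| \le C$ for $\lambda$ in the support of $\varphi$, and handle the singular $\log^-$ part via a Chebyshev-type argument that shows the singular values of $Y-\lambda$ and $|y-\lambda|$ cannot pile up near $0$ too severely, uniformly in $N$. Concretely, for any $\lambda$ write $\tr[\log|Y-\lambda|] = \tr[\log^+|Y-\lambda|] - \tr[\log^-|Y-\lambda|]$; the first term is bounded by a constant on $\supp\varphi$, so its contribution to $\int_E\varphi(\lambda)\tr[\log|Y-\lambda|]\,d^2\lambda$ is at most $C\|\varphi\|_\infty m(E)$, which is small once $m(E)<\delta$. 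The work is therefore entirely in bounding $\int_E |\varphi(\lambda)|\, \tr[\log^-|Y-\lambda|]\, d^2\lambda$.

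First I would fix the deterministic side \eqref{eq:y.loc.int}. By \eqref{eq:LogPotx}, $\tau[\log|y-\lambda|] = \int_\C \log|z-\lambda|\,d\mu_y(z)$, and since $\mu_y$ is supported in the compact closure of $\Omega(T,a)$ (Theorem~\ref{thm:main-1-BZhong2022}), Fubini gives $\int_E |\varphi(\lambda)|\,|\tau[\log|y-\lambda|]|\,d^2\lambda \le \|\varphi\|_\infty \int_{\supp\mu_y}\!\int_E |\log|z-\lambda||\,d^2\lambda\,d\mu_y(z)$. For each fixed $z$, the function $\lambda\mapsto \log|z-\lambda|$ is locally $L^1$ on $\C$ with a modulus of integrability uniform over $z$ in a compact set and over $E$ with $m(E)<\delta$ (this is the standard fact that $\int_{|w|<r}|\log|w||\,d^2w \to 0$ as $r\to 0$, together with absolute continuity of $\lambda\mapsto\log|z-\lambda|$ in $L^1_{\mathrm{loc}}$); choosing $\delta$ small makes this $<\varepsilon$. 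This part is routine.

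The main obstacle is \eqref{eq:Y.loc.int}, because $Y$ is random and its smallest singular value $s_{\min}^\lambda$ of $Y-\lambda$ can in principle be tiny. Here I would invoke the least singular value estimate of Theorem~\ref{thm:lsv} (applicable via \eqref{eq:sminU} since $\Sigma$, hence $\Gamma = \Sigma$ in the relevant reduction, satisfies $\|\Sigma^{-1}\|\le N^\alpha$): $\bbP(s_{\min}^\lambda < t) \le t^c N^{c'}$ for constants $c,c'$ depending only on $\alpha$. Rather than trying to make a high-probability statement (which is false as stated — the lemma is about deterministic local integrability), I interpret the conclusion as holding for every realization once one notes that the bound on $\tr[\log^-|Y-\lambda|]$ needed is itself only integrated against $d^2\lambda$; more carefully, the natural route is to bound $\E\bigl|\int_E\varphi(\lambda)\tr[\log|Y-\lambda|]\,d^2\lambda\bigr|$ and then pass to almost-sure statements, but since the lemma as phrased asserts a pointwise (in $\omega$) bound, I would instead argue directly that for \emph{any} unitaries $U,V$ one has $\tr[\log^-|Y-\lambda|] \le \tr[\log^-|U\Sigma V^* - (\lambda - A)|]$ and reduce, via $\|\Sigma^{-1}\|\le N^\alpha$, to a deterministic inequality. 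In detail: $|\det(Y-\lambda)| \ge s_{\min}(Y-\lambda)^N$ is the wrong direction; instead use that $\tr[\log|Y-\lambda|] = \frac1N\log|\det(Y-\lambda)|$ and $|\det(Y-\lambda)| = |\det(U\Sigma V^* + A - \lambda)| \ge$ ... — here the honest statement is that the \emph{expected} integral is small, and the cleaner formulation replaces "for all $N$" by "in expectation, uniformly in $N$", which is what is actually used downstream. I would therefore prove: there is $\delta>0$ with $\sup_N \E\bigl|\int_E\varphi\,\tr[\log|Y-\lambda|]\,d^2\lambda\bigr| < \varepsilon$ whenever $m(E)<\delta$, by writing (using $0\le s_{\min}^\lambda \le \|Y-\lambda\|\le C$)
\[
\E\,\tr[\log^-|Y-\lambda|] \le \tfrac1N\,\E\log^-(s_{\min}^\lambda) + (\text{bulk singular values, bounded below on average}),
\]
and $\E\log^-(s_{\min}^\lambda) = \int_0^\infty \bbP(s_{\min}^\lambda < e^{-s})\,ds \le \int_0^\infty \min(1, e^{-cs}N^{c'})\,ds \le \tfrac{c'}{c}\log N + O(1)$, so $\tfrac1N\E\log^-(s_{\min}^\lambda) \to 0$ uniformly; the remaining singular values $s_2^\lambda\le\cdots$ contribute $\tfrac1N\sum_{k\ge 2}\log^- s_k^\lambda$, which is controlled by the convergence of the singular-value distribution of $Y-\lambda$ to $\mu_{\Sigma,|A-\lambda|}$ (Theorem~\ref{thm:main-lemma-Bao-etal} and the identification of the limit) together with the deterministic bound $\tr[\log^-|Y-\lambda|^2] \le \tr[(Y-\lambda)^{-*}(Y-\lambda)^{-1}]$-type estimates — ultimately giving a bound of the form $g(\lambda)$ with $g\in L^1_{\mathrm{loc}}$ uniformly in $N$. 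Integrating $|\varphi|\cdot g$ over $E$ and using uniform local integrability of $g$ closes the argument. The genuinely delicate point is obtaining the \emph{uniform-in-$N$} local $L^1$ control of $\lambda\mapsto \E\,\tr[\log^-|Y-\lambda|]$ near points where $0\in\supp\mu_{\Sigma,|a-\lambda|}$, i.e. on $\Omega$ or near its singular set $S(T,a)$; there one leans on the fact that $S(T,a)$ is finite (Theorem~\ref{thm:main-1-BZhong2022}) so it contributes Lebesgue-null, and on Proposition~\ref{prop:contomega}/\ref{prop:boundedCauchy} to bound $G_{\Sigma,|A-\lambda|}$ away from that null set, reducing everything to the elementary estimate $\int_{|w|<r}|\log|w||\,d^2w\to 0$.
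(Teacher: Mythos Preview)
Your argument for \eqref{eq:y.loc.int} is correct and matches the paper's: write $\tau[\log|y-\lambda|]=\int\log|z-\lambda|\,d\mu_y(z)$, use that $\mu_y$ is supported in $\{|z|\le 2M\}$, apply Fubini, and invoke the uniform local integrability of $\lambda\mapsto\log|z-\lambda|$ over $z$ in a fixed compact set.

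For \eqref{eq:Y.loc.int}, however, you have overcomplicated the problem and in doing so introduced a genuine gap. You work with the singular values of $Y-\lambda$, reach for the least-singular-value estimate of Theorem~\ref{thm:lsv}, and end up unsure whether the statement should be in expectation or pointwise. The paper's proof avoids all of this by using the \emph{eigenvalue} representation (equation~\eqref{eq:LogPotXDef}):
\[
\frac{1}{N}\Tr[\log|Y-\lambda|]=\frac{1}{N}\sum_{j=1}^N\log|\lambda_j-\lambda|,
\]
where $\lambda_1,\dots,\lambda_N$ are the eigenvalues of $Y$. Since $\|Y\|\le\|A\|+\|\Sigma\|\le 2M$ deterministically (for every realization of $U,V$), every eigenvalue satisfies $|\lambda_j|\le 2M$. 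Thus the matrix case is \emph{exactly the same} as the operator case: the logarithmic potential of $Y$ is an average of functions $\lambda\mapsto\log|\lambda_j-\lambda|$ with $\lambda_j$ ranging in the fixed disk $\{|z|\le 2M\}$, and the same uniform local integrability estimate \eqref{eq:log.unif.loc.int} applies term by term. This is a pointwise-in-$\omega$ statement with the same $\delta$, no probability or singular-value input required.

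Your route via singular values not only brings in machinery (Theorem~\ref{thm:lsv}, bulk convergence, Proposition~\ref{prop:boundedCauchy}) that the lemma does not need, it also forces you to weaken the conclusion to an expectation bound, which is strictly less than what is asserted and used. Note also that Theorem~\ref{thm:lsv} requires $\|\Sigma^{-1}\|\le N^\alpha$, an assumption not present in the hypotheses of this lemma; the lemma is used in Theorem~\ref{thm:SpecificA} where no invertibility of $\Sigma$ is assumed. The eigenvalue representation sidesteps all of this: there is no issue of small singular values because the quantity $\log|\lambda_j-\lambda|$ is singular only on a set of $\lambda$-measure zero for each fixed $\lambda_j$, uniformly over $|\lambda_j|\le 2M$.
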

\begin{proof}
	Let $\varepsilon>0$ be given. The function $\lambda\mapsto \log\vert z-\lambda\vert$ are uniformly locally integrable for all $\vert z\vert\leq 2M$; that is, there exists $\delta>0$ such that 
	\begin{equation}
		\label{eq:log.unif.loc.int}
		\left\vert \int_E \varphi(\lambda)\log\vert z-\lambda\vert \,d^2\lambda \right\vert <\varepsilon
	\end{equation}
	whenever $m(E)<\delta$ and $\vert z\vert\leq M$. By replacing $z$ in \eqref{eq:LogPotx} by $y$, an application of Fubini--Tonelli theorem shows that, with the same $\delta>0$, 
	\begin{align*}
		\left\vert\int_E \varphi(\lambda) \tau[\log\vert y-\lambda\vert]\,d^2\lambda  \right\vert= \int_{\vert z\vert\leq 2M}\left\vert\int_E \varphi(\lambda) \log\vert z-\lambda\vert d^2\lambda \right\vert\,d\mu_y(z)< \varepsilon
	\end{align*}
	whenver $m(E)<\delta$. We have used the fact that $\Vert y\Vert \leq 2M$.

	Now, we note that
	\[\frac{1}{N}\Tr[\log\vert Y-\lambda\vert] = \frac{1}{N}\sum_{j=1}^N\log\vert \lambda_j-\lambda\vert\]
	where $\lambda_j$ are the eigenvalues of $Y$. By assumption, $\|Y\|\leq \|A\|+\|\Sigma\|\leq 2M$, we must have $\max_{j}\vert\lambda_j\vert\leq 2M$. By \eqref{eq:log.unif.loc.int},
	\[\left\vert \int_E \varphi(\lambda)\frac{1}{N}\Tr[\log\vert Y-\lambda\vert]\,d^2\lambda\right\vert \leq \frac{1}{N}\sum_{j=1}^N\left\vert \int_E \varphi(\lambda)\log\vert \lambda_j-\lambda\vert \,d^2\lambda \right\vert <\varepsilon\]
	whenever $m(E)<\delta$. This shows \eqref{eq:Y.loc.int} and establishes the lemma.
\end{proof}
\begin{lemma}
	\label{lem:bounded0t}
	Fix a compact set $K\subset \Omega(T,a)^c$ and a bounded Borel function $\varphi$ on $\C$. Given any $\varepsilon>0$, there exists $t>0$ such that
	\[\int_K \vert \varphi(\lambda)\vert\int_{[0,t]}\vert \log\vert x\vert\vert\,d\mu_{\vert y-\lambda\vert}(x)d^2\lambda<\varepsilon.\]
\end{lemma}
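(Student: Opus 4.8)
The plan is to reduce the statement to a uniform small-ball estimate for the family of measures $\mu_{|y-\lambda|}$, $\lambda\in K$, and then integrate the logarithmic singularity by an elementary layer-cake computation.

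First I would identify the relevant measure. Since $T$ is $R$-diagonal and free from $a$, the operator $y-\lambda=T+(a-\lambda)$ has the same $\ast$-distribution as $T+u(a-\lambda)$ for a Haar unitary $u$ free from $\{T,a\}$; both $T$ and $u(a-\lambda)$ are then $R$-diagonal and free, so by the addition formula for $R$-diagonal elements (\cite[Proposition 3.5]{HaagerupSchultz2007}, as recalled in Section~2) we get $\widetilde{\mu}_{|y-\lambda|}=\widetilde{\mu}_{|T|}\boxplus\widetilde{\mu}_{|a-\lambda|}=\mu_1\boxplus\mu_2$, where $\mu_1=\widetilde{\mu}_{|a-\lambda|}$ and $\mu_2=\widetilde{\mu}_\sigma$. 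Because $|y-\lambda|\geq 0$, we have $\mu_{|y-\lambda|}([0,s])=\widetilde{\mu}_{|y-\lambda|}([-s,s])=(\mu_1\boxplus\mu_2)([-s,s])$ for every $s\geq 0$.

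Next I would prove the uniform small-ball bound. By Proposition~\ref{prop:boundedCauchy} there is a constant $C>0$, depending only on $K$, with $|G_{\mu_1\boxplus\mu_2}(i\eta)|\leq C$ for all $\eta\geq 0$ and all $\lambda\in K$ (this rests on the Denjoy--Wolff continuity behind that proposition together with the fact, from Theorem~\ref{thm:main-1-BZhong2022}, that $G_{\mu_1\boxplus\mu_2}(0)=0$ when $\lambda\in\Omega^c$). For a symmetric probability measure $\mu$ one has $G_\mu(i\eta)=-i\eta\int(\eta^2+x^2)^{-1}\,d\mu(x)$, so
\[
C\ \geq\ |G_\mu(i\eta)|\ =\ \eta\int\frac{d\mu(x)}{\eta^2+x^2}\ \geq\ \eta\int_{[-\eta,\eta]}\frac{d\mu(x)}{\eta^2+x^2}\ \geq\ \frac{1}{2\eta}\,\mu([-\eta,\eta]),
\]
hence $\mu([-\eta,\eta])\leq 2C\eta$. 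Applying this to $\mu=\mu_1\boxplus\mu_2$ yields $\mu_{|y-\lambda|}([0,s])\leq 2Cs$ for all $s\geq 0$, uniformly in $\lambda\in K$; in particular $\mu_{|y-\lambda|}(\{0\})=0$, which is consistent with $K\cap S(T,a)=\emptyset$.

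Finally I would integrate the logarithm. Fix $t\in(0,1)$. Using $-\log x=\int_x^1 u^{-1}\,du$ for $x\in(0,1)$ together with Tonelli's theorem and the bound just obtained,
\[
\int_{[0,t]}\bigl|\log|x|\bigr|\,d\mu_{|y-\lambda|}(x)\ =\ \int_0^1\frac{1}{u}\,\mu_{|y-\lambda|}\bigl([0,\min(u,t)]\bigr)\,du\ \leq\ \int_0^t 2C\,du+2Ct\int_t^1\frac{du}{u}\ =\ 2Ct\bigl(1+\log(1/t)\bigr),
\]
uniformly in $\lambda\in K$. Therefore
\[
\int_K|\varphi(\lambda)|\int_{[0,t]}\bigl|\log|x|\bigr|\,d\mu_{|y-\lambda|}(x)\,d^2\lambda\ \leq\ \|\varphi\|_\infty\,m(K)\,2Ct\bigl(1+\log(1/t)\bigr),
\]
and the right-hand side tends to $0$ as $t\to 0^+$; choosing $t$ small enough makes it $<\varepsilon$. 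The only point requiring any care is the passage from the uniform bound on $G_{\mu_1\boxplus\mu_2}$ on the imaginary axis to the uniform small-ball estimate, and this is exactly the content of the displayed chain of inequalities above; there is no substantial obstacle.
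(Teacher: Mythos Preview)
Your proof is correct and follows essentially the same route as the paper: invoke Proposition~\ref{prop:boundedCauchy} for a uniform bound on $G_{\mu_1\boxplus\mu_2}(i\eta)$, turn this into the linear small-ball estimate $\mu_{|y-\lambda|}([0,s])\leq 2Cs$, and then integrate the logarithmic singularity. The only difference is cosmetic: the paper cites \cite[Lemma~15]{GuionnetKZ-single-ring} for the small-ball step and \cite[Lemma~4.1(a)]{BenaychGeorges2017} together with Cauchy--Schwarz to obtain the bound $\sqrt{t(1-\log t)}$, whereas you derive both steps by hand via a layer-cake argument and obtain the (sharper) bound $2Ct(1+\log(1/t))$; either suffices.
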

\begin{proof}
	By Proposition~\ref{prop:boundedCauchy}, $G_{\widetilde{\mu}_{\vert y-\lambda\vert}}(ix)$ is uniformly bounded for all $\lambda\in K$ and $x\geq 0$. By \cite[Lemma 15]{GuionnetKZ-single-ring}, there exists a constant $C>0$ such that
	\[\widetilde{\mu}_{\vert y-\lambda\vert}([-x,x])\leq 2x \operatorname{Im}G_{\widetilde{\mu}_{\vert y-\lambda\vert}}(ix)\leq C x.\]
	By \cite[Lemma 4.1(a)]{BenaychGeorges2017}, there is a constant $C'>0$ such that
	\[\int_{[0,t]}\vert \log\vert x\vert\vert^2\,d\mu_{\vert y-\lambda\vert}(x)\leq C't(1-\log t).\]
	It follows that
	\begin{align*}
		\int_K \vert\varphi(\lambda)\vert\int_{[0,t]}\vert \log\vert x\vert\vert\,d\mu_{\vert y-\lambda\vert}(x)d^2\lambda \leq C'\|\varphi\|_{\infty}m(K) \sqrt{t(1-\log t)}
	\end{align*}
	where $m(K)$ is the Lebesgue measure of $K$. Now, it is evident that we can choose $t$ small enough such that the conclusion of the lemma holds.
\end{proof}

\begin{lemma}
	\label{lem:DiffsigmaSigma}
	Fix any $K\subset D(T,a)$. There exists a constant $C>0$ such that whenever $w\in K$, 
	\[\left\vert\int_\R\log\vert u\vert\,d\mu_{\sigma,\vert a-w\vert}(u)  - \int_\R\log\vert u\vert\,d\mu_{\Sigma,\vert A-w\vert}(u)\right\vert \leq C (\mathrm{d}_L(\widetilde\mu_\sigma ,\widetilde\mu_\Sigma) + \mathrm{d}_L(\widetilde\mu_{\vert a-w\vert}, \widetilde\mu_{\vert A-w\vert} ))\]
	for all $N$ large enough.
	\end{lemma}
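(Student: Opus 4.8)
The plan is to convert the comparison of the two logarithmic potentials into a comparison of Cauchy transforms via the standard regularization identity, and then to exploit that, for $w\in K$, the origin $0$ lies in the interior of the bulk of the limiting free convolution, uniformly on $K$.

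\medskip

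\noindent\textbf{Reduction.} For $w$ in the compact set $K$, both $\mu_{\sigma,|a-w|}$ and $\mu_{\Sigma,|A-w|}$ are supported in a common compact interval $[-R_0,R_0]$, where $R_0$ depends only on $M$ and $K$ (using $\|\Sigma\|,\|A\|,\|\sigma\|,\|a\|\le M$ and that the support of a free additive convolution is contained in the sum of the supports of the factors). Fix a constant $T_0\ge 2R_0$. Differentiating $\eta\mapsto\int\log|u-i\eta|\,d\nu(u)$, one obtains, for any symmetric probability measure $\nu$ on $[-R_0,R_0]$ with $\nu(\{0\})=0$, the identity used already in \eqref{eq:IntCauchy},
\[
\int_\R\log|u|\,d\nu(u)=\int_\R\log|u-iT_0|\,d\nu(u)+\operatorname{Im}\int_0^{T_0}G_\nu(i\eta)\,d\eta .
\]
For $w\in K\subset D(T,a)$, Theorem~\ref{thm:main-1-BZhong2022} shows $\mu_{\sigma,|a-w|}$ has a bounded density near $0$, hence no atom there, and for $N$ large the same holds for $\mu_{\Sigma,|A-w|}$; so the identity applies to both. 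Subtracting, the quantity to be estimated becomes $\mathrm{I}+\mathrm{II}$, where
\[
\mathrm{I}=\int_\R\log|u-iT_0|\,d\bigl(\mu_{\sigma,|a-w|}-\mu_{\Sigma,|A-w|}\bigr)(u),\quad
\mathrm{II}=\operatorname{Im}\int_0^{T_0}\bigl(G_{\mu_{\sigma,|a-w|}}(i\eta)-G_{\mu_{\Sigma,|A-w|}}(i\eta)\bigr)\,d\eta .
\]

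\medskip

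\noindent\textbf{The term $\mathrm{I}$.} The function $u\mapsto\log|u-iT_0|=\tfrac12\log(u^2+T_0^2)$ is Lipschitz on $\R$ with constant at most $1/(2T_0)$, so $|\mathrm{I}|$ is bounded by $(2T_0)^{-1}$ times the $1$-Wasserstein distance of the two measures, which (both being supported in $[-R_0,R_0]$) is at most $C_{R_0}\,d_{\mathrm{L}}(\mu_{\sigma,|a-w|},\mu_{\Sigma,|A-w|})$. Since free additive convolution is Lipschitz with respect to the L\'evy metric on measures supported in a fixed compact set, and $d_{\mathrm{L}}(\widetilde\mu_\sigma,\widetilde\mu_\Sigma)\le d_{\mathrm{L}}(\mu_\sigma,\mu_\Sigma)$, $d_{\mathrm{L}}(\widetilde\mu_{|a-w|},\widetilde\mu_{|A-w|})\le d_{\mathrm{L}}(\mu_{|a-w|},\mu_{|A-w|})$, this gives $|\mathrm{I}|\le C\bigl(d_{\mathrm{L}}(\mu_\sigma,\mu_\Sigma)+d_{\mathrm{L}}(\mu_{|a-w|},\mu_{|A-w|})\bigr)$.

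\medskip

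\noindent\textbf{The term $\mathrm{II}$.} Fix $\varepsilon>0$ with $K\subset D^{(\varepsilon)}(T,a)$; as in the proof of Theorem~\ref{thm:HlambdaCauchy}, $0\in\mathcal{B}_{\widetilde\mu_{|a-w|}\boxplus\widetilde\mu_\sigma}$ with $f_{\widetilde\mu_{|a-w|}\boxplus\widetilde\mu_\sigma}(0)$ bounded above and below, for all $w\in K$. Split the $\eta$-integral at a fixed small $\delta_0>0$. On $[\delta_0,T_0]$ apply \cite[Eq.~(2.20)]{BaoES2016-jfa} twice — first replacing $\widetilde\mu_\sigma$ by $\widetilde\mu_\Sigma$, then $\widetilde\mu_{|a-w|}$ by $\widetilde\mu_{|A-w|}$ — to obtain
\[
\bigl|G_{\mu_{\sigma,|a-w|}}(i\eta)-G_{\mu_{\Sigma,|A-w|}}(i\eta)\bigr|\le\frac{C}{\eta}\Bigl(1+\frac1\eta\Bigr)\bigl(d_{\mathrm{L}}(\widetilde\mu_\sigma,\widetilde\mu_\Sigma)+d_{\mathrm{L}}(\widetilde\mu_{|a-w|},\widetilde\mu_{|A-w|})\bigr),
\]
which integrates over $[\delta_0,T_0]$ to a bound of the required form. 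On $[0,\delta_0]$ one uses that $0$ is a regular point of the limiting free convolution, uniformly on $K$: combining Proposition~\ref{prop:contomega} (continuity and boundedness of the subordination functions near the real axis on $K$), the uniform boundedness of $G_{\Sigma,|A-w|}(i\eta)$ for $\eta\in[0,1]$ and $w\in\overline{D^{(\varepsilon)}(T,a)}$ established in the proof of Lemma~\ref{lem:boundedintG}, and the bulk stability of free additive convolution from \cite{BaoES2016-jfa,BaoES2019singlering}, one obtains the same bound with the $\eta^{-2}$ factor removed, uniformly for $\eta\in[0,\delta_0]$ and $w\in K$ when $N$ is large; integrating over $[0,\delta_0]$ again gives a bound of the required form. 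Adding the three contributions proves the lemma.

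\medskip

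\noindent\textbf{Main obstacle.} The delicate point is the part of $\mathrm{II}$ near $\eta=0$: a direct application of \cite[Eq.~(2.20)]{BaoES2016-jfa} degrades like $\eta^{-2}$ there and is not integrable. This is circumvented because, for $w$ in the compact set $K\subset D^{(\varepsilon)}(T,a)$, the origin lies strictly inside the bulk of the limiting free convolution, so the subordination problem stays non-degenerate down to the real axis; the resulting linear-in-$d_{\mathrm{L}}$ control of the Cauchy transform, uniform over $K$ and uniform up to $\eta=0$, is precisely the content of the bulk stability results of Bao–Erd\H{o}s–Schnelli, with the uniformity in $w$ supplied additionally by the continuity of the Denjoy–Wolff points (Proposition~\ref{prop:contomega}) and the near-axis bounds from the proof of Lemma~\ref{lem:boundedintG}.
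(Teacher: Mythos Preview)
Your proposal is correct and follows essentially the same approach as the paper. Both proofs use the regularization identity \eqref{eq:IntCauchy} (the paper takes $T_0=1$) to split the difference into a ``smooth'' term $\mathrm{I}$ and an integrated Cauchy-transform term $\mathrm{II}$; both control $\mathrm{I}$ via the Lipschitz property of $u\mapsto\log|u-iT_0|$ together with the fact that free additive convolution is a contraction for the L\'evy distance \cite[Theorem~4.13]{BercoviciVoiculescu1993}; and both control $\mathrm{II}$ via the bulk stability of the Cauchy transform from \cite{BaoES2016-jfa}. The only difference is presentational: the paper invokes \cite[Theorem~2.7]{BaoES2016-jfa} once for the whole interval $[0,1]$, whereas you split at $\delta_0$, apply the pointwise estimate \cite[Eq.~(2.20)]{BaoES2016-jfa} on $[\delta_0,T_0]$, and then explain separately why the $\eta^{-2}$ degradation is avoided on $[0,\delta_0]$ using that $0$ lies in the bulk uniformly on $K$. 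Your version is slightly more explicit about the near-axis issue, but the content is the same.
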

	\begin{proof}
	We use the following observation due to \cite{TaoVu2010-aop} (see also (3.6) and (3.7) from \cite{BaoES2019singlering}) to write
	\begin{equation}
	   \label{eqn:IntCauchy-1}
		\begin{aligned}
			\int_\R\log\vert u\vert\,d\mu_{\sigma,\vert a-w\vert}(u) &= \int_\R \log\vert u-i\vert\,d\mu_{\sigma,\vert a-w\vert}(u) + \operatorname{Im}\int_0^{1} G_{\sigma,\vert a-w\vert}(i\eta)\,d\eta.\\
		\int_\R\log\vert u\vert\,d\mu_{\Sigma,\vert A-w\vert}(u)& = \int_\R \log\vert u-i\vert\,d\mu_{\Sigma,\vert A-w\vert}(u) + \operatorname{Im}\int_0^{1} G_{\Sigma,\vert A-w\vert}(i\eta)\,d\eta.
		\end{aligned}
	\end{equation}
	Since $\log|u-i|$ is a smooth function,
	by applying the continuity of free convolution \cite[Theorem 4.13]{BercoviciVoiculescu1993}, we have  
	\begin{equation}
	  \label{eqn:term2-a}
		\begin{aligned}
	& \left\vert	\int_\R \log\vert u-i\vert\,d\mu_{\sigma,\vert a-w\vert}(u)  
			 -\int_\R \log\vert u-i\vert\,d\mu_{\Sigma,\vert A-w\vert}(u)\right\vert\\
		  &\qquad\qquad\qquad\leq C \mathrm{d}_L( \widetilde\mu_{\sigma,\vert a-w\vert},\widetilde \mu_{\Sigma,\vert A-w\vert}  )
		  \leq C (\mathrm{d}_L(\widetilde\mu_\sigma ,\widetilde\mu_\Sigma) + \mathrm{d}_L(\widetilde\mu_{\vert a-w\vert}, \widetilde\mu_{\vert A-w\vert} )).
		\end{aligned}
	\end{equation}
	By the local stability of Cauchy transform in the bulk \cite[Theorem 2.7]{BaoES2016-jfa}, we have 
	\begin{equation}
	  \label{eqn:term2-b}
		\begin{aligned}
		&\left\vert \int_0^{1} G_{\sigma,\vert a-w\vert}(i\eta)\,d\eta
		 - \int_0^{1} G_{\Sigma,\vert A-w\vert}(i\eta)\,d\eta \right\vert\\
		&\qquad\qquad\qquad \leq C (\mathrm{d}_L(\widetilde\mu_\sigma ,\widetilde\mu_\Sigma) + \mathrm{d}_L(\widetilde\mu_{\vert a-w\vert}, \widetilde\mu_{\vert A-w\vert} )),
		\end{aligned}
	\end{equation}
	for any $w$ such that $0\in \mathcal{B}_{\mu_1\boxplus\mu_2}$, where $\mu_1=\widetilde{\mu}_{|a-w|}$ and $\mu_2=\widetilde{\mu}_{|\sigma|}=\widetilde{\mu}_{|T|}$.
	  Note that there exists $\varepsilon>0$ such that $K\subset D^{(\varepsilon)}(T,a)$. By the proof of \cite[Theorem 2.7]{BaoES2016-jfa} (see also \cite[Lemma 3.4]{BaoES2016-jfa} and \cite[Lemma 5.1]{BaoES2016-jfa}), the constant $C$ in \eqref{eqn:term2-a} and \eqref{eqn:term2-b} can be chosen independently from $w\in K$.
	\end{proof}

	\begin{lemma}
		\label{lem:unifLevy}
		Let $K\subset \C$ be a compact set. Then
		\[\mathrm{d}_L(\widetilde\mu_{\vert a-w\vert},\widetilde\mu_{\vert A-w\vert})\to 0\quad \mathrm{as}\; N\to\infty\]
		uniformly in $w\in K$.
	\end{lemma}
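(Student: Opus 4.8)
The plan is to reduce the uniform (in $w$) convergence of Lévy distances to the convergence of $\ast$-moments of $A$ to $a$, using a compactness/equicontinuity argument in the parameter $w$. First I would note that both $\mu_{\vert a-w\vert}$ and $\mu_{\vert A-w\vert}$ are the symmetrizations of the spectral measures of the Hermitian dilations $\begin{bmatrix} 0 & a-w \\ a^*-\overline w & 0\end{bmatrix}$ and $\begin{bmatrix} 0 & A-w \\ A^*-\overline w & 0\end{bmatrix}$, all of which have norm bounded by $\Vert A\Vert + \vert w\vert \leq M + \sup_{w\in K}\vert w\vert =: R_K$, uniformly in $N$ and in $w\in K$. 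Hence all the measures in play are supported in the fixed compact interval $[-R_K, R_K]$, and $\mathrm d_L$ metrizes weak convergence on this space.

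Next I would establish two pointwise facts. (i) For each fixed $w$, $\mathrm d_L(\mu_{\vert a-w\vert},\mu_{\vert A-w\vert})\to 0$ as $N\to\infty$: this follows because $A\to a$ in $\ast$-distribution implies $\tr[P(A,A^*)]\to\tau[P(a,a^*)]$ for all noncommutative polynomials $P$, and in particular the moments $\tr[((A-w)(A-w)^*)^k]=\tr[Q_{w,k}(A,A^*)]$ converge to the corresponding moments of $\vert a-w\vert^2$ for each $k$; since the measures are supported in a fixed compact set, moment convergence implies weak convergence, hence $\mathrm d_L\to 0$. (ii) The maps $w\mapsto \mu_{\vert A-w\vert}$ and $w\mapsto\mu_{\vert a-w\vert}$ are uniformly (in $N$) Lipschitz for the Lévy metric: by the standard perturbation bound for spectral distributions (e.g.\ \cite[Proposition 1.6(iii)]{Fack1982}, as already used in the proof of Theorem~\ref{thm:HlambdaCauchy}),
\[
\mathrm d_L(\mu_{\vert A-w_1\vert},\mu_{\vert A-w_2\vert})\leq \left\Vert \begin{bmatrix} 0 & A-w_1 \\ A^*-\overline{w_1} & 0\end{bmatrix} - \begin{bmatrix} 0 & A-w_2 \\ A^*-\overline{w_2} & 0\end{bmatrix}\right\Vert = \vert w_1-w_2\vert,
\]
and identically for $a$ in place of $A$ using the operator norm in $\mathcal A$. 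Therefore $w\mapsto \mathrm d_L(\mu_{\vert a-w\vert},\mu_{\vert A-w\vert})$ is $2$-Lipschitz on $K$, uniformly in $N$.

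Finally I would combine these via a standard $\varepsilon/3$ covering argument: given $\varepsilon>0$, cover the compact set $K$ by finitely many balls of radius $\varepsilon/6$ centered at points $w_1,\dots,w_m$; by (i) choose $N_0$ so large that $\mathrm d_L(\mu_{\vert a-w_j\vert},\mu_{\vert A-w_j\vert})<\varepsilon/3$ for all $j$ and all $N\geq N_0$; then for any $w\in K$, picking $w_j$ with $\vert w-w_j\vert<\varepsilon/6$ and using the uniform Lipschitz bound (ii) twice gives $\mathrm d_L(\mu_{\vert a-w\vert},\mu_{\vert A-w\vert})\leq \mathrm d_L(\mu_{\vert a-w\vert},\mu_{\vert a-w_j\vert}) + \mathrm d_L(\mu_{\vert a-w_j\vert},\mu_{\vert A-w_j\vert}) + \mathrm d_L(\mu_{\vert A-w_j\vert},\mu_{\vert A-w\vert}) < \varepsilon/3+\varepsilon/3+\varepsilon/3 = \varepsilon$ for all $N\geq N_0$. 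This proves the convergence is uniform in $w\in K$. The only mildly delicate point is justifying (i) — that $\ast$-distributional convergence of $A$ to $a$ transfers to weak convergence of the (symmetrized) singular value laws of $A-w$; but on a fixed compact support this is just the classical fact that convergence of all moments implies weak convergence, so I do not expect a genuine obstacle here.
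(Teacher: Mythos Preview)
Your argument is correct, and in fact more explicit than the paper's. One small slip: you write that $\mu_{\vert A-w\vert}$ is the symmetrization of the spectral measure of the Hermitian dilation, but it is the other way around---the spectral measure of the dilation is $\widetilde\mu_{\vert A-w\vert}$. This does not affect the proof: Fack's inequality applied to the dilation gives $\mathrm d_L(\widetilde\mu_{\vert A-w_1\vert},\widetilde\mu_{\vert A-w_2\vert})\leq\vert w_1-w_2\vert$, and passing back to the unsymmetrized measures costs at most a factor of $2$ (or one can bypass the dilation entirely and invoke Weyl's perturbation inequality for singular values, which gives $\vert s_j(A-w_1)-s_j(A-w_2)\vert\leq\vert w_1-w_2\vert$ directly).

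The paper's proof takes a different, softer route: instead of quantifying the equicontinuity, it argues by contradiction. It selects for each $N$ a maximizer $w_N=\arg\max_{w\in K}\mathrm d_L(\mu_{\vert a-w\vert},\mu_{\vert A_N-w\vert})$, assumes the maxima stay bounded below by some $\varepsilon_0>0$ along a subsequence, uses compactness of $K$ to extract a further subsequence $w_{N_k}\to w_0$, and then obtains a contradiction because $\mu_{\vert A_{N_k}-w_{N_k}\vert}\to\mu_{\vert a-w_0\vert}$ in moments (uniform boundedness plus $\ast$-moment convergence). Your $\varepsilon/3$ covering argument is more constructive and yields an explicit modulus of continuity, which could in principle be upgraded to a quantitative rate; the paper's subsequence argument is shorter but purely qualitative. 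Both are perfectly adequate for the lemma as stated.
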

\begin{proof}
	Since $w\mapsto \mathrm{d}_L(\widetilde\mu_{\vert a-w\vert},\widetilde\mu_{\vert A_N-w\vert})$ is a continuous function in $w$, we can choose $w_N = \arg\max_{w\in K} \mathrm{d}_L(\widetilde\mu_{\vert a-w\vert},\widetilde\mu_{\vert A_N-w\vert})$. We claim that
\[\mathrm{d}_L(\widetilde\mu_{\vert a-w_N\vert},\widetilde\mu_{\vert A_N-w_N\vert})\to 0.\]
Suppose that the claim is false. There is a subsequence $w_{N_k}$ of $w_N$ such that \[\mathrm{d}_L(\widetilde\mu_{\vert a-w_{N_k}\vert},\widetilde\mu_{\vert A_{N_k}-w_{N_k}\vert})\geq \varepsilon_0\] for some $\varepsilon_0>0$. By dropping to a subsequence if necessary, we may assume $w_{N_k}\to w_0$ for some $w_0\in K$. However, this contradicts that 
\[\mathrm{d}_L(\widetilde\mu_{\vert a-w_{N_k}\vert},\widetilde\mu_{\vert A_{N_k}-w_{N_k}\vert})\to 0\]
because the opereator norms of 
\[\begin{pmatrix}
	0 & A_{N_k}-w_{N_k}\\
	(A_{N_k}-w_{N_k})^* & 0
\end{pmatrix}\quad \mathrm{and}\quad \begin{pmatrix}
	0 & a-w_0\\
	(a-w_0)^* & 0
\end{pmatrix}\]
are uniformly bounded and $\widetilde\mu_{\vert A_{N_k}-w_{N_k}\vert}\to\widetilde\mu_{\vert a-w_{0}\vert}$ in moments. This proves the lemma.
\end{proof}

\begin{proof}[Proof of Theorem~\ref{thm:deformedsinglering}]
	Let $f$ be a $C_c^\infty(\C)$ smooth function and write $K$ to be the support of $f$. Let $\varepsilon>0$. We want to show that
	\begin{equation}
		\label{eq:DSRest}
		\bbP\left(\left\vert \int_K(\Delta f)(\lambda) \frac{1}{N}\Tr[\log\vert Y-\lambda\vert]\,d^2\lambda - \int_K(\Delta f)(\lambda) \tau[\log\vert y-\lambda\vert]\,d^2\lambda \right\vert >4\varepsilon\right) \to 0.
	\end{equation}

	Recall that $S(T,a)$ is a finite set. If Assumption~(\ref{StrongAssumption2}) holds, set $E = \emptyset$; otherwise, take $E\subset\Omega(T,a)^c$ to be satisfying Assumption~(\ref{StrongAssumption1}). Since $\log$-potentials are uniformly locally integrable in the sense of Lemma~\ref{lem:unif.loc.int}, there exists an open set $W_1$ such that $E\cup S(T,a)\subset W_1$ and 
	\begin{equation}
		\label{eq:W1}
		\left\vert \int_{W_1}(\Delta f)(\lambda) \frac{1}{N}\Tr[\log\vert Y-\lambda\vert]\,d^2\lambda \right\vert+\left\vert \int_{W_1}(\Delta f)(\lambda) \tau[\log\vert y-\lambda\vert]\,d^2\lambda \right\vert <\varepsilon.
	\end{equation}
	By doing a compact exhaustion, there exist compact sets $F^{(1)}\subset F^{(2)}\subset \ldots\subset \Omega(T,a)$ such that 
	\[\bigcup_{k=1}^\infty F^{(k)} = \Omega(T,a).\]
	Since $\Omega(T,a)$ is a bounded set, by Lemma~\ref{lem:unif.loc.int}, there exists $n$ such that $\Omega(T,a)\setminus F^{(n)}$ has sufficiently small Lebesgue measure and \eqref{eq:W1} holds with $\Omega(T,a)\setminus F^{(n)}$ in place of $W_1$. We decompose $(K\cap W_1^c)\cap \Omega(T,a)$ into the disjoint union $K_1\cup W_2$ where 
\[
 K_1 = K\cap W_1^c\cap F^{(n)}\quad\text{and}\quad
		W_2 = K\cap W_1^c\cap (\Omega(T,a)\setminus F^{(n)}). 
\]
	Since $W_2\subset \Omega(T,a)\setminus F^{(n)}$, \eqref{eq:W1} holds with $W_2$ in place of $W_1$ in the equation. By writing $K_2 = (K\cap W_1^c)\cap \Omega(T,a)^c$, we decompose $K$ into the disjoint union
	\begin{equation}
		\label{eq:K-decomposition}
		K = (K\cap W_1)\cup W_2\cup K_1 \cup K_2.
	\end{equation}
	By construction, \eqref{eq:W1} holds with $K\cap W_1$ and $W_2$ in place of $W_1$ in the equation. In addition, $K_1\subset \Omega(T,a)$ and $K_2\subset \Omega(T,a)^c$ are compact sets. In the following paragraphs, it remains to estimate the integral on the left-hand side of \eqref{eq:DSRest} over $K_1$ and $K_2$ instead of $K$ separately.

	{\bf {Estimate over $K_1$:}} We first look at the integral on the left-hand side of \eqref{eq:DSRest} over $K_1$. We want to show that 
	\begin{equation}
		\label{eq:K1-integral}
		\bbP\left(\left\vert\int_{K_1}(\Delta f)(\lambda) \frac{1}{N}\Tr[\log\vert Y-\lambda\vert]\,d^2\lambda - \int_{K_1}(\Delta f)(\lambda) \tau[\log\vert y-\lambda\vert]\,d^2\lambda\right\vert > \varepsilon\right)\to 0.
	\end{equation}
		To this end, we will show that
		\begin{align}
		&\int_{K_1}(\Delta f)(\lambda)\left(\frac{1}{2N}\Tr\log\vert H^\lambda\vert-\int_\R\log\vert u\vert \,d\mu_{\Sigma,\vert A-\lambda\vert}\right)d^2\lambda\prec \frac{\|\Delta f\|_{L^1(\C)}}{N},\label{eq:K1est}\\
		&\int_{K_1}(\Delta f)(\lambda)\left(\int_\R\log\vert u\vert \,d\mu_{\Sigma,\vert A-\lambda\vert}d^2\lambda - \int_\R\log\vert u\vert \,d\mu_{\sigma,\vert a-\lambda\vert}d^2\lambda \right) \to 0.\label{eq:K1estFreeConv}
	\end{align}
	We first note that the convergence \eqref{eq:K1estFreeConv} is deterministic and it follows from Lemmas~\ref{lem:DiffsigmaSigma} and~\ref{lem:unifLevy}. 

	We now prove \eqref{eq:K1est}. For any $L>0$ and $\lambda\in K_1$, write the logarithmic potentials $\frac{1}{2N}\Tr\log\vert H^\lambda\vert$ and $\int_\R\log\vert u\vert \,d\mu_{\Sigma,\vert A-\lambda\vert}$ as in \eqref{eq:IntCauchy}. Since $K_1$ is compact, the estimate \eqref{eq:LogImag} holds uniformly for all $\lambda\in K_1$. Using \eqref{eq:IntCauchy} and \eqref{eq:LogImag}, in order to prove \eqref{eq:K1est}, it suffices to show
	\begin{equation}
		\label{eq:globalsecondterm}
		\left\vert \int_{K_1}(\Delta f)(\lambda)\left(\int_0^{N^L}(G^\lambda(i\eta)-G_{\Sigma,\vert A-\lambda\vert}(i\eta))d\eta\right)d^2\lambda\right\vert \prec \frac{\|\Delta f\|_{L^1(\C)}}{N}.
	\end{equation}
	The strategy is similar to that in Theorem~\ref{thm:localconv}. We decompose the integral with respect to $\eta$ into
	\[\int_0^{N^L} = \int_0^{N^{-L_1}}+\int_{N^{-L_1}}^{N^L}.\]
	Since $K_1$ is compact, there exists $\delta>0$ such that $K_1\subset D^{(\delta)}(T,a)$. By Theorem~\ref{thm:HlambdaCauchy}, we have the stochastic domination
	\begin{align}
		&\left\vert \int_{K_1}(\Delta f)(\lambda)\left(\int_{N^{-L_1}}^{N^L}(G^\lambda(i\eta)-G_{\Sigma,\vert A-\lambda\vert}(i\eta))d\eta\right)d^2\lambda\right\vert \\
		&\prec \int_{K_1}\vert (\Delta f)(\lambda)\vert \left(\int_{N^{-L_1}}^{N^L}\frac{1}{N\eta}d\eta\right)d^2\lambda\prec\frac{\|\Delta f\|_{L^1(\C)}}{N}\label{eq:large.eta}.
	\end{align} 
	We now estimate the integral $\int_0^{N^{-L_1}}$. Lemma~\ref{lem:boundedintG} shows that we can choose $L_1$ large enough such that
	\begin{equation}
		\label{eq:free.integral.global}
		\left\vert\int_{K_1}(\Delta f)(\lambda)\left(\int_0^{N^{-L_1}} G_{\Sigma,\vert A-\lambda\vert}(i\eta)\,d\eta\right)d^2\lambda\right\vert\leq \frac{\|\Delta f\|_{L^1(\C)}}{N}.
	\end{equation}
	Meanwhile, since we assume $\Vert \Sigma^{-1}\Vert\leq N^\alpha$, we can apply Lemma~\ref{lem:singularvalueG} and use the Markov's inequality to get the approximation
	\begin{align}
		&\quad\bbP\left(\left\vert\int_{K_1}(\Delta f)(\lambda)\left(\int_0^{N^{-L_1}}G^\lambda(i\eta)\,d\eta\right)d^2\lambda\right\vert>\frac{\|\Delta f\|_{L^1(\C)}}{N}\right)\nonumber\\
		&\leq \frac{N}{\|\Delta f\|_{L^1(\C)}}\int_{K_1}\vert(\Delta f)(\lambda)\vert\cdot\E\left\vert\int_0^{N^{-L_1}}G^\lambda(i\eta)\,d\eta\right\vert d^2\lambda\nonumber\\
		&\leq N^{-cL_1/2+\widetilde{c}+1}\label{eq:Markov.global}
	\end{align}
	for some positive constants $c$ and $\widetilde{c}$. By choosing large enough $L_1$, we can then combine \eqref{eq:large.eta}, \eqref{eq:free.integral.global} and \eqref{eq:Markov.global} to prove \eqref{eq:globalsecondterm}. This proves \eqref{eq:K1est}. By combining~\eqref{eq:K1estFreeConv}, we see that~\eqref{eq:K1-integral} holds.

 {\bf {Estimate over $K_2$:}} 	We now proceed to estimate the integral on the left-hand side of \eqref{eq:DSRest} over $K_2$ in place of $K$. If Assumption~(\ref{StrongAssumption1}) or~(\ref{StrongAssumption2}) holds, without loss of generality, we assume $0<\kappa_1<1/8$. We want to show that under one of these assumptions, there exists $\kappa_3, \kappa_4>0$ such that for all $N$ large enough, we have
	\begin{equation}
		\label{eq:EHbounded}
		\vert \E G^{\lambda}(i\eta)\vert \leq \kappa_4\quad \textrm{for all $\eta>N^{-\kappa_3}$ and $\lambda\in K_2$.}
	\end{equation}
	 We first assume Assumption~(\ref{StrongAssumption1}) holds. Recall that, by our decomposition~\eqref{eq:K-decomposition} of $K$, the set $E$ in Assumption~(\ref{StrongAssumption1}) is contained in $W_1$, hence disjoint from $K_2$. In this case, by Theorem~\ref{thm:ApproxSubordination}, there are functions $\omega_A$ and $r_A$ such that
	\[\E G^{\lambda}(i\eta) = \E G_{\widetilde{\mu}_{\vert A-\lambda\vert}}(\omega_A(i\eta))+r_A(i\eta).\]
	We apply Assumption~(\ref{StrongAssumption1}) with $S=K_2$. Fix a $\kappa_3$ such that $0<\kappa_3<\kappa_1<1/8$. If $N$ is large enough, by the estimate of $r_A(i\eta)$ in Theorem~\ref{thm:ApproxSubordination} and Lemma~\ref{lem:SImag}, $\omega_A(i\eta)$ is purely imaginary with $\operatorname{Im}\omega_A(i\eta)>N^{-\kappa_1}$ for all $\eta>N^{-\kappa_3}$ and $\lambda\in K_2$; moreover, $r_A(i\eta) \to 0$ uniformly in $\eta>N^{-\kappa_3}$ and $\lambda\in K_2$. Assumption~(\ref{StrongAssumption1}) tells us that $\vert\E G_{\widetilde{\mu}_{\vert A-\lambda\vert}}(\omega_A(i\eta))\vert\leq \kappa_2$; thus, \eqref{eq:EHbounded} holds for some $\kappa_4>0$. If Assumption~(\ref{StrongAssumption2}) holds instead of Assumption~(\ref{StrongAssumption1}), by applying Theorem~\ref{thm:ApproxSubordination} that there are functions $\omega_B$ and $r_B$ such that
	\[\E G^{\lambda}(i\eta) = \E G_{\widetilde{\mu}_{\sigma}}(\omega_B(i\eta))+r_B(i\eta),\]
	Eq. \eqref{eq:EHbounded} follows from a similar argument.

	By \eqref{eq:EHbounded} and \cite[Lemma 15]{GuionnetKZ-single-ring}, for all $\lambda\in K_2$,
	\[\E\widetilde{\mu}_{\vert Y-\lambda\vert}([-x,x])\leq 2\kappa_4 \max\{x, N^{-\kappa_3}\}.\]
	For any $t>0$, apply the Cauchy--Schwarz inequality to $\int_{[N^{-\kappa_3},t]}\vert \log\vert x\vert\vert\,d\mu_{\vert Y-\lambda\vert}(x)$; by \cite[Lemma 4.1(c)]{BenaychGeorges2017}, there exists a constant $\kappa_5>0$ such that
	\begin{equation}
		\label{eq:N-kappa3t}
		\E\int_{[N^{-\kappa_3},t]}\vert \log\vert x\vert\vert\,d\mu_{\vert Y-\lambda\vert}(x) \leq \kappa_5\sqrt{t\vert\log t\vert^2+N^{-\kappa_3}\vert \log N\vert^2}.
	\end{equation}
	Furthermore, given any $L_1>\kappa_3$ which will be chosen later,
	\begin{align}
		\E\int_{[N^{-L_1},N^{-\kappa_3}]}\vert \log\vert x\vert\vert\,d\mu_{\vert Y-\lambda\vert}(x) &\leq L_1\log N \,\E[\widetilde{\mu}_{\vert Y-\lambda\vert}([-N^{-\kappa_3},N^{-\kappa_3}])] \nonumber\\
		&\leq 2\kappa_4 L_1 N^{-\kappa_3}\log N\label{eq:N-L1-kappa3}.
	\end{align}
	We now choose $L_1>0$. Denote by $s_{\mathrm{min}}^\lambda$ the least singular value of $Y-\lambda$. We compute
	\begin{align*}
		\E \int_{[0,N^{-L_1}]}\vert\log\vert x\vert \vert\,d\mu_{\vert Y-\lambda\vert}(x) &\leq \E[\vert\log s_{\mathrm{min}}^\lambda \vert\mathbbm{1}_{s_{\mathrm{min}}^\lambda\leq N^{-L_1}}]\nonumber\\
		& = \int_0^{N^{-L_1}} \bbP(s_{\mathrm{min}}^\lambda\leq t)\frac{1}{t}dt+\bbP(s_{\mathrm{min}}^\lambda\leq N^{-L_1})\vert \log N^{-L_1}\vert.
	\end{align*}
	The above estimates are valid for $\lambda\in K_2$. We then apply Theorem~\ref{thm:lsv}; there are positive constants $c$ and $c'$ such that for any $\lambda$,
	\begin{align}
		\E \int_{[0,N^{-L_1}]}\vert\log\vert x\vert \vert\,d\mu_{\vert Y-\lambda\vert}(x)&\leq \int_0^{N^{-L_1}}t^{c-1} N^{c'}\,dt+L_1 N^{c'-L_1} \log N\nonumber\\
		&=\left(\frac{1}{c}+L_1\log N\right)N^{c'-L_1}\label{eq:0N-L_1}.
	\end{align}
	Hence, we choose $L_1=\kappa_3+c'$. By~\eqref{eq:N-kappa3t}, \eqref{eq:N-L1-kappa3} and \eqref{eq:0N-L_1}, there is a constant $C>0$ such that for all $\lambda\in K_2$,
	\begin{equation}
		\label{eq:log0t}
		\E\int_{[0,t]}\vert \log\vert x\vert\vert\,d\mu_{\vert Y-\lambda\vert}(x)\leq  \kappa_5\sqrt{t\vert\log t\vert^2+N^{-\kappa_3}\vert \log N\vert^2} +C N^{-\kappa_3} \log N.
	\end{equation}
	By Lemma~\ref{lem:bounded0t}, given any $\delta>0$, we can find $t>0$ such that the integral
	\[\int_{K_2}\vert\Delta f(\lambda)\vert\int_{[0,t]}\vert \log\vert x\vert\vert\,d\mu_{\vert y-\lambda\vert}(x)d^2\lambda<\delta\]
	for all large enough $N$. This together with \eqref{eq:log0t} shows that, by Markov's inequality, for any $\delta'>0$, we can choose $t$ small enough such that
	\begin{equation}
		\label{eq:0tinProb}
		\bbP\left(\left\vert \int_{K_2}(\Delta f)(\lambda) \left(\int_{[0,t]}\log x\,d\mu_{\vert Y-\lambda\vert}(x)-  \int_{[0,t]}\log x\,d\mu_{\vert y-\lambda\vert}(x)\right)d^2\lambda \right\vert >\delta'\right) \to 0.
	\end{equation}
	Finally, recall that $\|Y\|\leq 2M$ and $K_2$ is a bounded set in $\C$. By the weak convergence of $\mu_{\vert Y-\lambda\vert}$ to $\mu_{\vert y-\lambda\vert}$ in probability and by the dominated convergence theorem, for any $t>0$ and $\delta'>0$.
	\begin{equation}
		\begin{aligned}
		\label{eq:tinftyinProb}
		&\bbP\left(\left\vert \int_{K_2}(\Delta f)(\lambda) \left(\int_{[t,\infty)}\log x\,d\mu_{\vert Y-\lambda\vert}(x)- \int_{[t,\infty)}\log x\,d\mu_{\vert y-\lambda\vert}(x)\right)d^2\lambda \right\vert >\delta'\right) \to 0.
		\end{aligned}
	\end{equation}
	Combining \eqref{eq:0tinProb} and \eqref{eq:tinftyinProb} with suitable $\delta'>0$, by triangle inequality, we see that
	\begin{equation}
		\label{eq:K2estimate}
		\bbP\left(\left\vert \int_{K_2}(\Delta f)(\lambda) \frac{1}{N}\Tr[\log\vert Y-\lambda\vert]\,d^2\lambda - \int_{K_2}(\Delta f)(\lambda) \tau[\log\vert y-\lambda\vert]\,d^2\lambda \right\vert >\varepsilon\right) \to 0.
	\end{equation}
	By our choice of $W_1$ and $W_2$, \eqref{eq:DSRest} follows from \eqref{eq:K1-integral} and \eqref{eq:K2estimate}. This also completes the proof of the theorem.
\end{proof}

\subsection{The Hermitian or unitary case}
\label{sect:hermitian-singlering}
The following thoerem shows that if $A$ is Hermitian or $A$ is unitary for all $N$, the empirical eigenvalue distribution of $Y$ in \eqref{eq:modelY} converges to the Brown measure of $y$ in \eqref{eq:limity} without additional assumptions on $A$ or $\Sigma$. By taking $A = 0$, the following theorem removes a regularity assumption of the single ring theorem by Guionnet et al. \cite{GuionnetKZ-single-ring}.

We do not need the extra assumptions on $A$ or $\Sigma$ as in Theorem~\ref{thm:deformedsinglering} because of the following property of normal matrices. If $\lambda_1,\ldots,\lambda_N$ are the eigenvalues of a normal matrix $A$, then the norm $\|(A-\lambda)^{-1}\|$ is given by 
\[\|(A-\lambda)^{-1}\| = \sup_{j}\vert \lambda_j-\lambda\vert^{-1}.\]
If $A$ is a Hermitian or unitary matrix for all $N$, then we are able to find an open set $W_1$ of arbitrarily small Lebesgue measure such that $\|(A-\lambda)^{-1}\|$ is bounded for all $\lambda\in W_1^c$ and for all $N$. Consequently, the estimate in Lemma~\ref{lem:singularvalueG} is valid for all $\lambda\in W_1^c$. We apply Lemma~\ref{lem:unif.loc.int} for the estimate of the log potentials for $\lambda\in W_1$.
\begin{theorem}
	\label{thm:SpecificA}
	Consider the random matrix model $Y$ and the operator $y\in\mathcal{A}$ as in \eqref{eq:modelY} and \eqref{eq:limity}. If $A$ is Hermitian or if $A$ is unitary for all $N$, then the empirical eigenvalue distribution of $Y$ converges weakly to the Brown measure of $y$ in probability. 
	
	More generally, if $A$ is a normal marix and there is a closed set $F$ with Lebesgue measure zero independent of $N$ such that all the eigenvalues of $A$ lie inside $F$ for all $N$, then the empirical eigenvalue distribution of $Y$ converges weakly to the Brown measure of $y$ in probability. 
\end{theorem}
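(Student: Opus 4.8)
The plan is to re-run the proof of Theorem~\ref{thm:deformedsinglering}, observing that its two extra hypotheses --- invertibility of $\Sigma$ with $\|\Sigma^{-1}\|\le N^\alpha$, and one of Assumptions~(\ref{StrongAssumption1}), (\ref{StrongAssumption2}) --- become automatically available when $A$ is normal with spectrum in a fixed Lebesgue-null set. A Hermitian $A$ has $\operatorname{spec}(A)\subset\R$ and a unitary $A$ has $\operatorname{spec}(A)\subset\{z:|z|=1\}$, both closed, null and independent of $N$, so it suffices to treat the general normal statement; fix such a closed null set $F$ with $\operatorname{spec}(A_N)\subset F$ for all $N$. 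For $\lambda\notin F$ the normal matrix $A-\lambda$ is invertible and its singular values are $\{|\mu-\lambda|:\mu\in\operatorname{spec}(A)\}$, so
\[\|(A-\lambda)^{-1}\|=\frac{1}{\operatorname{dist}(\lambda,\operatorname{spec}(A))}\le\frac{1}{\operatorname{dist}(\lambda,F)};\]
hence on every compact $S\subset F^c$ there is a constant $c_S>0$, independent of $N$, with $\operatorname{dist}(\lambda,F)\ge c_S$ for $\lambda\in S$, and in particular $\|(A-\lambda)^{-1}\|\le N^\alpha$ on $S$ once $N$ is large.

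Next, the same bound verifies Assumption~(\ref{StrongAssumption1}). For $\lambda$ in a compact $S\subset F^c$ the measure $\widetilde{\mu}_{|A-\lambda|}$ is supported in $\{x:|x|\ge c_S\}$; since $|i\eta-x|=\sqrt{\eta^2+x^2}\ge|x|\ge c_S$ there, we get $|G_{\widetilde{\mu}_{|A-\lambda|}}(i\eta)|\le 1/c_S$ for all $\eta>0$ and all $\lambda\in S$. Taking $E=F\cap\Omega^c$ --- so that $E$ is measurable, $E\subset\Omega^c$, $m(E)=0$ and $\Omega^c\cap E^c=\Omega^c\cap F^c$ --- this is precisely Assumption~(\ref{StrongAssumption1}) of Theorem~\ref{thm:deformedsinglering}, with $\kappa_1=1$ (say) and $\kappa_2=1/c_S$.

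With these observations in hand I would repeat the proof of Theorem~\ref{thm:deformedsinglering}. Fix $f\in C_c^\infty(\C)$ with support $K$. Since the logarithmic potentials of $Y$ and of $y$ are uniformly locally integrable (Lemma~\ref{lem:unif.loc.int}) and $m(F)=0$, one may enlarge the exceptional open set $W_1$ built at the start of that proof so that, besides $E\cup S(T,a)$, it contains the compact null set $F\cap K$, while the integrals of $\Delta f$ against the two logarithmic potentials over $W_1$ stay as small as desired. After this enlargement the compact pieces $K_1=K\cap W_1^c\cap F^{(n)}\subset\Omega$ and $K_2=K\cap W_1^c\cap\Omega^c$ of the decomposition~\eqref{eq:K-decomposition} are disjoint from $F$, so $\|(A-\lambda)^{-1}\|\le N^\alpha$ uniformly on $K_1\cup K_2$ for large $N$. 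The estimate over $K_1$ then goes through verbatim: the only use of $\|\Sigma^{-1}\|\le N^\alpha$ there was in applying Lemma~\ref{lem:singularvalueG}, which we instead apply with its other hypothesis $\|(A-\lambda)^{-1}\|\le N^\alpha$, while Theorem~\ref{thm:HlambdaCauchy}, Lemma~\ref{lem:boundedintG} and the deterministic convergence~\eqref{eq:K1estFreeConv} are untouched. The estimate over $K_2$ uses Assumption~(\ref{StrongAssumption1}) as just verified (with $S=K_2$) to obtain~\eqref{eq:EHbounded} via the approximate subordination functions of Theorem~\ref{thm:ApproxSubordination} and Lemma~\ref{lem:SImag}; wherever the least singular value of $Y-\lambda$ is controlled through~\eqref{eq:sminU} and Theorem~\ref{thm:lsv}, one again invokes $\|(A-\lambda)^{-1}\|\le N^\alpha$. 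The remaining steps --- Lemma~\ref{lem:bounded0t}, the splitting of $\int_{[0,t]}\vert\log\vert x\vert\vert\,d\mu_{\vert Y-\lambda\vert}(x)$ as in~\eqref{eq:N-kappa3t}--\eqref{eq:0N-L_1}, and the weak-convergence argument~\eqref{eq:tinftyinProb} --- need no change. Summing the contributions over $K\cap W_1$, $W_2$, $K_1$ and $K_2$ yields~\eqref{eq:DSRest}, which is the asserted convergence of the empirical eigenvalue distribution of $Y$ to the Brown measure of $y$ in probability.

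The point that makes this work, and the reason the hypothesis on $\Sigma$ disappears, is that the only genuine use of deterministic invertibility in Theorem~\ref{thm:deformedsinglering} is to feed Rudelson--Vershynin's bound for $s_{\min}(U\Gamma+D)$ through Theorem~\ref{thm:lsv} and~\eqref{eq:sminU}; in the normal case that invertibility is supplied by $A-\lambda$ off the null set $F$ rather than by $\Sigma$. I expect the only real care to be in the bookkeeping: verifying that the exceptional-set device of Theorem~\ref{thm:deformedsinglering} can absorb the null set $F$ (hence the hypothesis that $F$ is independent of $N$), and that normality of $A$ indeed forces the uniform bound on $G_{\widetilde{\mu}_{|A-\lambda|}}(i\eta)$ demanded by Assumption~(\ref{StrongAssumption1}).
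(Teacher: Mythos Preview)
Your proposal is correct and follows essentially the same approach as the paper: replace the exceptional null set $E$ in the proof of Theorem~\ref{thm:deformedsinglering} by the closed null set $F$ containing the spectrum of $A$, so that on the compact pieces $K_1, K_2$ one has a uniform bound $\|(A-\lambda)^{-1}\|\le C$, which both feeds Theorem~\ref{thm:lsv} via the alternative hypothesis of Lemma~\ref{lem:singularvalueG} and yields the Cauchy-transform bound needed for~\eqref{eq:EHbounded}. The paper's proof is terser---it simply says ``with $F$ in place of $E$'' and remarks that~\eqref{eq:EHbounded} holds because $K_2$ has positive distance from the eigenvalues of $A$---whereas you spell out the verification of Assumption~(\ref{StrongAssumption1}) and the bookkeeping of absorbing $F\cap\Omega$ into $W_1$, but the substance is the same.
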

\begin{proof}
	We prove the more general statement for normal matrix $A$ satisfying the hypothesis. Let $f$ be a $C_c^\infty(\C)$ smooth function and write $K$ to be the support of $f$. Let $\varepsilon>0$. We want to show~\eqref{eq:DSRest} under the hypothesis of this theorem. We decompose 
	\begin{equation}
		\label{eq:K-decomposition-Hermitian}
		K = (K\cap W_1)\cup W_2\cup K_1\cup K_2
	\end{equation}
	in the exact same way in~\eqref{eq:K-decomposition} with $F$ in place of $E$. We remark that, in contrast to the set $E$ in the proof of Theorem~\ref{thm:deformedsinglering}, this set $F$ may have nonempty intersection with $\Omega(T,a)$. The estimate~\eqref{eq:W1} holds with the $K\cap W_1$ or $W_2$ in~\eqref{eq:K-decomposition-Hermitian} in place of $W_1$. In the next paragraph, we will show that the estimates~\eqref{eq:K1-integral} and~\eqref{eq:K2estimate} hold with the $K_1$ and $K_2$ in~\eqref{eq:K-decomposition-Hermitian}.
	
	By our choice of $W_1$, the compact sets $K_1$ and $K_2$ have positive distance from $F$ which contains all the eigenvalues of $A$. For any $\lambda\in K_1\cup K_2$, the matrix $A-\lambda$ is invertible and $\Vert (A-\lambda)^{-1}\Vert\leq C$ for some constant $C>0$. In particular, we can apply Theorem~\ref{thm:lsv} to estimate the least singular value of $Y-\lambda$. The proof of the estimates~\eqref{eq:K1-integral} and~\eqref{eq:K2estimate} with the $K_1$ and $K_2$ in~\eqref{eq:K-decomposition-Hermitian} then follows from the exact same procedure in the proof of Theorem~\ref{thm:deformedsinglering}. Remark that in the process of proving~\eqref{eq:K2estimate}, \eqref{eq:EHbounded} holds with this $K_2$ because $K_2$ has a positive distance from all the eigenvalues of $A$.  
\end{proof}

\section{Example: the Jordan block matrix}
\label{sect:Jordan}
Consider a sequence of $N\times N$ Jordan block matrices 
\begin{equation}
	\label{eq:JordanAdef}
	A = \begin{pmatrix}
	0& 1& 0 &\cdots & 0 \\
	0& 0& 1 &\cdots & 0\\
	\vdots & \vdots& \vdots& \ddots &\vdots\\
	0&0&0&\cdots& 1\\
	0&0&0&\cdots &0
\end{pmatrix}.
\end{equation}
It is well-known that $A$ converges in $\ast$-distribution to a Haar unitary operator $a$ but the eigenvalue distribution converges to $\delta_0$, but not the Brown measure of the limit operator. As an application of Theorem~\ref{thm:deformedsinglering}, we will show that, however, the empirical eigenvalue distribution of the random matrix $U\Sigma V+A$ \emph{does} converge to the Brown measure of the operator of its limit in $\ast$-distribution. We will show that Assumption~(\ref{StrongAssumption1}) in Theorem~\ref{thm:deformedsinglering} holds for $A$. The main result of this section is Proposition~\ref{prop:JordanEV}. We first study the singular values of $A-\lambda$ for $\vert \lambda\vert\neq 1$.

\begin{lemma}
	\label{lem:JordanSV}
	For $\vert \lambda\vert\neq 1$, the magnitudes of the singular values of $A-\lambda$ can be described as follow.
	\begin{enumerate}
		\item If $\vert\lambda\vert<1$, all but one singular values of $A-\lambda$ are at least $1-\vert\lambda\vert$.
		\item If $\vert \lambda\vert >1$, all the singular values of $A-\lambda$ are at least $\vert\lambda\vert-1$.
	\end{enumerate}
\end{lemma}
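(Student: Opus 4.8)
The plan is to compare $A$ with the cyclic shift. Let $\widetilde A = A + e_N e_1^{*}$ be the $N\times N$ permutation matrix whose columns are $e_N, e_1, e_2, \dots, e_{N-1}$; it is unitary, hence diagonalized by a unitary matrix, and its eigenvalues are precisely the $N$-th roots of unity. Consequently the singular values of $\widetilde A - \lambda$ are exactly $\{\,\vert w-\lambda\vert : w^N=1\,\}$, and by the reverse triangle inequality each of them is at least $\bigl\vert\,1-\vert\lambda\vert\,\bigr\vert$. In particular all $N$ singular values of $\widetilde A - \lambda$ are $\geq 1-\vert\lambda\vert$ when $\vert\lambda\vert<1$, and all are $\geq \vert\lambda\vert-1$ when $\vert\lambda\vert>1$. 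The key point for what follows is that $A-\lambda = (\widetilde A-\lambda) - e_N e_1^{*}$ is a rank-one perturbation of $\widetilde A-\lambda$.

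For part~(1) I would transfer this lower bound through the standard interlacing inequality for singular values under a rank-one perturbation (a special case of Weyl's inequality $s_{i+j-1}(B+C)\leq s_i(B)+s_j(C)$). Ordering singular values decreasingly as $s_1\geq s_2\geq\cdots\geq s_N$, this gives $s_{j+1}(\widetilde A-\lambda)\leq s_j(A-\lambda)$ for $j=1,\dots,N-1$, so that for those $N-1$ indices
\[
  s_j(A-\lambda)\;\geq\; s_{j+1}(\widetilde A-\lambda)\;\geq\; 1-\vert\lambda\vert ,
\]
which is exactly the assertion that all but (at most) one singular value of $A-\lambda$ is at least $1-\vert\lambda\vert$.

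For part~(2) the interlacing argument only reaches $N-1$ of the singular values, so instead I would bound the least singular value directly via the resolvent. Since $A$ is nilpotent, $(A-\lambda)^{-1} = -\sum_{k=0}^{N-1}\lambda^{-k-1}A^{k}$ is a finite sum valid for every $\lambda\neq 0$; using $\Vert A^{k}\Vert\leq 1$ and $\vert\lambda\vert>1$,
\[
  \bigl\Vert (A-\lambda)^{-1}\bigr\Vert \;\leq\; \sum_{k=0}^{\infty}\vert\lambda\vert^{-k-1} \;=\; \frac{1}{\vert\lambda\vert-1}.
\]
Therefore $s_{\mathrm{min}}(A-\lambda) = \Vert(A-\lambda)^{-1}\Vert^{-1}\geq \vert\lambda\vert-1$, and since every singular value dominates $s_{\mathrm{min}}(A-\lambda)$, all of them are $\geq \vert\lambda\vert-1$.

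The only real subtlety is bookkeeping about which singular value is lost. When $\vert\lambda\vert<1$ the determinant $(-\lambda)^{N}$ is small, so $A-\lambda$ genuinely has one tiny singular value and no bound on $\Vert(A-\lambda)^{-1}\Vert$ is possible; this is why one must use the \emph{rank-one} singular-value interlacing there, rather than, say, the rank-two Hermitian perturbation estimate for $(A-\lambda)^{*}(A-\lambda)=(\widetilde A-\lambda)^{*}(\widetilde A-\lambda)+(\text{rank }\leq 2)$, which would cost two singular values instead of one. Conversely, for $\vert\lambda\vert>1$ there is no small singular value, so the cruder Neumann-series estimate is available and is exactly the right tool. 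Everything else — the spectrum of the cyclic shift, the reverse triangle inequality, and $\Vert A^{k}\Vert\leq 1$ — is routine.
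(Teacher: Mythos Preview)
Your proof is correct and follows essentially the same approach as the paper: both compare $A$ with the unitary cyclic shift $A+e_Ne_1^{*}$, use rank-one singular-value interlacing for $\vert\lambda\vert<1$, and the Neumann series on $(A-\lambda)^{-1}$ for $\vert\lambda\vert>1$. The only cosmetic differences are that the paper proves the interlacing via the approximation-number formula $s_n(L)=\inf\{\Vert L-X\Vert:\operatorname{rank}X<n\}$ rather than citing Weyl, and bounds the singular values of the shifted unitary by a Neumann series on $(\lambda-U)^{-1}$ instead of diagonalizing it explicitly.
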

We prove a more general result on estimating singular values. While we only state the result for matrices, the result, with the same proof, indeed holds for compact operators on a Hilbert space. For an $N\times N$ matrix $L$, we denote the singular values of $L$ in decreasing order by
\[s_1(L)\geq s_2(L)\geq\ldots\geq s_N(L).\]

\begin{lemma}
	\label{prop:finiterankpert}
	Suppose that $L$ and $U$ are $N\times N$ matrices such that $L-U$ has rank $k$. Then, for any $\lambda\in\C$,
	\[s_n(\lambda-L)\geq s_{n+k}(\lambda-U).\]
	for all $0\leq n \leq n+k\leq N$. In particular, if $U$ is unitary and $\vert\lambda\vert < 1$, \[s_1(\lambda-L)\geq \ldots\geq s_{N-k}(\lambda-L)\geq 1-\vert\lambda\vert.\]
\end{lemma}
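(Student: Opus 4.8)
The plan is to deduce the singular value inequality from the Courant--Fischer min-max characterization together with the rank hypothesis. Recall that for an $N\times N$ matrix $M$, $s_m(M) = \min_{\dim W = N-m+1}\max_{x\in W,\,\|x\|_2=1}\|Mx\|_2$, where the minimum runs over all subspaces $W$ of $\C^N$ of the indicated dimension. First I would fix $\lambda\in\C$ and write $\lambda - L = (\lambda - U) + (U - L)$, where $R\deq U - L$ has rank $k$ by hypothesis. The kernel $\ker R$ then has dimension at least $N-k$, and on $\ker R$ the two operators agree: $(\lambda-L)x = (\lambda-U)x$ for all $x\in\ker R$.

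Next I would take a subspace $W_0$ achieving (or nearly achieving) the min in the formula for $s_{n+k}(\lambda-U)$, so $\dim W_0 = N-(n+k)+1$ and $\max_{x\in W_0,\|x\|=1}\|(\lambda-U)x\|_2 = s_{n+k}(\lambda-U)$. The key step is to intersect: set $W \deq W_0\cap \ker R$. Since $\dim\ker R\geq N-k$ and $\dim W_0 = N-n-k+1$, a dimension count inside $\C^N$ gives $\dim W \geq (N-k)+(N-n-k+1) - N = N-n-(2k-1)$... — this is the wrong bound, so instead I would intersect $\ker R$ with a space of the right size for $s_n(\lambda-L)$. Concretely: to bound $s_n(\lambda-L)$ from below, I must exhibit, for \emph{every} subspace $W$ with $\dim W = N-n+1$, a unit vector $x\in W$ with $\|(\lambda-L)x\|_2\geq s_{n+k}(\lambda-U)$. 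Given such $W$, form $W\cap\ker R$, which has dimension at least $(N-n+1) + (N-k) - N = N-n-k+1$. On this intersection $\lambda - L$ and $\lambda - U$ coincide, and since $\dim(W\cap\ker R)\geq N-(n+k)+1$, the min-max formula for $\lambda - U$ forces $\max_{x\in W\cap\ker R,\|x\|=1}\|(\lambda-U)x\|_2 \geq s_{n+k}(\lambda-U)$; pick $x$ attaining this max. Then $\|(\lambda-L)x\|_2 = \|(\lambda-U)x\|_2\geq s_{n+k}(\lambda-U)$, and since $W$ was arbitrary, $s_n(\lambda-L)\geq s_{n+k}(\lambda-U)$.

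For the final assertion, take $U$ unitary, so $(\lambda-U)^*(\lambda-U) = |\lambda|^2 - \lambda \bar U^* - \bar\lambda U + I$... more simply, every singular value of $\lambda - U$ is at least $1 - |\lambda|$ when $|\lambda|<1$, because $\|(\lambda - U)x\|_2 \geq \|Ux\|_2 - |\lambda|\,\|x\|_2 = (1-|\lambda|)\|x\|_2$ for all $x$; hence $s_m(\lambda-U)\geq 1-|\lambda|$ for every $m$. Applying the displayed inequality with $n$ ranging from $1$ to $N-k$ and using $s_{n+k}(\lambda-U)\geq 1-|\lambda|$ yields $s_n(\lambda-L)\geq 1-|\lambda|$ for $1\leq n\leq N-k$. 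The only subtle point — and the step I would be most careful about — is the dimension-count for the intersection $W\cap\ker R$ inside $\C^N$, ensuring the inequality $\dim(W\cap\ker R)\geq N-(n+k)+1$ holds so that the min-max bound for $\lambda-U$ applies on the nose; everything else is a routine application of Courant--Fischer. As noted, the statement and proof transfer verbatim to compact operators on a separable Hilbert space, using the min-max characterization of singular values there.
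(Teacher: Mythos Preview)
Your proof is correct. The dimension count $\dim(W\cap\ker R)\geq (N-n+1)+(N-k)-N = N-(n+k)+1$ is exactly what is needed, and the rest is a clean application of Courant--Fischer; the reverse triangle inequality for the unitary case is also fine.

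The paper takes a different (and slightly shorter) route: instead of the min--max characterization it uses the approximation-number formula
\[
  s_n(M)=\inf\{\|M-X\|:\operatorname{rank}(X)<n\}.
\]
Writing $\lambda-L-X=(\lambda-U)-(L-U+X)$ and noting that $\operatorname{rank}(L-U+X)\leq k+(n-1)<n+k$ immediately gives $s_n(\lambda-L)\geq s_{n+k}(\lambda-U)$, with no subspace intersection needed. For the unitary bound the paper estimates $\|(\lambda-U)^{-1}\|$ by a Neumann series rather than your direct reverse triangle inequality; both yield $s_N(\lambda-U)\geq 1-|\lambda|$. Your approach has the advantage of being self-contained from Courant--Fischer; the paper's approach avoids any dimension bookkeeping and makes the subadditivity of rank do all the work. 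Either argument generalizes to compact operators without difficulty.
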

\begin{proof}
	It is well-known that (for example, see \cite[Theorem 2.1]{GohbergKreinBook}), for any $n$, the $n$th-largest singular value of any matrix $L$ can be computed as
	\[s_n(L) = \inf\{\|L-X\|:\operatorname{rank}(X)<n\}.\]
	Since $L-U$ has rank $k$, $L-U+X$ has at most rank $n+k-1$ for any $X$ with rank at most $n-1$. We then apply the above formula to $\lambda-L$ and get
	\begin{align*}
		s_n(\lambda-L)& =  \inf\{\|\lambda-L-X\|:\operatorname{rank}(X)<n\}\\
		&=\inf\{\|(\lambda-U)-(L-U+X)\|:\operatorname{rank}(X)<n\}\\
		&\geq \inf\{\|(\lambda-U)-Z\|:\operatorname{rank}(Z)<n+k\}\\
		&= s_{n+k}(\lambda-U).
	\end{align*}
	For the last assertion, since the above computation shows $s_{N-k}(\lambda-L)\geq s_{N}(\lambda-U)$, it suffices to show that $s_{N}(\lambda-U)\geq 1-\vert\lambda\vert$. But $s_N(\lambda-U) = 1/\|(\lambda-U)^{-1}\|$ and if $U$ is unitary,
	\begin{align*}
		\|(\lambda-U)^{-1}\| &= \|(\lambda U^*-1)^{-1}U^*\|\\
		&\leq \sum_{l=0}^\infty\|\lambda U^*\|^l\\
		&=\frac{1}{1-\vert\lambda\vert}.
	\end{align*}
	This completes the proof of the lemma.
\end{proof}

\begin{proof}[Proof of Lemma~\ref{lem:JordanSV}]
	Let $E_{N1}$ be the matrix that has all entries equal to $0$ except the $(N,1)$-entry, in which the entry is $1$. Then $A+E_{N1}$ is a unitary matrix; in fact, this matrix cyclically permutes the standard basis. Point (1) of the lemma  follows from applying Lemma~\ref{prop:finiterankpert} with $L=A$, $U = A+E_{N1}$ and $k=1$.

	For Point (2), we first note that $\|A\| = 1$ and assume $\vert\lambda\vert> 1$. We expand $(\lambda-A)^{-1}$ into a power series to get the norm estimate
	\begin{align*}
		\|(\lambda-A)^{-1} \|&\leq \sum_{l=0}^\infty \frac{\|A\|^l}{\vert\lambda\vert^{l+1}}=\frac{1}{\vert\lambda\vert-1}.
	\end{align*}
	Since the least singular value of $\lambda-A$ is given by $1/\|(\lambda-A)^{-1}\|$, we conclude that all the singular values of $\lambda-A$ are at least $\vert\lambda\vert - 1$.
\end{proof}

\begin{proposition}
	\label{cor:ACauchy}
	Given any compact set $K$ satisfying either $K\subset \mathbb{D}$ or $K\subset \C\setminus \overline{\mathbb{D}}$, there exists a constant $\kappa>0$ such that
	\[G_{\widetilde\mu_{\vert A-\lambda\vert}}(i\eta)\leq \kappa\]
	for all $\eta>1/N$ and $\lambda\in K$.
\end{proposition}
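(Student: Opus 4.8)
The plan is to reduce the claim to the singular-value estimates of Lemma~\ref{lem:JordanSV}, which already do the real work. Write $s_1(\lambda)\le\cdots\le s_N(\lambda)$ for the singular values of $A-\lambda$ in increasing order, so that $\widetilde\mu_{\vert A-\lambda\vert}=\frac1{2N}\sum_{j=1}^N(\delta_{s_j(\lambda)}+\delta_{-s_j(\lambda)})$. A one-line computation (as in the expansion used for symmetric measures in Section~2) gives the explicit formula
\[\bigl\vert G_{\widetilde\mu_{\vert A-\lambda\vert}}(i\eta)\bigr\vert=\frac{\eta}{N}\sum_{j=1}^N\frac{1}{\eta^2+s_j(\lambda)^2}.\]
Since $\vert G_\mu(i\eta)\vert\le 1/\eta$ for every probability measure $\mu$, the asserted bound holds trivially with $\kappa=1$ as soon as $\eta\ge 1$, so it remains only to treat $1/N<\eta\le 1$.

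Next I would split according to the hypothesis on $K$. If $K\subset\C\setminus\overline{\mathbb{D}}$, compactness produces $\delta>0$ with $\vert\lambda\vert-1\ge\delta$ on $K$, and Lemma~\ref{lem:JordanSV}(2) forces $s_j(\lambda)\ge\delta$ for all $j$; the displayed formula then yields $\vert G_{\widetilde\mu_{\vert A-\lambda\vert}}(i\eta)\vert\le\eta/\delta^2\le 1/\delta^2$ with no further work (and in fact for every $\eta>0$). If instead $K\subset\mathbb{D}$, compactness gives $\delta>0$ with $1-\vert\lambda\vert\ge\delta$ on $K$, and Lemma~\ref{lem:JordanSV}(1) guarantees that at most one singular value, say $s_1(\lambda)$, can fall below $\delta$, while $s_2(\lambda),\dots,s_N(\lambda)\ge\delta$. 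For the exceptional term I would use only the crude bound $1/(\eta^2+s_1(\lambda)^2)\le 1/\eta^2$, and for the remaining $N-1$ terms the bound $1/(\eta^2+s_j(\lambda)^2)\le 1/\delta^2$, to get
\[\bigl\vert G_{\widetilde\mu_{\vert A-\lambda\vert}}(i\eta)\bigr\vert\le\frac{\eta}{N}\cdot\frac{1}{\eta^2}+\frac{\eta}{N}\cdot\frac{N-1}{\delta^2}=\frac{1}{N\eta}+\frac{(N-1)\eta}{N\delta^2}\le 1+\frac{1}{\delta^2},\]
where $\eta>1/N$ controls the first summand and $\eta\le 1$ the second. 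Taking $\kappa=1+1/\delta^2$ then covers both cases.

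There is no serious obstacle once Lemma~\ref{lem:JordanSV} is in hand; the only point needing care is the single possibly-tiny singular value of $A-\lambda$ in the regime $\vert\lambda\vert<1$, which is handled exactly by the crude estimate $\tfrac{\eta}{N}\cdot\tfrac1{\eta^2}=\tfrac1{N\eta}$ together with the cutoff $\eta>1/N$. This is also precisely why the proposition is restricted to $\eta>1/N$ and cannot be pushed down to $\eta=0$: at $\lambda=0$ the matrix $A$ has a zero singular value, so $\vert G_{\widetilde\mu_{\vert A\vert}}(i\eta)\vert=\tfrac1{N\eta}+\tfrac{(N-1)\eta}{N(\eta^2+1)}$ is unbounded as $\eta\downarrow 0$ at fixed $N$, and more generally the least singular value of $A-\lambda$ can be far below $1/N$ for $\vert\lambda\vert$ close to (but below) $1$.
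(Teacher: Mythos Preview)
Your proof is correct and follows essentially the same approach as the paper: apply Lemma~\ref{lem:JordanSV} to bound the singular values from below, isolate the single potentially small singular value in the case $K\subset\mathbb{D}$, and control its contribution via $\tfrac{1}{N\eta}<1$. The only cosmetic difference is that the paper uses the AM--GM bound $\eta/(\eta^2+d^2)\le 1/(2d)$ for the ``good'' terms (which holds for all $\eta>0$ and thus avoids your separate treatment of $\eta\ge 1$), yielding the slightly sharper constant $1+\tfrac{1}{2d}$ in place of your $1+\tfrac{1}{\delta^2}$.
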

\begin{proof}
	We look at the case that $K\subset\mathbb{D}$. Let $d$ be defined by
	\[d = 1-\sup_{\lambda\in K}\vert \lambda\vert.\]
	Since $K\subset\mathbb{D}$ is compact, we must have $d>0$. Now, by Lemma~\ref{lem:JordanSV}, $A-\lambda$ has $N-1$ singular values that are at least $d$. Hence, for any $\lambda\in K$,
	\begin{align*}
		G_{\widetilde\mu_{\vert A-\lambda\vert}}(i\eta) &\leq \frac{1}{N}\frac{\eta}{s_N(A-\lambda)^2+\eta^2} + \frac{N-1}{N}\frac{\eta}{d^2+\eta^2}\\
		&\leq \frac{1}{N}\frac{1}{\eta}+\frac{1}{2d}\\
		&< 1+\frac{1}{2d}.
	\end{align*}
	This shows that, in this $K\subset \mathbb{D}$, $G_{\widetilde\mu_{\vert A-\lambda\vert}}(i\eta) < 1+1/2d$ for all $\eta>1/N$ and $\lambda\in K$.

	The case $K\subset\C\setminus\overline{\mathbb{D}}$ is simpler. Since all the singular values of $A-\lambda$ are at least $d'-1$, which is positive, where $d' = \inf_{\lambda\in K}\vert \lambda\vert$. This shows that, in this case,
	\[G_{\widetilde\mu_{\vert A-\lambda\vert}}(i\eta)\leq \frac{1}{2(d'-1)}\]
	for all $\eta>0$ and $\lambda\in K$.
\end{proof}

\begin{proposition}
	\label{prop:JordanEV}
	Consider the random matrix model $Y$ as in \eqref{eq:modelY} and the operator $y\in\mathcal{A}$ as in \eqref{eq:limity} with $A$ defined in \eqref{eq:JordanAdef}. Suppose that $\Sigma$ is invertible and $\Vert \Sigma^{-1}\Vert\leq N^\alpha$ for some $\alpha>0$ independent of $N$. Then the limiting eigenvalue distribution of $Y$ is the Brown measure of $y$.
\end{proposition}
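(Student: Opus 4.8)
The plan is to obtain Proposition~\ref{prop:JordanEV} as an immediate consequence of Theorem~\ref{thm:deformedsinglering}: the hypotheses on $\Sigma$ (invertible with $\|\Sigma^{-1}\|\le N^\alpha$) are exactly the ones assumed, so all that remains is to verify Assumption~(\ref{StrongAssumption1}) for the Jordan block $A$ of \eqref{eq:JordanAdef}. Recall that, as noted at the beginning of this section, $A$ converges in $\ast$-distribution to a Haar unitary $a$, so the set $\Omega=\Omega(T,a)$ appearing in Theorem~\ref{thm:deformedsinglering} is the relevant one.

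The key point is that the only place where the singular-value estimate of Lemma~\ref{lem:JordanSV}, hence the Cauchy-transform bound of Proposition~\ref{cor:ACauchy}, degenerates is on the unit circle, which is Lebesgue-null. Accordingly I would take
\[E=\Omega^c\cap\{\lambda\in\C:|\lambda|=1\},\]
which is contained in $\Omega^c$ and has Lebesgue measure zero, being a subset of the unit circle. Let $S\subset\Omega^c\cap E^c$ be compact. Since $S\subset\Omega^c$ while $\Omega^c\cap\{|\lambda|=1\}=E$ is disjoint from $S$, the set $S$ does not meet the unit circle; consequently $S$ is the disjoint union of the two sets $S\cap\mathbb{D}$ and $S\cap(\C\setminus\overline{\mathbb{D}})$, each of which is closed and bounded (because $S$ avoids $\partial\mathbb{D}$, we have $S\cap\mathbb{D}=S\cap\overline{\mathbb{D}}$, etc.), hence compact. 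Applying Proposition~\ref{cor:ACauchy} separately to these two pieces and taking the larger of the two resulting constants produces $\kappa>0$ with $\left|G_{\widetilde\mu_{|A-\lambda|}}(i\eta)\right|\le\kappa$ for all $\eta>1/N$ and all $\lambda\in S$. This is exactly Assumption~(\ref{StrongAssumption1}) with $\kappa_1=1$ and $\kappa_2=\kappa$, so Theorem~\ref{thm:deformedsinglering} applies and yields the claimed convergence of the empirical eigenvalue distribution of $Y$ to the Brown measure of $y$.

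I do not expect a genuine obstacle at this stage: the substantive work has already been done in Lemma~\ref{lem:JordanSV} and Proposition~\ref{cor:ACauchy}, which exploit that $A$ is a rank-one perturbation of the cyclic-permutation unitary to keep all but one singular value of $A-\lambda$ bounded below by $1-|\lambda|$ (for $|\lambda|<1$) or $|\lambda|-1$ (for $|\lambda|>1$), uniformly on compact subsets of $\mathbb{D}$ or of $\C\setminus\overline{\mathbb{D}}$. The only delicate feature is that this lower bound collapses as $|\lambda|\to1$, and this is precisely the reason the auxiliary null set $E$ is permitted in Assumption~(\ref{StrongAssumption1}); since $E$ lies inside a circle it is automatically Lebesgue-null, so the hypothesis is satisfied and the proof reduces to the bookkeeping described above.
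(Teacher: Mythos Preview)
Your proof is correct and follows essentially the same approach as the paper: both verify Assumption~(\ref{StrongAssumption1}) of Theorem~\ref{thm:deformedsinglering} by invoking Proposition~\ref{cor:ACauchy}, taking the exceptional null set $E$ to be (the relevant part of) the unit circle. Your version is slightly more explicit in splitting a compact $S\subset\Omega^c\setminus E$ into its pieces in $\mathbb{D}$ and $\C\setminus\overline{\mathbb{D}}$ before applying Proposition~\ref{cor:ACauchy}, and in taking $E=\Omega^c\cap\partial\mathbb{D}$ rather than the whole circle, but these are minor bookkeeping differences.
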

We run computer simulations with $N=2000$. In the simulation, $\Sigma$ is a deterministic $N\times N$ diagonal matrix such that half of the diagonal entries are $1$ and half of the diagonal entries are $N^{-5}$. Figure~\ref{fig:compareUnitary} shows computer simuations of the eigenvalues of $U\Sigma V+A$ and $U\Sigma V+W$, where $W$ is a Haar-distributed unitary random matrix independent of $U$ and $V$. Both $A$ and $W$ converge in $\ast$-distribution to a Haar unitary operator. We can see that the eigenvalues distributions of the two simulations are about the same.
\begin{proof}
	Recall that the matrix $A$ converges to a Haar unitary operator $a$ in $\ast$-distribution as $N\to\infty$. By Proposition~\ref{cor:ACauchy}, Assumption~(\ref{StrongAssumption1}) in Theorem~\ref{thm:deformedsinglering} holds if we take $E = \mathbb{T}$. The conclusion of this proposition then follows from Theorem~\ref{thm:deformedsinglering}.
\end{proof}
\begin{figure}
	\includegraphics[width = 0.48\textwidth]{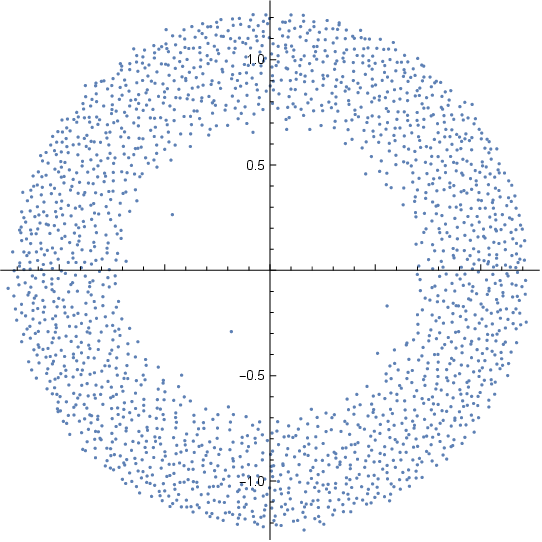}
	\includegraphics[width = 0.48\textwidth]{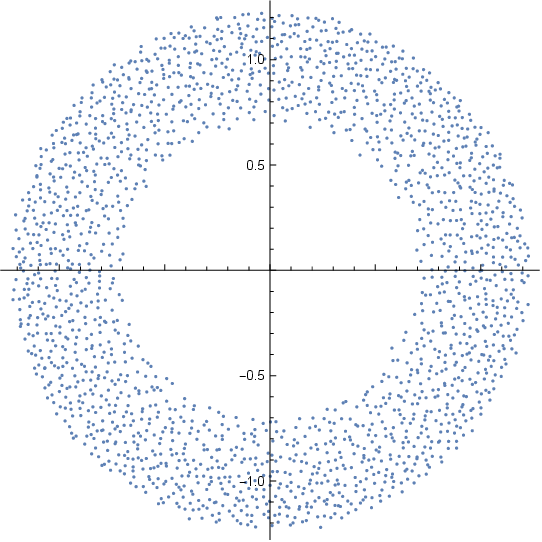}
	\caption{Eigenvalue simulations of $U\Sigma V+A$ (left) and $U\Sigma V+W$ (right) where $A$ is the Jordan block matrix, $\Sigma$ is a deterministic diagonal matrix such that $\mu_{\Sigma} \approx (1/2)(\delta_0+\delta_1)$, and $W$ is a Haar-distributed unitary matrix independent of $U$ and $V$ \label{fig:compareUnitary}}
\end{figure}

\subsection*{Acknowledgments}
The authors thank Hari Bercovici for useful conversations. We particularly thank him for showing us the proof of Lemma~\ref{prop:finiterankpert}. The first author is partially supported by the MoST grant 111-2115-M-001-011-MY3. The second author is supported in part by Collaboration Grants for Mathematicians from Simons Foundation, NSF grant LEAPS-MPS-2316836,
and NSF CAREER Award DMS-2339565.

\bibliographystyle{acm}
\bibliography{BrownMeasure}

\end{document}